\theoremstyle{plain}
\newtheorem{theorem}{Theorem}
\numberwithin{theorem}{section}
\newtheorem*{namedtheorem}{Theorem}
\theoremstyle{plain}
\newtheorem{remark}{Remark}
\numberwithin{remark}{section}
\theoremstyle{plain}
\newtheorem{lemma}{Lemma}
\numberwithin{lemma}{section}
\theoremstyle{plain}
\newtheorem{proposition}{Proposition}
\numberwithin{proposition}{section}
\theoremstyle{plain}
\newtheorem{corollary}{Corollary}
\numberwithin{corollary}{section}
\numberwithin{equation}{section}
\title{Scaling limits in dependent random environments: relating a random walk, a branching process and a spatial branching process}
\author{Douglas Buchanan}
\date{December 2025}
\begin{document}

\maketitle

\begin{abstract}
    We extend existing connections between random walks, branching processes and spatial branching processes, and their respective scaling limits, to include processes in dependent random environments. More specifically, we prove new scaling limits of a random walk in a dependent random environment, an associated branching process in a dependent random environment, and a spatial branching process in a dependent random environment. We show that the scaling limits are related in ways reminiscent of existing results in fixed environments. A Ray-Knight Theorem relates the scaling limits of the random walk in the random environment and the branching process in the random environment. The Brownian snake relates the scaling limit of the spatial branching process in the random environment to the scaling limit of the random walk in the random environment.
\end{abstract}

\section{Introduction}
\subsection{Branching processes in random environments and a Ray-Knight theorem}
The existence of a critical branching process inside a positive excursion of the random walk was first pointed out in [12]. We recall that if $S$ is the nearest neighbour random walk on $\mathbb{Z}$, and we define
\begin{equation}
    M(i)=\sum_{j=0}^{T^{0}_1-1}1_{|S_j|=i,|S_{j+1}|=i+1},
\end{equation}
where $T^0_1:=\inf\{n\geq 1 : S_n=0\}$ is the first return time to 0, then $M$ is a critical branching process with offspring distribution Geom($\frac{1}{2}$).

This connection between random walks and branching processes is robust in the sense that it extends to random walks in random environments (RWRE) and branching processes in random environments (BPRE). This paper is concerned with exploiting this connection to prove new scaling limits of RWRE and BPRE, and to show the respective scaling limits are related. We shall also show that this same connection allows for a scaling limit of a spatial branching process in a random environment, and that the scaling limit of the spatial branching process is also related to the scaling limit of the RWRE via a Brownian snake which we will define. We first desribe the connection between the scaling limits in a fixed environment.

If we let 
\[
M^n(t)=\frac{M(\lfloor nt \rfloor)}{n},\; t\geq0, \; M^n(0):=1,
\]
where $M$ is defined in (1.1), then it is well-known that
\[
\{M^n(t)\; ; \; t\geq0\} \Rightarrow \{\eta(t)\; ; \; t\geq0\}
\]
in $D[0,\infty)$, the Skorohod space of cadlag functions from $[0,\infty)$ to $\mathbb{R}$, where the process $\eta$ is the unique solution to
\[
d\eta(t)=\sqrt{2\eta}dB(t), \; \eta(0)=1,
\]
and where $B$ is a standard Brownian motion.

Of course, $S$ also scales to Brownian motion, and the Ray-Knight Theorem then makes precise in what way the process $\eta$ is related to Brownian motion.
\begin{namedtheorem}[Classical Ray-Knight Theorem]
    \[
    \{\eta(t)\; ; \; t\geq0\} \stackrel{d}{=}\{l(x,\psi)\; ; \; x\geq 0\},
    \]
    where $l(x,s)$ is the local time of Brownian motion at level $x$ by time $s$, and $\psi$ is the first time the local time of Brownian motion at level 0 is greater than 1.
\end{namedtheorem}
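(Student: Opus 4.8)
The plan is to recognise that $\eta$ is the \emph{Feller branching diffusion} --- equivalently, the continuous-state branching process with branching mechanism $\lambda \mapsto \lambda^2$ --- for which a direct computation gives the one-dimensional Laplace transform $\mathbb{E}[\exp(-\lambda \eta(t))] = \exp(-\lambda/(1+\lambda t))$ when $\eta(0)=1$. I would then prove the identity in law by showing that the space-indexed local-time process $x \mapsto l(x,\psi)$ has exactly these finite-dimensional distributions, using \emph{Itô's excursion theory}. Write $\tau_u := \inf\{t : l(0,t) > u\}$ for the inverse local time of $B$ at $0$, so that $\psi = \tau_1$, and recall that the excursions of $B$ away from $0$, indexed by the local time $u$ at which they occur, form a Poisson point process on $[0,\infty) \times \mathcal{E}$ with intensity $du\, n(de)$, where $n$ is the Itô excursion measure. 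Trotter's joint continuity theorem for Brownian local time guarantees that $x \mapsto l(x,\psi)$ is continuous and that $l(0,\psi) = 1$, matching the continuity and initial value of $\eta$.

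For fixed $x>0$ only the excursions that climb above level $x$ contribute to $l(x,\psi)$, and additivity of local time over disjoint excursions gives $l(x,\psi) = \sum_{u \le 1} l^x(e_u)$, where $l^x(e)$ is the local time at $x$ of the excursion $e$. Since the driving local time runs over $[0,1]$, the exponential formula for Poisson point processes yields
\[
\mathbb{E}\!\left[\exp\big(-\lambda\, l(x,\psi)\big)\right] = \exp\!\left(-\int n(de)\big(1 - e^{-\lambda l^x(e)}\big)\right).
\]
The remaining ingredient is the excursion-measure computation of the right-hand side. Applying the strong Markov property under $n$ at the first time an excursion reaches $x$, after this time the excursion evolves as a Brownian motion from $x$ run until it returns to $0$, whose local time at $x$ is exponentially distributed; with the normalisation of local time adopted here this gives $n(\{e \text{ reaches } x\}) = 1/x$ and exponential rate $1/x$, so that $\int n(de)(1 - e^{-\lambda l^x(e)}) = \lambda/(1+\lambda x)$. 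Hence $\mathbb{E}[\exp(-\lambda l(x,\psi))] = \exp(-\lambda/(1+\lambda x))$, which is exactly the marginal law of $\eta(x)$.

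To pass from equality of one-dimensional marginals to equality in law of the whole process I would establish the \emph{spatial Markov}, indeed \emph{branching}, property of $x \mapsto l(x,\psi)$. The point is that, conditionally on $l(x,\psi) = z$, the excursions of $B$ above level $x$ are themselves governed by a Poisson point process whose total driving local time at level $x$ equals $z$; running the same excursion analysis one level up shows that $(l(x+y,\psi))_{y\ge 0}$, given $l(x,\psi)=z$, has Laplace functional $\exp(-z\, u_y(\lambda))$ with $\partial_y u_y = -u_y^2$ and $u_0 = \lambda$. This is precisely the transition mechanism of the continuous-state branching process solving $d\eta = \sqrt{2\eta}\,dB$, so the finite-dimensional distributions of $x \mapsto l(x,\psi)$ coincide with those of $\eta$, and joint continuity upgrades this to equality in $D[0,\infty)$ (in fact in $C[0,\infty)$).

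I expect the main obstacle to be this last step: making the nested spatial excursion decomposition rigorous and verifying the branching property, together with pinning down the local-time normalisation so that the constants produce the factor $\sqrt{2}$ in $d\eta = \sqrt{2\eta}\,dB$ rather than the factor $2$ of a standard squared Bessel process of dimension $0$. An attractive alternative, more in keeping with the methods of this paper, is to prove the identity by discrete approximation: the crossing-count process of the simple random walk run until its local time at $0$ exceeds a threshold is an exact critical Galton--Watson process with $\mathrm{Geom}(1/2)$ offspring (this is the content of (1.1)), so a discrete Ray--Knight identity holds exactly for each $n$; passing to the limit using $M^n \Rightarrow \eta$ on one side and convergence of the rescaled random-walk local time to Brownian local time on the other then recovers the stated theorem.
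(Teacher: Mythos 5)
Your proposal addresses a statement the paper never actually proves: the Classical Ray--Knight Theorem appears as a named background theorem, cited as classical (the paper only says that [12] provides ``a form of intuition'' via the branching process (1.1)), and it is then \emph{used} as an input in the proof of Proposition 2.3. So there is no internal proof to compare against, and any proof must come from outside the paper, as yours does. Your excursion-theoretic argument is the standard proof of the second Ray--Knight theorem (see, e.g., [11]): the one-dimensional marginal via the exponential formula for the Poisson point process of excursions, the process-level statement via the spatial branching property of the excursions above a level, and the upgrade from finite-dimensional distributions to equality in law by joint continuity. All three steps are sound, and your identification of the transition mechanism $\partial_y u_y=-u_y^2$, $u_0=\lambda$, with the diffusion $d\eta=\sqrt{2\eta}\,dB$ is correct. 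Your alternative route --- the exact discrete Ray--Knight identity for the simple walk, $M^n\Rightarrow\eta$, and convergence of rescaled crossing counts to Brownian local time --- is precisely the [12]-style argument the paper gestures at, and it is the version that generalises to the random-environment setting (it is, in effect, what Section 2 of the paper carries out there).

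One point you flag in passing deserves emphasis, because it is the difference between a true and a false statement rather than a bookkeeping choice. With the occupation-density (Tanaka) normalisation of local time, the second Ray--Knight theorem produces the squared Bessel process of dimension zero, $d\eta=2\sqrt{\eta}\,dB$, $\eta(0)=1$, which is \emph{not} the process in the statement. The statement as written is correct only if $l$ denotes the upcrossing local time, i.e.\ one half of the occupation density --- the normalisation forced by the discrete picture, since $M(i)$ in (1.1) counts upcrossings. It is also the normalisation under which your two excursion computations, $n\left(\{e\ \text{reaches}\ x\}\right)=1/x$ and exponential rate $1/x$ for the excursion local time at $x$, are simultaneously valid and combine to give $\lambda/(1+\lambda x)$; under the Tanaka normalisation both constants would instead be $1/(2x)$, yielding the squared Bessel marginal. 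You should state this convention explicitly at the outset; with it in place, your proof is complete in outline, the only remaining work being the routine justification of the nested excursion decomposition you already identify as the technical heart.
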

We note that $\psi$ is defined by
\begin{equation}
\psi =\inf\{s:l(0,s)>1\}.
\end{equation}
It was noted in [12] that the existence of $M$ defined in (1.1) provides a form of intuition for this Ray-Knight Theorem. Indeed, we can interpret $M(i)$ in (1.1) as defining a discrete local time for the random walk $S$ at level $i$ by time $T^0_1$, where $T^0_1$ can then also be interpreted as the first time that the local time of $S$ at level 0 is greater than 1. The Ray-Knight theorem then states that this identity persists into the respective scaling limits of $M$ and $S$.

We describe how this connection between random walks and branching processes extends to the case of random environments.

Define $(\alpha_i)_{i\in\mathbb{Z}}$ to be a sequence of i.i.d. random variables such that
\begin{equation}
\begin{aligned}
    &1) \; \exists v\in (0,\frac{1}{2}) \text{ such that for all } i, \alpha_i\in (v,1-v), \\
    &2)\; \mathbb{E}\left[\ln\left(\frac{1-\alpha_i}{\alpha_i}\right)\right]=0, \\
    &3) \; \mathbb{E}\left[\left(\ln\left(\frac{1-\alpha_i}{\alpha_i}\right)\right)^2\right]=\sigma^2<\infty.
\end{aligned}
\end{equation}

The sequence $(\alpha_i)_{i\in \mathbb{Z}}$ can be considered to be a random environment for a random walk $\widetilde{S}$, where
\begin{equation}
    \mathbb{P}(\widetilde{S}_{j+1}=i+1|\widetilde{S}_j=i,\alpha_i)=\alpha_i.
\end{equation}
We define a rescaling of the environment $(\alpha^{(n)}_i)_{i \in \mathbb{Z}}$ such that
\begin{equation}
    \ln\left(\frac{1-\alpha^{(n)}_i}{\alpha^{(n)}_i}\right)\stackrel{d}{=}\frac{1}{\sqrt{n}}\ln\left(\frac{1-\alpha_i}{\alpha_i}\right),
\end{equation}
so that
\begin{equation}
    \left\{\sum_{i=1}^{\lfloor nx \rfloor}\ln\left(\frac{1-\alpha^{(n)}_i}{\alpha^{(n)}_i}\right)\; ; \; x \in \mathbb{R}\right\} \Rightarrow \{W(x)\; ; \; x \in \mathbb{R}\},
\end{equation}
in $D(-\infty,\infty)$, where $W$ is a two sided $\sigma$-Brownian motion.

Let $\widetilde{S}^n$ now be the nearest neighbour random walk in the random environment $(\alpha^{(n)}_i)_{i\in \mathbb{Z}}$, defined exactly as in (1.4). Then 
\begin{equation}
\widetilde{M}^n(i)=\sum_{j=0}^{\widetilde{T}^{n,0}_1-1}1_{|\widetilde{S}^n_j|=i,|\widetilde{S}^n_{j+1}|=i+1}
\end{equation}
for
\begin{equation}
    \widetilde{T}^{n,0}_1:=\inf\{j\geq1:\widetilde{S}^n_j=0\},
\end{equation}
defines a sequence of branching processes in a random environment, where at generation $i$, the offspring distribution of $\widetilde{M}^n$ is Geom($1-\alpha^{(n)}_i$). 

It was shown in [5] that $\widetilde{S}^n$ satisfies
\begin{equation}
    \left\{\frac{1}{n}\widetilde{S}^n_{\lfloor n^2t \rfloor}\; ; \; t\geq0 \right\} \Rightarrow \{Y(t)\; ; \; t\geq0\}
\end{equation}
in $D[0,\infty)$, where $Y$ is Brownian motion in the random potential $W$. $Y$ is known as the Brox diffusion. We can define the Brox diffusion via speed and scale as follows:
\begin{equation}
    \begin{aligned}
        Y(t):=A_W^{-1}(B(M_W^{-1}(t))), \quad \text{where } \\
        A_W(y):=\int_0^ye^{W(x)}dx, \quad \text{and } \\
        M_W(t):=\int_0^t\text{exp}(-2W(A_W^{-1}(B(s))))ds, \quad t \geq 0 \\
    \end{aligned}
\end{equation}
where $B$ is a standard one-dimensional Brownian motion independent of $W$. We shall return to this definition in (1.19).

It was shown in [2] that if $\widetilde{M}^n(0)=1$ for all $n\geq1$, where $\widetilde{M}^n$ is defined in (1.7), then there exists a process $\widetilde{\eta}$ such that
\begin{equation}
    \left\{\frac{\widetilde{M}^n(\lfloor nt \rfloor)}{n}\; ; \; t\geq 0\right\} \Rightarrow \{\widetilde{\eta}(t)\; ; \; t\geq 0\}
\end{equation}
in $D[0,\infty)$.

The Ray-Knight Theorem in this set up, which was proved in [2], is then\newline
\begin{namedtheorem}[Ray-Knight Theorem in a random environment]
    \[
    \{\widetilde{\eta}(t)\; ; \; t\geq 0\} \stackrel{d}{=}\{L_Y(x,M_W(\psi))\; ; \; x\geq 0\},
    \]
    where $L_Y(x,s)$ is the local time of $Y$, the Brox diffusion, at level $x$ by time $s$, $M_W$ is defined in (1.10) and $\psi$ is defined in (1.2).
\end{namedtheorem}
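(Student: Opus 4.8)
The plan is to pass to the limit in the exact pathwise identity that already defines $\widetilde{M}^n$, and then to invoke uniqueness of weak limits together with the convergence (1.11). The key observation is that (1.7) is not merely the definition of a BPRE: it is a Ray-Knight identity at the discrete level. Indeed $\widetilde{M}^n(i)$ is exactly the number of up-crossings of $|\widetilde{S}^n|$ from level $i$ to level $i+1$ before the walk is stopped, so it is a version of the discrete local time field of $|\widetilde{S}^n|$ at level $i$, run up to $\widetilde{T}^{n,0}_1$. Hence $\frac{1}{n}\widetilde{M}^n(\lfloor nx\rfloor)$ is a rescaled local time field, indexed by the rescaled spatial variable $x=i/n$. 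The whole theorem then reduces to showing that this rescaled up-crossing field converges weakly, as a process in $x$, to $\{L_Y(x,M_W(\psi))\,;\,x\geq 0\}$; combining this with (1.11), which identifies the same limit as $\widetilde{\eta}$, and using uniqueness of weak limits, yields $\widetilde{\eta}\stackrel{d}{=}L_Y(\cdot,M_W(\psi))$.

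Since $\widetilde{S}^n$ already converges to the Brox diffusion $Y$ by (1.9), the natural target of the rescaled up-crossing field is the local time $L_Y$ of $Y$ itself, and it remains to pin down the time argument via the stopping rule. The walk is run until $\widetilde{T}^{n,0}_1$, the first time the local time of $\widetilde{S}^n$ at $0$ reaches the threshold that makes $\frac{1}{n}\widetilde{M}^n(0)\to 1$ (the continuous analogue of the initial condition $\widetilde{\eta}(0)=1$). Under the scale change $A_W$ and the speed-measure time change $M_W$ of (1.10) this threshold maps to ``the local time of the driving Brownian motion $B$ at $0$ first exceeds $1$'', which is exactly $\psi$ of (1.2); because $W(0)=0$ and hence $A_W'(0)=1$, the normalising factor at level $0$ is unity, so the corresponding instant in the intrinsic clock of $Y$ is precisely $M_W(\psi)$ and $L_Y(0,M_W(\psi))=1$.

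An equivalent and in some ways cleaner identification avoids handling the local-time convergence pathwise. Because $\widetilde{M}^n$ is a BPRE with offspring law Geom$(1-\alpha^{(n)}_i)$ at generation $i$, its scaling limit $\widetilde{\eta}$ is a continuous-state branching process in the Brownian environment $W$, and one expects it to be the unique solution of a one-dimensional SDE of the form $d\widetilde{\eta}(x)=\sqrt{2\widetilde{\eta}(x)}\,d\beta(x)+\widetilde{\eta}(x)\,dW(x)$, $\widetilde{\eta}(0)=1$, where $\beta$ is a Brownian motion independent of $W$ and $x$ plays the role of time (the signs and constants in the environmental term must be matched to the conventions of (1.3)--(1.6)). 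On the other side, the Ray-Knight description of the local time field of the diffusion $Y$, whose generator carries the drift $-\tfrac{1}{2}W'$, shows that $x\mapsto L_Y(x,M_W(\psi))$ solves the same SDE with the same initial condition. Conditioning on $W$ and applying a Yamada--Watanabe argument (the diffusion coefficient $\sqrt{2\,\cdot}$ being H\"older-$\tfrac{1}{2}$) gives pathwise uniqueness, hence equality in law. I would use the reduction via (1.7) and (1.11) as the backbone and then invoke whichever of the two identifications -- direct local-time convergence or SDE-matching -- proves more tractable in the random environment.

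The main obstacle is the convergence of the local-time (up-crossing) field itself. Local time is not a continuous functional of the trajectory in the Skorokhod topology, so (1.9) does not by itself transfer to local times; moreover the Brox diffusion is strongly non-ergodic and localises, so quenched control is delicate. I expect the real work to be tightness of the rescaled up-crossing fields in the spatial variable together with identification of every subsequential limit -- either through the occupation-time formula, controlling $\int f(\widetilde{S}^n)$ jointly with the field and its limit $\int f(Y)$, or by reducing, via the discrete scale function (the partial sums $\sum_{k\leq i}\ln((1-\alpha^{(n)}_k)/\alpha^{(n)}_k)$, which converge to $W$ by (1.6)), to the robust fixed-environment convergence of up-crossings to Brownian local time and then reinstating the exponential weighting coming from the Jacobian of $A_W$. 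Making the normalising constants match exactly, so that the limit is $L_Y$ with time argument $M_W(\psi)$ and not a deterministic multiple thereof, is the delicate book-keeping that the Geom$(1-\alpha)$ offspring law and the $1/n$ scalings in (1.7) and (1.11) are designed to render equal to unity.
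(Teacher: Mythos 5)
Your overall framing is sound---by (1.11) it suffices to identify the law of the limit of the rescaled up-crossing field with that of $L_Y(\cdot,M_W(\psi))$, and the up-crossing interpretation of $\widetilde{M}^n$ is indeed the discrete Ray-Knight identity---but neither of your two routes actually performs this identification, and that is where the theorem lives. Route 1 (prove directly that the rescaled up-crossing field converges to $L_Y(\cdot,M_W(\psi))$, then invoke uniqueness of weak limits) reduces the theorem to a statement that is at least as hard as the theorem itself; you correctly observe that local time is not a continuous functional in the Skorokhod topology and that this convergence is ``the real work,'' but you leave that work undone, so at this point the proposal is a restatement of the problem rather than a proof. Route 2 hinges on the assertion that ``the Ray-Knight description of the local time field of the diffusion $Y$\dots shows that $x\mapsto L_Y(x,M_W(\psi))$ solves the same SDE.'' A Ray-Knight description of the local time field of the Brox diffusion is precisely what is being proved, so as written this step is circular. (The SDE you propose is also not quite right: with the conventions of (1.3)--(1.6) the environmental term must enter as $-\widetilde{\eta}\,dW$ together with an It\^{o} correction $\tfrac{\sigma^2}{2}\widetilde{\eta}\,dt$, i.e.\ Stratonovich $-\widetilde{\eta}\circ dW$; this is a substantive term, not a matter of sign conventions.)

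The missing idea, which the paper's proof (Propositions 2.2 and 2.3, following [2]) turns on, is that the identification can be carried out entirely at the level of the limiting objects, where everything is explicit, so that no discrete-to-continuum convergence of local times is ever needed. First, the limit $\widetilde{\eta}$ in (1.11) is not an abstract weak limit: Kurtz's diffusion-approximation theorem for branching processes in random environments gives it explicitly as $\widetilde{\eta}(x)=e^{-W(x)}\eta(\tau^{-1}(x))$, where $\eta$ is a Feller diffusion ($d\eta=\sqrt{\eta}\,dB$, $\eta(0)=1$) independent of $W$, and $\tau^{-1}(x)=2\int_0^x e^{W(s)}ds=2A_W(x)$. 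Second, conditional on $W$, the occupation-time formula applied to the speed-and-scale representation $Y=A_W^{-1}(B(M_W^{-1}(\cdot)))$ yields the pathwise identity $L_Y(x,t)=e^{-W(x)}\,l(A_W(x),M_W^{-1}(t))$, hence $L_Y(x,M_W(\psi))=e^{-W(x)}\,l(A_W(x),\psi)$. The classical Ray-Knight theorem identifies $\{l(x,\psi);x\geq0\}$ in law with $\{\eta(2x);x\geq0\}$, and the identity $\tau^{-1}=2A_W$ then matches the two expressions term by term. Your instinct about ``reinstating the exponential weighting coming from the Jacobian of $A_W$'' is exactly this computation, but it must be implemented in the continuum, on the limits, rather than as a limit theorem for discrete local time fields---a statement the paper never proves and does not need.
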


The first main result of this paper extends this Ray-Knight Theorem to a much wider class of processes, where the environment $(\alpha_i)_{i\in \mathbb{Z}}$ is allowed to exhibit very strong correlation.

\subsection{Our model and Theorem 1.1}
We define the model we will be considering in this paper. Let $(\beta_i)_{i\in \mathbb{Z}}$ be a sequence of correlated random variables such that
\begin{equation}
\begin{aligned}
    &1) \; \exists v\in (0,\frac{1}{2}) \text{ such that for all } i, \beta_i\in (v,1-v), \\
    &2)\; \mathbb{E}\left[\ln\left(\frac{1-\beta_i}{\beta_i}\right)\right]=0, \\
\end{aligned}
\end{equation}
and such that there exists a deterministic sequence $(D_n)_{n\geq1}$ such that
\begin{equation}
    \left\{\frac{1}{D_n}\sum_{i=1}^{\lfloor nx \rfloor}\ln\left(\frac{1-\beta_i}{\beta_i}\right)\; ; \; x \in \mathbb{R}\right\} \Rightarrow \{W(x)\; ; \; x \in \mathbb{R}\}
\end{equation}
in $D(-\infty,\infty)$, for some conservative process $W$. A conservative process is defined in (1.20). For now we simply note that this is a relatively mild assumption, as any L\'{e}vy process is conservative, and most Gaussian processes are conservative.

\begin{remark}
    The assumption (1.13) is the key assumption that allows for all three main results in this paper to go through. We shall place this assumption in context in Section 1.5.
\end{remark}

As in Section 1.1, define $(\beta^{(n)}_i)_{i\in \mathbb{Z}}$ such that
\begin{equation}
    \ln\left(\frac{1-\beta^{(n)}_i}{\beta^{(n)}_i}\right)=\frac{1}{D_n}\ln\left(\frac{1-\beta_i}{\beta_i}\right).
\end{equation}

We may now define $\widetilde{S}^n$ to be the random walk in the random environment $(\beta^{(n)}_i)_{i \in \mathbb{Z}}$, so that
\begin{equation}
    \mathbb{P}(\widetilde{S}^n_{j+1}=i+1|\widetilde{S}^n_j=i,\beta^{(n)}_i)=\beta^{(n)}_i.
\end{equation}
In view of the definition of the Brownian snake in Section 1.4, and with an abuse of notation, we let
\begin{equation}
    \widetilde{S}^n_t=\frac{1}{n}\widetilde{S}^n_{\lfloor n^2t \rfloor}, \; t\geq0.
\end{equation}
We then let $\widetilde{M}^n$ be the rescaled branching process under the first $n$ excursions of $\widetilde{S}^n$, so that
\begin{equation}
    \widetilde{M}^n(i/n)=\frac{1}{n}\sum_{j=0}^{\widetilde{T}^{n,0}_n-1}1_{|S^n_{j/n^2}|=i/n,|S^n_{(j+1)/n^2}|=(i+1)/n},
\end{equation}
where $\widetilde{T}^{n,0}_n$ is the $n$th return time to 0 of $\widetilde{S}^n$. We note that $\widetilde{M}^n$ is then a branching process with initial mass 1, and branching times separated by $\frac{1}{n}$. We extend $\widetilde{M}^n$ to be defined for all $t$ by 
\begin{equation}
    \widetilde{M}^n(t)=\widetilde{M}^n\left(\frac{\lfloor nt \rfloor}{n}\right), \; t\geq0.
\end{equation}
In order to state our first main result we need to define Brownian motion in a random potential $Z$.

Given any real-valued process $Z=\{Z(x)\; ; \; x\in \mathbb{R}\}$ (which we take to be a random potential across space), we can define a continuous real-valued process $\widetilde{Z}$ moving through the potential $Z$, the $Z$-associated process, as follows:
\begin{equation}
    \begin{aligned}
        \widetilde{Z}(t):=A_Z^{-1}(B(M_Z^{-1}(t))), \quad \text{where } \\
        A_Z(y):=\int_0^ye^{Z(x)}dx, \quad \text{and } \\
        M_Z(t):=\int_0^t\text{exp}(-2Z(A_Z^{-1}(B(s))))ds, \quad t \geq 0 \\
    \end{aligned}
\end{equation}
where $B$ is a standard one-dimensional Brownian motion. We call the function $A_Z$ the scale function for the potential $Z$, or the scale function defined by the potential $Z$. Similarly, we call $dM_Z$ the speed measure for the potential $Z$, or the speed measure defined by the potential $Z$.\footnote{The form of the scale function and speed measure for the potential $Z$ follows from considering a formal solution to the SDE $d\widetilde{Z}_t=dB_t -\frac{1}{2}Z'(\widetilde{Z})dt$.} The process $\widetilde{Z}$ is then the process associated with the potential $Z$, or the $Z$-associated process.

We define a conservative potential to be a process $Z$ such that 
\begin{equation}
    B(M_Z^{-1}(t)) \in \text{ran}(A_Z), \quad \forall t\geq0 \text{ a.s.},
\end{equation}
so that the $Z$-associated process, $\widetilde{Z}$, exists as a random element of $C[0,\infty)$.

The following is the first main result of this paper, and is a version of the classical Ray-Knight theorem for our model.
\begin{theorem}
    The following two convergences hold, for $\widetilde{S}^n$ defined in (1.16) and $\widetilde{M}^n$ defined in (1.18):
    \[
    \left\{\widetilde{S}^n_t\; ; \; t\geq0 \right\} \Rightarrow \{Y(t)~\; ; \; t\geq 0\},
    \]
    and
    \[
    \left\{\widetilde{M}^n(t)\; ; \; t\geq0 \right\} \Rightarrow \{H(t)\; ; \; t\geq0\},
    \]
    in $D[0,\infty)$, where $Y$ is the $W$-associated process for $W$ defined in (1.13), and where
    \[
    \{H(t)\; ; \; t\geq0\}\stackrel{d}{=}\{L_Y(x,M_W(\psi))\; ; \; x\geq 0\},
    \]
    where $L_Y(x,t)$ is the local time of $Y$ at level $x$ by time $t$, $dM_W$ is the speed measure for the potential $W$, and $\psi$ is as defined in (1.2).
\end{theorem}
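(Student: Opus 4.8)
The plan is to reduce everything to the almost-sure convergence of the rescaled environment furnished by (1.13), and then to run the classical Ray--Knight machinery quenched, i.e.\ conditionally on the environment. First I would invoke the Skorokhod representation theorem to place all the objects on a common probability space so that the partial sums $W^n(x):=\frac{1}{D_n}\sum_{i=1}^{\lfloor nx\rfloor}\ln\bigl(\frac{1-\beta_i}{\beta_i}\bigr)$ converge to $W(x)$ almost surely in $D(-\infty,\infty)$. The virtue of this reduction is that the correlated structure of $(\beta_i)_{i\in\mathbb{Z}}$ never enters the argument again: by (1.14) the quenched potential of $\widetilde{S}^n$ at level $\lfloor nx\rfloor$ is exactly $W^n(x)$, so every downstream object depends on the environment only through the trajectory of $W^n$, precisely as in the i.i.d.\ treatment of [5] and [2], but now with the limiting potential being the general conservative process $W$ rather than a two-sided $\sigma$-Brownian motion.

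For the first convergence, I would use that, conditionally on the environment, $\widetilde{S}^n$ is a nearest-neighbour (birth--death) chain, hence a reversible Markov chain admitting an explicit scale function $A^n$ and speed measure $m^n$ built from the exponentiated partial sums $e^{W^n}$ and $e^{-W^n}$. Under the a.s.\ convergence $W^n\to W$, the spatially rescaled data satisfy $A^n\to A_W$ and $m^n\Rightarrow dM_W$ in the sense required by Stone's theorem on the convergence of one-dimensional (generalized) diffusions through their scale and speed data; this yields $\widetilde{S}^n_t\Rightarrow Y(t)$, the $W$-associated process of (1.19). The conservativeness assumption (1.20) is exactly what guarantees that $B(M_W^{-1}(t))\in\mathrm{ran}(A_W)$ for all $t$, so that $Y$ does not explode and the limit lives in $C[0,\infty)\subset D[0,\infty)$.

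For the second convergence, the starting point is the pathwise (combinatorial) Ray--Knight identity already used in (1.1) and (1.7): for any nearest-neighbour walk, $\widetilde{M}^n(i/n)$ equals the rescaled number of up-crossings of the interval $[i/n,(i+1)/n]$ before the $n$th return to $0$, and this up-crossing count is a discrete local time of $\widetilde{S}^n$ at level $i/n$. Being purely path-wise, this identity transfers verbatim to the correlated model. Invoking the convergence of discrete local times of generalized diffusions under the scale/speed convergence of the previous step, jointly with $\widetilde{S}^n\Rightarrow Y$ (which is automatic since $\widetilde{M}^n$ is a functional of $\widetilde{S}^n$), gives $\widetilde{M}^n(t)\Rightarrow L_Y(t,\tau)$ for a limiting stopping time $\tau$, where the level variable $t$ plays the role of $x$. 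To identify $\tau$ I would track the normalization $\widetilde{M}^n(0)=1$: the $n$th return to $0$ is the time at which the rescaled local time of $\widetilde{S}^n$ at level $0$ first exceeds $1$, and in the limit this becomes $\inf\{s:L_Y(0,s)>1\}$. Translating this threshold through the speed--scale time change of (1.19) and (1.10), where the clock of $Y$ is obtained from that of the driving Brownian motion $B$ via $M_W$, identifies $\tau=M_W(\psi)$ with $\psi$ as in (1.2), giving $H(t)\stackrel{d}{=}L_Y(t,M_W(\psi))$.

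The main obstacle I anticipate is the convergence of the discrete local times to the continuous local time $L_Y$ of a diffusion in a potentially rough random potential, together with the clean identification of the limiting stopping time $\tau$. In the i.i.d.\ case the potential is Brownian and the required joint continuity and functional convergence of local times are classical; for a general conservative $W$ one must show that $L_Y$ exists and is jointly continuous enough that $t\mapsto L_Y(t,M_W(\psi))$ is a genuine $D[0,\infty)$-limit, and that the boundary behaviour at level $0$ and at the stopping time does not spoil the convergence. Establishing tightness of $\{\widetilde{M}^n\}$ and controlling the excursions of $\widetilde{S}^n$ near $0$ uniformly in $n$ are the delicate points, and this is where the conservativeness hypothesis (1.20) and careful estimates on the scale and speed data will be doing the real work.
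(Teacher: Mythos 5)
Your treatment of the first convergence is essentially the paper's: after a Skorokhod representation making $W^n\to W$ almost sure, one works quenched and passes the convergence of scale/speed data to convergence of the associated processes (the paper does this by importing the machinery of [13], which is in the same Stone tradition you invoke; the only point you gloss is the passage from the discrete-time walk to the continuous-time diffusion, i.e.\ replacing the random exit-time clock by its mean, which is where most of the work in [13] lies). The problem is your second convergence.

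There is a genuine gap there: you assert that $\widetilde{M}^n\Rightarrow L_Y(\cdot,\tau)$ follows from $\widetilde{S}^n\Rightarrow Y$ because ``$\widetilde{M}^n$ is a functional of $\widetilde{S}^n$'', supplemented by ``convergence of discrete local times of generalized diffusions under scale/speed convergence.'' But local time (equivalently, the rescaled up-crossing counts defining $\widetilde{M}^n$) is \emph{not} a continuous functional of the path in the Skorohod topology, so weak convergence of the walks gives nothing about the up-crossing counts by itself; and there is no off-the-shelf theorem giving discrete local-time convergence for walks in a general correlated potential --- note that the limiting $W$ here need not be Markov or even a semimartingale (the paper's Section 5 example is fractional Brownian motion), so the classical results you allude to for the i.i.d./Brownian-potential case do not apply. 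You flag this as ``the main obstacle'' but offer no way past it, and this obstacle is precisely the theorem's content. The paper's proof is structured to avoid it entirely: the convergence of $\{\widetilde{M}^n\}$ is proved \emph{independently} of the walk, by verifying the three conditions of Kurtz's diffusion-approximation theorem for branching processes in random environments ([3], Theorem 2.13) --- the conditional means give the potential $-W$ via (1.13), the conditional variances give the deterministic clock $V(t)\to 2t$, and a conditional third-moment bound is checked --- yielding the explicit limit $H(t)=e^{-W(t)}\eta(\tau^{-1}(t))$ with $\eta$ a Feller diffusion independent of $(W,\tau)$. Only \emph{then} is $H$ identified with $L_Y(\cdot,M_W(\psi))$, by a computation at the level of the limits: the occupation-time formula gives $L_Y(x,t)=e^{-W(x)}l(A_W(x),M_W^{-1}(t))$, the classical Ray--Knight theorem handles $l(\cdot,\psi)$, and the key identity $\tau^{-1}(x)=2A_W(x)$ ties the branching clock to the scale function. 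In other words, the logical flow is from the branching limit to the local time, not --- as in your proposal --- from the walk's local times to the branching limit; reversing it, as you propose, is exactly the step nobody knows how to do directly in this generality.
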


\subsection{Spatial branching processes in a random environment and Theorem 1.2}
The second two main results of this paper concern the scaling limit of the corresponding spatial branching process in our correlated random environment. Theorem 1.2 will show the existence of such a scaling limit, and Theorem 1.3 will make precise the relation between this scaling limit and the scaling limits in Theorem~1.1. We briefly describe a scaling limit of a spatial branching process in a fixed environment.

At time $t=0$ there are $n$ particles located at the origin in $\mathbb{R}^d$. These particles undergo independent Brownian motions until time $t=\frac{1}{n}$, at which point each particle gives birth to a random number of offspring (possibly 0) according to a given offspring distribution. Each newly born particle undergoes an independent Brownian motion, started from the location of their ancestor's death, for $t \in [\frac{1}{n}, \frac{2}{n})$. At time $t=\frac{2}{n}$ each particle again produces a random number of offspring according to the same offspring distribution. This pattern of splitting and spatial spreading continues over time.

We may define a measure-valued process by
\begin{equation}
X_t^n(1_A) = \frac{\#\{\text{particles in } A \text{ at time } t\}}{n}, \quad  A\subset \mathbb{R}^d,
\end{equation}
where $\mu(f)$ denotes the integral of a function $f$ with respect to a measure $\mu$. We call the process $X^n$ a branching Brownian motion

Then, under suitable conditions on the offspring distribution, (mean 1 and finite variance suffice),
\[
X^n \Rightarrow X
\]
in a suitable Skorohod space, where $X$ is known as a superprocess.

Spatial branching processes in random environments and their scaling limits are defined analagously, but where the offspring distribution is allowed to be random. We define the spatial branching process of interest for our model.

Fix $K>0$. At time $t=0$ there are $n$ particles located at the origin in $\mathbb{R}^d$. A particle alive at time $t=\frac{i}{n}\in[0,K)$ produces a random number of offspring according to a geometric distribution with parameter $1-\beta_i^{(n)}$, for $\beta^{(n)}_i$ defined in (1.14). In other words, if $N^n_i$ denotes the number of offspring of a particle reproducing at time $t=\frac{i}{n}\in[0,K)$, then
\begin{equation}
\mathbb{P}(N^n_i = k |  \beta_i^{(n)}) = (\beta_i^{(n)})^k(1-\beta_i^{(n)}) .  
\end{equation}
We require $X^n_K:=0$, and take the spatial motion to be independent of the environment $(\beta^{(n)}_i)_{i\geq1}$. Formally, the measure-valued process $X^n$ for our model is defined by
\begin{equation}
X_t^n(1_A) = \frac{\#\{\text{particles in } A \text{ at time } t\}}{n}, \quad t\in[0,K), \; A\subset \mathbb{R}^d,
\end{equation}
with $X^n_K:=0$, and branching mechanism given by (1.22). We call this process branching Brownian motion in a random environment (BBMRE). The reason for truncating the process $X^n$ at time $K$ is motivated by the Brownian snake representation of the process $X^n$, which is given in Section 1.4. 
\begin{remark}
    The total mass process for our model, $X^n(1_{\mathbb{R}^d})$, is the branching process $\widetilde{M}^n$ defined in (1.18), truncated at time $K$. This provides a connection between the process $X^n$ and the random walk $\widetilde{S}^n$ defined in (1.16). We shall exploit this connection when defining the Brownian snake in Section 1.4.
\end{remark}

We make the following extra assumptions:
\begin{equation}
    \begin{aligned}
        &1) \text{ The sequence $(\beta_i)_{i\geq 1}$ is positively correlated.} \\
        &2)\;  D_n\sim Var\left(\sum_{i=1}^n\ln\left(\frac{1-\beta_i}{\beta_i}\right)\right), \text{ for $D_n$ defined in (1.13).} \\
        &3) \; \exists j \in \mathbb{N}:\frac{1}{D_n^k}\mathbb{E}\left[\left|\sum_{i=1}^{n} \ln\!\left(\frac{1-\beta_i}{\beta_i}\right)\right|^k\right] \to 0 \text{ as } n \to \infty \text{ for all $k \geq j$}. \\
        &4) \; \text{The process $W$ defined in (1.13) is continuous},
    \end{aligned}
\end{equation} 
where $\sim$ denotes asymptotic equivalence. 

\begin{remark}
    Without loss of generality, and in the pursuit of notational elegance, we shall take $j=3$ in assumption 3) in (1.24) for the rest of this paper.
\end{remark}

We require some notation. Let $\mathcal{M}(\mathbb{R}^d)$ denote the space of finite, non-negative measures on $\mathbb{R}^d$, equipped with the weak topology. For a locally compact Polish space $E$, we let $D_E[0,\infty)$ (resp. $C_E[0,\infty)$) denote the space of c{\`a}dl{\`a}g (resp. continuous) functions from $[0,\infty)$ to $E$ endowed with the Skorohod (resp. uniform convergence on compacts) topology. We let $\mathcal{C}_b^2(E)$ denote the space of twice continuously differentiable and bounded real valued functions on $E$, and $\mathcal{B}(E)$ denote the set of bounded and measurable functions on $E$.

The following is the second main result of this paper.
\begin{theorem}
Under conditions (1.24), the sequence $\{X^n\}_{n\geq1}$ defined in (1.23) is tight in $D_{\mathcal{M}(\mathbb{R}^d)}[0,K]$. Any limit point $X$ is continuous on $[0,K)$, vanishes at $K$, and satisfies the following martingale problem:
\[
\forall \phi \in \mathcal{C}_b^2(\mathbb{R}^d),\; \forall t\in[0,K)\: : \:
X_t(\phi) - X_0(\phi) - \frac{1}{2}\int_0^tX_s(\Delta\phi)ds - \gamma_t(\phi)
\]
is a continuous martingale, with quadratic variation process
\[
2\int_0^tX_s(\phi^2)ds,
\]
where $\{\gamma_t(\phi);t \geq 0\}$ is the weak limit of the sequence
\[
\left \{\sum_{i=1}^{{\lfloor{nt}\rfloor}}X^n_{i/n}(\phi)\left (\frac{\beta_i^{(n)}}{1-\beta_i^{(n)}}-1\right)\; ; \; t \geq 0 \right\}_{n\geq1}.
\]
\end{theorem}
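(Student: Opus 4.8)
The plan is to realise $X^n_t(\phi)$ as a discrete-time semimartingale for each fixed $\phi\in\mathcal{C}_b^2(\mathbb{R}^d)$, prove tightness of the family $\{X^n\}$, and identify every subsequential limit through the stated martingale problem. Letting $\mathcal{F}^n_{i/n}$ be the filtration generated by the particle positions and the environment up to time $i/n$, I would decompose the increment $X^n_{(i+1)/n}(\phi)-X^n_{i/n}(\phi)$ into a spatial part (each particle takes an independent Brownian step over time $1/n$) and a branching part (each particle at $x$ is replaced by $N^n_i$ offspring at $x$), and take conditional expectations. Using $\mathbb{E}[N^n_i\mid\beta_i^{(n)}]=\beta_i^{(n)}/(1-\beta_i^{(n)})$, this yields
\[
X^n_t(\phi) = X^n_0(\phi) + \tfrac12\int_0^t X^n_s(\Delta\phi)\,ds + \gamma^n_t(\phi) + M^n_t(\phi) + R^n_t(\phi),
\]
where $\gamma^n_t(\phi)=\sum_{i=1}^{\lfloor nt\rfloor}\big(\tfrac{\beta_i^{(n)}}{1-\beta_i^{(n)}}-1\big)X^n_{i/n}(\phi)$ is the predictable drift generated by the random mean offspring number, $M^n(\phi)$ is a martingale, and $R^n(\phi)$ collects the $o(1)$ errors from the Brownian-step expansion. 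Since the geometric law with parameter $1-\beta$ has variance $\beta/(1-\beta)^2\to2$ as $\beta\to\tfrac12$, and $\beta_i^{(n)}\to\tfrac12$ uniformly by (1.14), the predictable quadratic variation of $M^n(\phi)$ computes to
\[
\sum_{i=1}^{\lfloor nt\rfloor} \frac{\beta_i^{(n)}}{(1-\beta_i^{(n)})^2}\,\frac1n\, X^n_{i/n}(\phi^2) = 2\int_0^t X^n_s(\phi^2)\,ds + o(1),
\]
the spatial fluctuations contributing only an $O(1/n)$ amount; this is the target quadratic variation.

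For tightness I would first establish tightness of the real-valued processes $\{X^n_\cdot(\phi)\}_n$ in $D_{\mathbb{R}}[0,K]$ for each $\phi\in\mathcal{C}_b^2$ by the Aldous--Rebolledo criterion, which requires uniform stopping-time control of the increments of the finite-variation part $\tfrac12\int X^n(\Delta\phi)+\gamma^n(\phi)$ and of the predictable quadratic variation; both follow from moment bounds on the total mass $X^n_\cdot(1_{\mathbb{R}^d})=\widetilde{M}^n$ together with (1.24). I would then supply the compact containment condition, bounding $\sup_{t\le K}X^n_t(\{|x|>R\})$ uniformly in $n$ via the independence of the spatial motion from the environment and a maximal inequality for the particle Brownian motions, and lift the one-dimensional tightness to tightness in $D_{\mathcal{M}(\mathbb{R}^d)}[0,K]$ by Jakubowski's criterion, the class $\{\phi\}$ being separating. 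Because each branching event perturbs $X^n(\phi)$ by $(N^n_i-1)\phi/n$ and the maximal such jump vanishes in probability (finite offspring variance, mass normalised by $n$), any limit point is continuous on $[0,K)$; the prescribed killing $X^n_K:=0$ passes to the limit to give $X_K=0$, consistent with the total mass being the truncation at $K$ of the process $H$ of Theorem~1.1.

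The crux, and the main obstacle, is the convergence of the environmental drift $\gamma^n(\phi)$ in the dependent environment, since $\gamma^n$ behaves like a discrete stochastic integral of $X^n(\phi)$ against the environment partial sums $\tfrac{1}{D_n}\sum_{i\le\lfloor n\cdot\rfloor}\ln\tfrac{1-\beta_i}{\beta_i}$, which are correlated and fail to be a martingale, and whose limit $W$ has in general infinite variation. Writing $\tfrac{\beta_i^{(n)}}{1-\beta_i^{(n)}}-1=\exp\!\big(-\tfrac1{D_n}\ln\tfrac{1-\beta_i}{\beta_i}\big)-1$ and Taylor expanding, the leading term is $-\sum_i\tfrac1{D_n}\ln\tfrac{1-\beta_i}{\beta_i}\,X^n_{i/n}(\phi)$, whose integrator converges by (1.13), while the moment condition (1.24)~3) forces $\tfrac1{D_n^k}\mathbb{E}\big|\sum_{i\le n}\ln\tfrac{1-\beta_i}{\beta_i}\big|^k\to0$ for $k\ge3$ and thereby kills the super-quadratic Taylor remainder. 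The delicate point is the modulus of continuity of $\gamma^n$: because the limiting integrator is rough, one cannot bound increments in total variation, and I would instead apply summation by parts to reduce the oscillation of $\gamma^n$ over a short interval to that of the environment sums together with the (tight) fluctuations of $X^n(\phi)$, the continuity of $W$ from (1.24)~4) ensuring these oscillations are uniformly small. Joint tightness of $(X^n,\gamma^n(\phi))$ then follows, and along any convergent subsequence the pair converges jointly, the second coordinate being by definition the weak limit $\gamma_\cdot(\phi)$; the positive-correlation assumption (1.24)~1) is used to secure convergence of the covariance structure of $\gamma^n(\phi)$ and so to pin down this limit.

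Finally, passing to the limit in the discrete decomposition, the functional martingale central limit theorem (convergence of a uniformly integrable sequence of martingales with converging predictable quadratic variations and vanishing maximal jumps to a continuous martingale carrying the limiting quadratic variation) identifies
\[
X_t(\phi) - X_0(\phi) - \tfrac12\int_0^t X_s(\Delta\phi)\,ds - \gamma_t(\phi)
\]
as a continuous martingale with quadratic variation $2\int_0^t X_s(\phi^2)\,ds$, which completes the proof.
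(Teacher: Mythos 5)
Your core strategy is the same as the paper's: a semimartingale decomposition of $X^n_t(\phi)$ into initial mass, Laplacian drift, an environmental drift matching $\gamma^n$, and martingale noise; identification of the predictable quadratic variation as $2\int_0^t X^n_s(\phi^2)ds+o(1)$ via the geometric variance $\beta/(1-\beta)^2\to 2$; term-by-term tightness driven by control of the total mass (the paper's (4.7), coming from Proposition 2.2); and a martingale convergence argument at the end (the paper's Lemmas 4.6, 4.8 and Corollary 4.3). Two of your steps do take genuinely different routes. First, you work under a quenched filtration that carries the environment, so the compensator of the branching is exactly $\gamma^n$ and you have only two martingales; the paper uses the filtration (4.2), in which the environment is revealed only through the observed branching, splits the noise into three martingales $Z^n,N^n,M^n$ plus a drift $A^n$, and must prove separately (Lemma 4.4) that the environment-fluctuation martingale $N^n$ vanishes. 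Your choice is legitimate, matches the statement of $\gamma$ directly, and is arguably cleaner. Second, to pass from tightness of the real projections to tightness in $D_{\mathcal{M}(\mathbb{R}^d)}[0,K]$ you propose compact containment plus Jakubowski's criterion, whereas the paper compactifies to $\dot{\mathbb{R}}^d$ (making measure-valued tightness follow from the projections alone) and then invokes the snake representation of Theorem 1.3 in Corollary 4.2 to show no mass charges $\{\infty\}$; your route is more classical and self-contained, but the maximal inequality behind compact containment is real work that must be written out, while the paper's route is exactly why its proof of Theorem 1.2 leans on Theorem 1.3.

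There is, however, one concrete gap: your Taylor bookkeeping for the drift omits the quadratic term, which is precisely where the dependence assumptions enter. Writing $\frac{\beta_i^{(n)}}{1-\beta_i^{(n)}}-1=\exp\bigl(-\tfrac{1}{D_n}\ln\tfrac{1-\beta_i}{\beta_i}\bigr)-1=-\tfrac{1}{D_n}\ln\tfrac{1-\beta_i}{\beta_i}+\tfrac{1}{2D_n^2}\bigl(\ln\tfrac{1-\beta_i}{\beta_i}\bigr)^2+O(D_n^{-3})$, you handle the first-order term (via (1.13)) and the cubic-and-higher remainder (via (1.24) 3)), but the quadratic term fits neither: summed over $i\le nt$ it is nonnegative of order $n/D_n^2$ in expectation, and assumption (1.24) 3) cannot kill it because that assumption controls moments of $\sum_i\ln(\cdot)$, not of $\sum_i(\ln(\cdot))^2$. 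It vanishes only because positive correlation (assumption 1) of (1.24)), combined with 2) and Lamperti's theorem, forces $D_n^2\sim n^{2H}L(n)$ with $H>\tfrac12$ (the paper's (3.1)--(3.2)), so that $n/D_n^2\to0$; the paper uses exactly this in Lemmas 4.4 and 4.5. This is not a formality: in the i.i.d. case $D_n^2\sim n$ this very term survives and produces the extra drift $\tfrac12\bar{g}$ in the martingale problem (1.34) of [8]. Your proposal instead assigns (1.24) 1) the role of ``securing the covariance structure of $\gamma^n$,'' which is a misattribution; as written, your expansion leaves an unaccounted drift of constant order.

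A smaller point: your summation-by-parts plan for the modulus of continuity of $\gamma^n$ needs one more ingredient to close. After Abel summation the boundary term is indeed controlled by the oscillation of the environment sums times the (bounded-in-probability) mass, but the remaining sum $\sum_i E_i\,\Delta X^n_{i/n}(\phi)$ cannot be bounded through total variation of $X^n(\phi)$, which blows up with $n$; one should split $\Delta X^n(\phi)$ into its drift and martingale increments and bound the resulting martingale transform by its quadratic variation, $\sup_i|E_i|^2\langle\cdot\rangle$, using that $E$ is predictable in the quenched filtration. With that completion your argument is in fact more careful than the paper's own Lemma 4.5, whose displayed bound pulls $\sup_r X^n_r(1)$ outside a signed sum.
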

\begin{remark}
    A characterisation of the limit $X$ will follow from Theorem 1.3, presented at the end of Section 1.4.
\end{remark}

\begin{remark}
    In certain cases, the process $\{\gamma_t(\phi);t\geq0\}$ will be a stochastic integral process. We return to this point at the end of Section 4.
\end{remark}
\subsection{The Brownian snake and Theorem 1.3}
The existence of the branching process $\widetilde{M}^n$ inside the excursions of $\widetilde{S}^n$ noted in (1.17) allows us to recover the process $X^n$ defined in (1.23) from $\widetilde{S}^n$. It is the Brownian snake defined below that allows for this construction. Theorem~1.3 will show this connection between $X^n$ and $\widetilde{S}^n$ persists into their respective scaling limits.

We refer to [6] for a thorough introduction to the Brownian snake, to Chapter 3 in [1] for a more accessible introduction, and to Section 1.2 in [8] for an introduction to the method of discrete approximations to the continuous time Brownian snake. It is this last method which we employ here. The notation we define below draws heavily on that in Section 1.2 of [8].

Recall that $\widetilde{S}^n$ (defined in (1.16)) is the Donsker-rescaled nearest neighbour random walk in the random environment $(\beta^{(n)}_i)_{i \in \mathbb{Z}}$.

Define the function $f:D[0,\infty) \to D[0,\infty)$ by 
\begin{equation}
f(x)(t)=\begin{cases}
    |x(t)| & \text{if } -K\leq x(t) \leq K \\
    K-(|x(t)|-K) & \text{else}
\end{cases}
\end{equation}
where $K$ is the constant in the definition (1.23). 

We then define 
\begin{equation}
    {S}_t^n=f(\widetilde{S}^n)(t),
\end{equation} 
the Donsker-rescaled random walk in random environment $(\beta_i^{(n)})_{i \in \mathbb{Z}}$, reflected at 0 and $K$.

Let $\mathcal{W} \subset C_{\mathbb{R}^d}[0,\infty) \times \mathbb{R}_+$ denote the set of all stopped paths. By a stopped path we mean a pair $w=(\mathbf{w},\zeta) \in C_{\mathbb{R}^d}[0,\infty) \times \mathbb{R}_+$ such that $\mathbf{w}(t)=\mathbf{w}(\zeta)$ for all $t \geq \zeta$. The point $\zeta$ is often called the lifetime of the stopped path $w$ and will be denoted $\zeta_w$. The terminal point of $w$ is defined to be $w(\zeta_w)$, and is denoted $\widehat{w}$. 

We equip $\mathcal{W}$ with the distance 
\[
d(w,w')=\sup_{t \geq 0}|\mathbf{w}(t)-\mathbf{w}'(t)| + |\zeta_w-\zeta_{w'}|
\]
It follows that $(\mathcal{W},d)$ is a Polish space (see [6] for a more general statement).

We shall use the notion of a stopped path to define the spatial motion undergone by a single lineage of the process $X^n$ defined in (1.23), up until the time given by the lifetime of the stopped path.

The next definition defines the Brownian snake for our model. Let $B_1,B_2,...$ be a collection of independent Brownian motions in $\mathbb{R}^d$, stopped at time $t=\frac{1}{n}$, independent of ${S}^n$ and starting at the origin in $\mathbb{R}^d$. Define $\mathbf{W}^n_0 = 0\in\mathbb{R}^d$ and $(\mathbf{W}^n_{jn^{-2}})_{j\geq0}$ to be the path-valued process such that
\begin{equation}
\mathbf{W}^n_{(j+1)/{n^2}}(t)=
\begin{cases}
\mathbf{W}^n_{{j}/{n^2}}\left(t \wedge ({S}^n_{{j}/{n^2}} - \frac{1}{n})\right), & \text{if } {S}^n_{(j+1)/n^2} - {S}^n_{j/n^2}=-\frac{1}{n} \\
\mathbf{W}^n_{{j}/{n^2}} \odot B_j(t), & \text{if } {S}^n_{(j+1)/n^2} - {S}^n_{j/n^2}=\frac{1}{n}
\end{cases}
\end{equation}
where $B_j\odot B_k$ denotes the concatenation of two paths, defined in the obvious way. 

We then define $\mathbf{W}^n_{s}:=\mathbf{W}^n_{{\lfloor sn^2 \rfloor }/{n^2}}$, for $s\geq 0$.

We can now define the discrete Brownian snake to be the random element of $D_{\mathcal{W}}[0,\infty)$, with path as defined in (1.27) and lifetime process ${S}^n$. That is,
\[
\mathbb{W}^n_t:=(\mathbf{W}^n_t,{S}^n_t).
\]
We provide some intuition behind this construction. We start with the trivial path $\mathbf{W}^n_0=0\in\mathbb{R}^d$. Suppose we have defined the path $\mathbf{W}^n_{jn^{-2}}$. If $({S}^n_{(j+1)n^{-2}} - {S}^n_{jn^{-2}})=\frac{1}{n}$, to obtain the path $\mathbf{W}^n_{(j+1)n^{-2}}$ we extend the path $\mathbf{W}^n_{jn^{-2}}$ by a Brownian motion $B_j$, started from the terminal point of $\mathbf{W}^n_{jn^{-2}}$, which is given by $\widehat{\mathbb{W}}^n_{jn^{-2}}$. If instead ${S}^n_{(j+1)n^{-2}} - {S}^n_{jn^{-2}}=-\frac{1}{n}$, to obtain $\mathbf{W}^n_{(j+1)n^{-2}}$, we erase part of the path $\mathbf{W}^n_{jn^{-2}}$ from the terminal point $\widehat{\mathbb{W}}^n_{jn^{-2}}$ over a time interval of length $\frac{1}{n}$. Having defined the path $\mathbf{W}^n_{t}$ for each $t=\frac{j}{n^2}$, we then extend $\mathbf{W}^n_{\cdot}$ to be defined for all $t\in[0,\infty)$ by letting the path $\mathbf{W}^n_t$ be unchanged for $t\in [\frac{j}{n^2},\frac{j+1}{n^2})$. By construction, $\mathbb{W}^n_t=(\mathbf{W}^n_t,{S}^n_t)$ is then the random element of $D_{\mathcal{W}}[0,\infty)$, with path as defined in (1.16) and lifetime process ${S}^n$. Note that if we fix a path $\mathbf{W}^n_t$, then $\mathbf{W}^n_t:[0,{S}^n_t]\mapsto \mathbb{R}^d$.

The connection between the random walk $\widetilde{S}^n$ defined in (1.16) and the branching process $\widetilde{M}^n$ defined in (1.18) motivates the definition of the discrete Brownian snake we have given. In the spirit of (1.16) and (1.18), we can consider an excursion of ${S}^n$ to be a contour process coding a random tree. This random tree is the genealogy of one of the initial individuals in the branching process $\widetilde{M}^n$, up until time $K$. It then follows that $(\mathbb{\widehat{W}}^n_t)_{t\geq0}$ traces out the spatial motion of a branching Brownian motion with genealogy given by this random tree, one lineage at a time. It is not hard to believe, and indeed we will see below (see (1.31)), that when this procedure is carried out across $n$ excursions of $S^n$ that the resulting branching Brownian motion is in fact the process $X^n$ defined in (1.23).

We describe formally how the process $X^n$ can be recovered from the Brownian snake. Define the number of upsteps made by ${S}^n$ from level $\frac{i}{n}$ to level $\frac{i+1}{n}$ by time t as
\begin{equation}
    L^{n,{i}/{n}}_t=\sum_{j=0}^{\lfloor n^2t \rfloor}1_{{S}^n_{j/n^2}=i/n,{S}^n_{(j+1)/n^2}=(i+1)/n}.
\end{equation}
Recall ${S}^n$ has undergone a Donsker-rescaling, see (1.26).

Define the (discrete) local time of ${S}^n$ at level $s$ by time $t$ to be the rescaled number of upsteps from level $\frac{\lfloor ns \rfloor}{n}$ by time t. Formally,
\begin{equation}
L^{n,s}_t=\frac{1}{n}L^{n,{\lfloor ns \rfloor}/{n}}_t.
\end{equation}

Introduce the inverse local time of $S^n$ at level $a$ by time $t$ as
\begin{equation}
    T^{n,a}_t=\text{inf}\{s:L^{n,a}_s>t\}.
\end{equation}
$T^{n,a}_t$ is the first time that the number of upsteps made by $S^n$ from level $a$ exceeds $nt$. 

Then, for $\phi \in \mathcal{B}(\mathbb{R}^d)$, we have that
\begin{equation}
    X^n_{{i}/{n}}(\phi)=\int_{0}^{T^{n,0}_1}\phi(\widehat{\mathbb{W}}_s)L^{n,{i}/{n}}(ds)
\end{equation}
defines the same measure-valued process as in (1.23), where $L^{n,a}(ds)$ denotes the random measure with random distribution function $s \mapsto L^{n,a}_s$. This convention of using the same notation to denote an increasing function and the corresponding measure will be used throughout this paper.

\begin{remark}
    The definition of the process $X^n$ in (1.31) explains the truncation in the definition of the process $X^n$ in (1.23). Indeed, according to (1.31), 
    \[
    X^n_K(1_{\mathbb{R}^d})=L^{n,K}([0,T^{n,0}_1]).
    \]
    However, according to the definition of the local time $L^{n,a}$ in (1.29) and the definition of the contour process $S^n$ in (1.26), $L^{n,K}$ is the 0 measure, as the reflected random walk $S^n$ does not make any upsteps from level $K$. Thus $X^n_K=0$, which is consistent with the definition of $X^n$ in (1.23).
\end{remark}

We can now state the third main result of this paper.

\begin{theorem}
    Under (1.24), the pair $(\mathbf{W}^n,S^n)$ is C-tight in $D_{\mathcal{W}}[0,\infty)$ and converges weakly to a limiting $(\mathbf{W},S)$, where $S=f(Y)$ is the $W$-associated process reflected at 0 and $K$. Moreover, if we let $L^a_t$ be the local time of $S$ at level $a$ by time $t$, $T^a_t$ be the inverse local time of $S$ at level a by time t, and using Theorem~1.1 let $X$ be a limit point of the sequence $\{X^n\}_{n\geq1}$, then $\forall\phi\in\mathcal{B}(\mathbb{R}^d), \forall t \in [0,K]$,
    \[
    X_t(\phi)=\int_{0}^{T^{0}_1}\phi(\widehat{\mathbb{W}}_s)L^{t}(ds).
    \]
    In particular, any limit point $X$ of the sequence $\{X^n\}_{n\geq1}$ is uniquely characterised.
\end{theorem}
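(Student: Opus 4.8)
The plan is to prove the three assertions in order: convergence of the lifetime process, C-tightness and weak convergence of the full snake $(\mathbf{W}^n,S^n)$, and finally passage to the limit in the representation (1.31). First I would handle the lifetime process. Theorem~1.1 gives $\widetilde{S}^n \Rightarrow Y$ in $D[0,\infty)$, and since $Y$ is the $W$-associated process it lies in $C[0,\infty)$ by the conservativeness requirement (1.20) together with the continuity of $W$ assumed in (1.24). The folding map $f$ defined in (1.25) is $1$-Lipschitz, so the continuous mapping theorem yields $S^n = f(\widetilde{S}^n) \Rightarrow f(Y) = S$ with $S$ continuous, which already establishes C-tightness and identifies the limit of the lifetime component.

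The structural fact I would exploit throughout is that the driving Brownian motions $B_1,B_2,\dots$ are independent of the environment $(\beta^{(n)}_i)$, hence of $S^n$. Conditionally on $S^n$, the path process $\mathbf{W}^n$ is built exactly as a discrete Brownian snake with a prescribed lifetime function, so the conditional law is identical to the fixed-environment case: for $s<t$ the paths $\mathbf{W}^n_s$ and $\mathbf{W}^n_t$ agree up to level $\min_{[s,t]} S^n$ and differ above it by independent Brownian segments whose total length is controlled by $S^n_s + S^n_t - 2\min_{[s,t]} S^n$. Feeding the C-tightness of $S^n$ into these increment bounds and applying the discrete-approximation tightness criterion for the snake from [8] yields C-tightness of $\mathbf{W}^n$, and hence of the pair. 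To identify limit points I would invoke the Skorokhod representation to realise $S^n \to S$ almost surely, and observe that, driven by the same spatial noise and conditioned on the lifetime paths, the snake construction is a continuous functional of the lifetime path; this promotes the convergence to joint convergence $(\mathbf{W}^n,S^n) \Rightarrow (\mathbf{W},S)$, where $(\mathbf{W},S)$ is the Brownian snake with continuous lifetime process $S=f(Y)$.

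It then remains to pass to the limit in (1.31). The rescaled upstep counts $L^{n,a}$ are the local times of $S^n$, and their convergence to the local time $L^{a}$ of $S$ is essentially the content of Theorem~1.1, where the process $\widetilde{M}^n$ is precisely such a rescaled local time converging to $H = L_Y(\,\cdot\,,M_W(\psi))$; the inverse local time $T^{n,0}_1 \to T^0_1$ follows. Combining the uniform-on-compacts convergence of the terminal-point process $\widehat{\mathbb{W}}^n$ with the weak convergence of the local-time measures $L^{n,i/n}(ds) \to L^t(ds)$, both realised on the almost-sure Skorokhod coupling, gives
\[
X^n_{\lfloor nt\rfloor/n}(\phi) = \int_0^{T^{n,0}_1} \phi(\widehat{\mathbb{W}}^n_s)\, L^{n,\lfloor nt\rfloor/n}(ds) \longrightarrow \int_0^{T^0_1} \phi(\widehat{\mathbb{W}}_s)\, L^{t}(ds)
\]
for bounded continuous $\phi$, with a standard approximation argument extending this to $\phi\in\mathcal{B}(\mathbb{R}^d)$ once one checks that the limiting occupation measure assigns no mass to discontinuity sets. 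Since the right-hand side is a fixed measurable functional of the snake $(\mathbf{W},S)$, whose law was identified above, the law of any limit point $X$ of $\{X^n\}$ — tight by Theorem~1.2 — is thereby pinned down, which is the claimed unique characterisation.

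I expect the principal obstacle to be the snake tightness and joint convergence of the second step in this random-environment setting, since the available snake convergence theorems are phrased for a Brownian (or otherwise explicitly prescribed) lifetime, whereas here the lifetime limit $S=f(Y)$ is a diffusion in the random potential $W$. The independence of the spatial motion from the environment is exactly what rescues the argument, letting the fixed-environment increment estimates be applied after conditioning; making this conditioning and the resulting \emph{joint} convergence rigorous is the technical heart of the proof. A secondary subtlety is the simultaneous control of the terminal-point process and the local-time measures in the display above, for which the almost-sure Skorokhod coupling is indispensable.
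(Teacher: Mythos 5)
Your first and last steps do track the paper: $S^n=f(\widetilde{S}^n)\Rightarrow f(Y)$ by continuity of the folding map (this is Proposition 3.2), and the final passage to the limit in (1.31) on a Skorohod coupling, using continuity of $T^0_{\cdot}$ at $r=1$ and of $L^0$ at $T^0_1$, is exactly how the paper concludes. The genuine gap is at what you yourself call the technical heart: tightness of the path component $\mathbf{W}^n$. You propose to condition on $S^n$, observe that the paths then differ by Brownian segments whose durations are controlled by $S^n_s+S^n_t-2\min_{[s,t]}S^n\leq 2\omega_{S^n}(\delta)$, and ``feed in'' the C-tightness of $S^n$. This does not suffice. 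The supremum in the tightness criterion (3.7)--(3.8) runs over roughly $Tn^2$ discrete path segments, while $\omega_{S^n}(\delta)$ does \emph{not} decay in $n$ (it converges in law to $\omega_S(\delta)>0$); a union bound conditional on the lifetime therefore produces a factor of order $n^2\exp\left(-c\epsilon^2/\omega_{S^n}(\delta)\right)$, which diverges. C-tightness of the lifetime is a distributional statement about its modulus of continuity and supplies no uniform-in-$n$ moment or tail bound with which to run a Kolmogorov or chaining argument. The way out, both in [8] and in this paper, is to count displaced particles \emph{genealogically} rather than temporally: the key bound (Lemma 3.6, following Lemma 4.4 of [8]) controls the number of particles displaced by more than $\delta^{\frac{1}{2}-\eta}$ from their ancestor, and its proof requires annealed branching-process estimates — the survival-probability asymptotics and the expected-mass bound for the BPRE. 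These are precisely Lemmas 3.3--3.5 of the paper; in the correlated environment they are \emph{not} ``identical to the fixed-environment case'', but require the generating-function comparison of Section 3.1, and they are where assumptions 2) and 3) of (1.24) enter the proof. Your proposal never produces, or even cites, these estimates, so the tightness step is unsupported.

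A second, smaller gap: you dismiss convergence of the local times as ``essentially the content of Theorem 1.1''. Theorem 1.1 gives convergence of the one-parameter profile $a\mapsto\widetilde{M}^n(a)$, i.e.\ the local time evaluated at the single random time $T^{n,0}_n$. Theorem 1.3 needs the field $(a,t)\mapsto L^{n,a}_t$ jointly with the snake and the inverse local times: its C-tightness as a measure-valued process (Lemma 3.10, proved via the decomposition into the branching processes $\bar{Z}^{n,i,j,\delta}$ started at interior levels, each handled by Kurtz's theorem as in Proposition 2.2), the tightness and non-degeneracy of the inverse local times (Lemma 3.1), and the identification of any limit as the local time of $S$ together with the continuity statements (Lemma 3.11). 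These do not follow formally from Theorem 1.1, and they are needed even to make sense of the almost-sure convergence of $\int_0^{T^{n,0}_1}\phi(\widehat{\mathbb{W}}^n_s)L^{n,t}(ds)$ in your final display.
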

\subsection{Context}
Before proceeding with the proofs, we briefly place our work in context. The context for Theorem~1.1 was outlined in Section 1.1. 

Theorem~1.2 is similar to a result shown by Mytnik in [7], and Theorem~1.3 is similar to a result shown by Mytnik, Xiong and Zeitouni in [8]. We describe the model studied in [7]. 

We are given an i.i.d. mean 0 sequence of random fields $(\xi_i(x))_{i \geq 0}$ in $\mathbb{R}^d$, with covariance function 
\[
g(x,y)=\mathbb{E}[\xi_i(x)\xi_i(y)], \quad x,y \in \mathbb{R}^d.
\]
Let $X^n$ be a BBMRE where a particle located at position $x$ at time $t=\frac{i}{n}$ either dies with probability $\frac{1}{2} - \frac{\xi_i(x)}{2\sqrt{n}}$, or splits into two with probability $\frac{1}{2} + \frac{\xi_i(x)}{2\sqrt{n}}$.

It was shown in [7], under some additional technical assumptions on $(\xi_i(x))_{i \geq 0}$, that the sequence $\{X^n\}_{n\geq1}$ converges weakly in $D_{\mathcal{M}(\mathbb{R}^d)}[0,\infty)$ to some continuous superprocess in a random environment, $X$. The limit was shown to be the unique solution to the following martingale problem:
\begin{equation}
\forall \phi \in \mathcal{C}_b^2(\mathbb{R}^d): \quad 
X_t(\phi) - X_0(\phi) - \frac{1}{2}\int_0^tX_s(\Delta\phi)ds
\end{equation}
is a continuous martingale, with quadratic variation process
\begin{equation}
\int_0^tX_s(\phi^2)ds + \int_0^t\int_{\mathbb{R}^d}\int_{\mathbb{R}^d}g(x,y)\phi(x)\phi(y)X_s(dx)X_s(dy).
\end{equation}
This model was further studied and generalised by Mytnik, Xiong and Zeitouni in [8]. The $(\xi_i(x))_{i \geq 0}=(\xi_i^n(x))_{i \geq 0}$ were taken to have mean $\frac{\nu}{\sqrt{n}}$ and covariance function $g$ as above. At time $t=\frac{i}{n}$ a particle located at $x$ gives birth to a random number of offspring according to a geometric offspring distribution with parameter $\frac{1}{2} - \frac{\xi_i(x)}{4\sqrt{n}}$. 

The authors show that the measure-valued processes converges weakly in $D_{\mathcal{M}(\mathbb{R}^d)}[0,\infty)$ to a continuous superprocess in a random environment, $X$. They state that any limit point satisfies the following martingale problem:
\begin{equation}
\forall \phi \in \mathcal{C}_b^2(\mathbb{R}^d)\: : \:
X_t(\phi) - X_0(\phi) - \frac{1}{2}\int_0^tX_s(\Delta\phi)ds - \int_0^tX_s\left((\nu+\frac{1}{2}\bar{g})\phi\right)ds
\end{equation}
is a continuous martingale, with quadratic variation process
\[
2\int_0^tX_s(\phi^2)ds + \int_0^t\int_{\mathbb{R}^d}\int_{\mathbb{R}^d}g(x,y)\phi(x)\phi(y)X_s(dx)X_s(dy),
\]
where $\bar{g}=g(x,x)$.

The authors in [8] also characterise the limiting $X$ using a Brownian snake exactly as we do in Theorem~1.3. We shall return to this model at the end of section 4 to draw some comparisons between the martingale problem (1.34) and the generalisation we shall prove in Theorem~1.2.

Theorem~1.2 and Theorem~1.3  can be seen as providing an analogous treatment of BBMRE to that in [7] and [8], but where the correlation in the environments is taken to be across time, and not space.

We also briefly explain the importance of the assumption (1.13). Let $\Phi^n_{\xi_i(x)}$ denote the probability generating function of the geometric offspring distribution at generation $i$, conditioned on $\xi_i(x)$, the environment at location $x\in \mathbb{R}^d$ from the model in [8]. Then
\begin{equation}
(\Phi^n_{\xi_i(x)})'(1)=1+\frac{\xi_i(x)}{\sqrt{n}}+\frac{\xi_i(x)^2}{2n}+o\left(\frac{1}{n}\right),
\end{equation}
holds. 

If we define
\begin{equation}
    B^n_t(x):=\frac{1}{\sqrt{n}}\sum_{i=1}^{\lfloor nt \rfloor}\xi_i(x),
\end{equation}
then 
\begin{equation}
    B^n_t(x)\to B,
\end{equation}
also holds, where $\frac{\partial B_t(x)}{\partial t}$ is a Gaussian noise, white in time and coloured in space. We can view the $B^n$ as defining discrete potential functions for the branching processes, which by (1.37) converge to a non-trivial scaling limit. 

If we now let $\Phi_{\beta^{(n)}_i}$ denote the probability generating function of the Geom($1-\beta^{(n)}_i$) offspring distribution, conditioned on the environment $\beta^{(n)}_i$, then
\begin{equation}
    (\Phi_{\beta^{(n)}_i})'(1)=1-\frac{1}{D_n}\ln\left(\frac{1-\beta_i}{\beta_i}\right)+\frac{1}{2D_n^2}\left(\ln\left(\frac{1-\beta_i}{\beta_i}\right)\right)^2+o\left(\frac{1}{D_n^2}\right),
\end{equation}
can be seen to be true. Assumption (1.13) requires the analogue of (1.37) to be true for our model. It is interesting that (1.13) is also precisely what is needed to obtain a scaling limit for the random walk $\widetilde{S}^n$, defined in (1.15).

\subsection{Structure of the paper}
We shall prove Theorem~1.1 in Section 2. 

In Section 3 we will show convergence of the sequence of Brownian snakes and prove Theorem~1.3. The proof requires some preliminary work and its structure is outlined at the start of the section.

In Section 4 we prove Theorem~1.2. We will give an explicit semimartingale decomposition of $X^n_t(\phi)$, for a test function $\phi \in \mathcal{C}^2_b(\mathbb{R}^d)$. Tightness of the sequence $\{X^n\}_{n\geq1}$ will follow from tightness of all the terms in the semimartingale decomposition, and we derive the form of the martingale problem stated in Theorem~1.2 at the end of this Section. 

In Section 5 we give an explicit example of a sequence $(\beta_i)_{i \geq 1}$ satisfying our assumptions (1.12) for Theorem~1.1 and (1.24) for Theorems 1.2 and 1.3 that displays long range dependence.

In the Glossary section at the end of the paper we list frequently used notation.
\section{Proof of Theorem 1.1}
We shall first show weak convergence of the sequence $\{\widetilde{S}^n\}_{n\geq1}$ defined in (1.16). The method of proof is that used in [13]. In [13], $(\beta_i)_{i\geq1}$, the environment for the process $\widetilde{S}^1$, was taken to be i.i.d., and convergence to the Brox diffusion was shown, but we can transfer all of the results over to our situation. Since all the relevant proofs in [13] carry over, we defer to [13] for the details and only sketch the proofs of the main results.

First define the discrete potential
\begin{equation}
    {W}^n(x):=\sum_{i=1}^{\lfloor nx \rfloor} \ln\left(\frac{1-\beta^{(n)}_i}{\beta^{(n)}_i}\right)=\frac{1}{D_n}\sum_{i=1}^{\lfloor nx \rfloor} \ln\left(\frac{1-\beta_i}{\beta_i}\right), \; x \in \mathbb{R}.
\end{equation}
Note by (1.13) that $\{{W}^n(x) \; ; \; x\in \mathbb{R}\}$ converges weakly in $D(-\infty,\infty)$ to $\{W(x)\; ; \; x\in \mathbb{R}\}$, where $W$ is defined in (1.13). $W^n$ is known as the potential function for the random walk $\widetilde{S}^n$.

Then, recalling the definition of Brownian motion in a random potential in (1.19), define $Y^n$ to be the $W^n$-associated process and $Y$ to be the $W$-associated process. We denote by $A_{{W}^n}$ and $dM_{{W}^n}$ the scale function, respectively speed measure, defined by the potential ${W}^n$, and by $A_W$ and $dM_W$ the scale function, respectively speed measure, defined by the potential $W$.

There is a subtlety in definition of $Y^n$ which needs to be handled. We have defined $Y$ to be the $W$-associated process, where $W$ is conservative. According to the definition of conservativity, see (1.20), $Y$ then exists as a random element of $C[0,\infty)$, as $A_W^{-1}(B(M_W^{-1}(t)))$ is well defined for all $t$. The same is not necessarily true of $Y^n$, as we do not assume $W^n$ is conservative. Thus in the case $W^n$ is not conservative, we introduce a cemetery point $\Lambda$ and consider $Y^n$ to be taking values in $\mathbb{R} \cup\{\Lambda\}$, where $Y^n_t=\Lambda$ whenever $B(M_{W^n}^{-1}(t)) \notin \text{ran}(A_{W^n})$. This is handled in exactly the same way as in Section 2 of [13], and we refer the reader there for the details.

We are going to show that the ${W}^n$-associated process - ${Y}^n$ - is in some sense `close' to the random walk $\widetilde{S}^n$. It is relatively easy to show that the $W^n$-associated processes converge weakly to the $W$-associated process. The main difficulty is to pass from convergence of these continuous time processes to convergence of the random walks $\widetilde{S}^n$. 

The first step of the proof is to show weak convergence of the sequence $\{Y^n\}_{n\geq1}$ to $Y$. 

\begin{lemma}
    The sequence $\{Y^n\}_{n \geq 1}$ converges weakly in $C_{\mathbb{R}\cup\{\Lambda\}}[0,\infty)$ to $Y$.
\end{lemma}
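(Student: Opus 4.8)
The plan is to exhibit both $Y^n$ and $Y$ as the image of a single Brownian motion $B$ and the respective potential under one measurable functional, and then to deduce the weak convergence from the continuous mapping theorem. Since the Brownian motion $B$ appearing in the construction (1.19) may be taken to be the same for every $n$ and for the limit, I write $Y^n=F(W^n,B)$ and $Y=F(W,B)$, where $F$ sends a potential $Z$ to the $Z$-associated process $t\mapsto A_Z^{-1}(B(M_Z^{-1}(t)))$. By (1.13) we have $W^n\Rightarrow W$ in $D(-\infty,\infty)$, and $B$ is independent of the environment, so it suffices to show that, for almost every realization of $B$, the map $Z\mapsto F(Z,B)$ is continuous at $W$ in the relevant topologies; the conclusion then follows by combining the continuous mapping theorem with Skorohod's representation theorem. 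Accordingly I would first place the potentials on a common probability space so that $W^n\to W$ almost surely in $D(-\infty,\infty)$, and fix a realization of $B$ independent of this convergence.

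I would then trace the convergence through each ingredient of $F$ in turn. Skorohod convergence $W^n\to W$ yields $W^n(x)\to W(x)$ at every continuity point $x$ of $W$, hence for Lebesgue-almost every $x$, and entails $\sup_n\sup_{|x|\leq a}|W^n(x)|<\infty$ on each compact. Since the exponential is continuous, bounded convergence gives $A_{W^n}(y)\to A_W(y)$ for every $y$, and by monotonicity of the $A_{W^n}$ together with continuity of the limit this upgrades to local uniform convergence; consequently the inverses satisfy $A_{W^n}^{-1}\to A_W^{-1}$ locally uniformly on $\mathrm{ran}(A_W)$, and composing with $B$ gives $A_{W^n}^{-1}(B(s))\to A_W^{-1}(B(s))$ uniformly on compact $s$-intervals. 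Feeding this into the speed measure and applying dominated convergence yields $M_{W^n}(t)\to M_W(t)$ locally uniformly, whence the inverse time changes converge, $M_{W^n}^{-1}\to M_W^{-1}$; assembling the pieces gives $Y^n(t)=A_{W^n}^{-1}(B(M_{W^n}^{-1}(t)))\to A_W^{-1}(B(M_W^{-1}(t)))=Y(t)$ locally uniformly, which is the desired almost sure convergence.

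The main obstacle is the composition step hidden in the speed measure, namely passing from $A_{W^n}^{-1}(B(s))\to A_W^{-1}(B(s))$ to $W^n(A_{W^n}^{-1}(B(s)))\to W(A_W^{-1}(B(s)))$. This is delicate because $W$ is only assumed conservative and may carry discontinuities, so composing a sequence of convergent arguments with a merely c\`adl\`ag potential is not automatically continuous. I would control this by exploiting that $D$-convergence gives $W^n(x_n)\to W(x_0)$ whenever $x_n\to x_0$ and $x_0$ is a continuity point of $W$, and that the discontinuity set of $W$ is countable: the set of times $s$ for which $A_W^{-1}(B(s))$ is a jump point of $W$ is contained in $\{s:B(s)\in A_W(\text{jumps of }W)\}$, a countable union of fixed levels, at which Brownian motion spends zero occupation time almost surely. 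Thus the integrand in $M_W$ converges for Lebesgue-almost every $s$, and, being uniformly bounded on compacts, dominated convergence applies. Finally, the cemetery point requires care: because the $W^n$ are not assumed conservative, $Y^n$ may reach $\Lambda$ whenever $B(M_{W^n}^{-1}(t))\notin\mathrm{ran}(A_{W^n})$, whereas conservativity (1.20) of $W$ guarantees $Y$ never does. I would handle this exactly as in Section 2 of [13], using the local uniform convergence of the $A_{W^n}$ and the conservativity of the limit to force the excursions of $Y^n$ to $\Lambda$ to become negligible on each compact time interval, so that they do not obstruct convergence in $C_{\mathbb{R}\cup\{\Lambda\}}[0,\infty)$.
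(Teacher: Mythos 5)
Your proposal is correct and follows essentially the same route as the paper: apply Skorohod's representation theorem to make the potentials converge almost surely, then invoke a deterministic (quenched) continuity statement for the map sending a potential $Z$ to its $Z$-associated process, handling the cemetery point $\Lambda$ by the argument of Section 2 of [13]. The only difference is cosmetic -- the paper cites this deterministic statement as Theorem 5 of [13], whereas you sketch its proof (scale functions, their inverses, the speed measure, and the occupation-time argument at the countably many jump levels of $W$); your sketch is an unpacking of that same computational result.
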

\begin{proof}
    We refer to [13] for the details.

    Theorem 5 in [13] provides a computational proof, based on definition (1.19), showing that if $\{Z^n\}_{n\geq1}$ is a sequence of deterministic, piecewise constant potentials converging in $D(-\infty,\infty)$ to a deterministic, conservative potential $Z$, then the $Z^n$-associated processes converge almost surely in $C_{\mathbb{R} \cup \{\Lambda\}}[0,\infty)$ to the $Z$-associated process.
    
    By Skorohod's representation theorem, we then switch to a probability space in which the potentials $W^n\to W$ a.s., and apply Theorem 5 in [13]. Since the convergence has then been shown for every realisation of the potentials, we conclude that the result holds for the random potentials as well.
\end{proof}

We now begin to make clear in what sense the process $Y^n$ and the random walk $\widetilde{S}^n$ are `close'. 
\begin{lemma}
    Let $\rho^n_i=\inf\{t \geq 0: Y^n_t \notin (\frac{i-1}{n},\frac{i+1}{n})\},$ where $Y^n$ is the $W^n$-associated process, started from $\frac{i}{n}$.
    Then $\mathbb{P}(Y^n_{\rho^n_i}=\frac{i+1}{n})=\beta^{(n)}_i$ .
\end{lemma}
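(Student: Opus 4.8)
The plan is to use that, in its natural scale, $Y^n$ is a time-changed Brownian motion, so that the exit distribution from $(\frac{i-1}{n},\frac{i+1}{n})$ is governed by the classical scale-function (gambler's-ruin) formula. Concretely, from the definition (1.19) of the $W^n$-associated process we have $A_{W^n}(Y^n_t)=B(M_{W^n}^{-1}(t))$, which is a continuous local martingale, being a time change of the Brownian motion $B$. Up to the exit time $\rho^n_i$ the process $Y^n$ stays in the bounded interval $(\frac{i-1}{n},\frac{i+1}{n})$, on which the piecewise-constant potential $W^n$ is bounded; hence $A_{W^n}(Y^n_{t\wedge\rho^n_i})$ is a bounded continuous local martingale and therefore a true martingale. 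Boundedness of $W^n$ on the interval also makes the speed measure finite there, so the time change $M_{W^n}$ does not explode before the underlying Brownian motion leaves $(A_{W^n}(\frac{i-1}{n}),A_{W^n}(\frac{i+1}{n}))$; this gives $\rho^n_i<\infty$ a.s. and, since the exit occurs in the interior of $\mathrm{ran}(A_{W^n})$, sidesteps the cemetery-point subtlety arising when $W^n$ is not conservative.

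First I would apply optional stopping to the bounded martingale $A_{W^n}(Y^n_{t\wedge\rho^n_i})$, using $\rho^n_i<\infty$ a.s. and that $Y^n_{\rho^n_i}\in\{\frac{i-1}{n},\frac{i+1}{n}\}$, to obtain
\[
A_{W^n}\!\left(\tfrac{i}{n}\right)=\mathbb{P}\!\left(Y^n_{\rho^n_i}=\tfrac{i+1}{n}\right)A_{W^n}\!\left(\tfrac{i+1}{n}\right)+\mathbb{P}\!\left(Y^n_{\rho^n_i}=\tfrac{i-1}{n}\right)A_{W^n}\!\left(\tfrac{i-1}{n}\right),
\]
so that
\[
\mathbb{P}\!\left(Y^n_{\rho^n_i}=\tfrac{i+1}{n}\right)=\frac{A_{W^n}(\frac{i}{n})-A_{W^n}(\frac{i-1}{n})}{A_{W^n}(\frac{i+1}{n})-A_{W^n}(\frac{i-1}{n})}.
\]

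Then I would evaluate the scale increments using the explicit form $A_{W^n}(y)=\int_0^y e^{W^n(x)}dx$ together with the fact that $W^n$ is constant on each cell $[\frac{k}{n},\frac{k+1}{n})$, with value $\sum_{j=1}^{k}\ln(\frac{1-\beta^{(n)}_j}{\beta^{(n)}_j})$. Writing $c_k:=\prod_{j=1}^{k}\frac{1-\beta^{(n)}_j}{\beta^{(n)}_j}$ this gives $A_{W^n}(\frac{k+1}{n})-A_{W^n}(\frac{k}{n})=c_k/n$, whence the ratio above equals $c_{i-1}/(c_{i-1}+c_i)$. Since $c_i=c_{i-1}\cdot\frac{1-\beta^{(n)}_i}{\beta^{(n)}_i}$, this simplifies to $\beta^{(n)}_i$, as claimed. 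The only genuinely delicate point is the justification in the first paragraph—confirming that $\rho^n_i$ is finite, that $A_{W^n}(Y^n)$ is a true (not merely local) martingale up to $\rho^n_i$, and that the non-conservativity/cemetery issue plays no role because the whole argument takes place inside a fixed bounded interval; everything after optional stopping is a direct computation.
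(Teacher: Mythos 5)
Your proposal is correct and follows essentially the same route as the paper: the paper conditions on $W^n$ and cites Theorem 11 (with Lemmas 9 and 10) of [13] and exercises in [11] for precisely the scale-function exit probability $\frac{A_{W^n}(\frac{i}{n})-A_{W^n}(\frac{i-1}{n})}{A_{W^n}(\frac{i+1}{n})-A_{W^n}(\frac{i-1}{n})}$ that you derive by optional stopping, and then performs the same algebra with the piecewise-constant potential to land on $\beta^{(n)}_i$. The only point to make explicit is that, since $\beta^{(n)}_i$ is random, the whole argument (martingale property of $A_{W^n}(Y^n_{t\wedge\rho^n_i})$ included) is to be read conditionally on the environment $W^n$, using the independence of $B$ and $W^n$, exactly as the paper's displayed computation does.
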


\begin{proof}
    Since the full proof can be found in Theorem 11 in [13], we omit the details. Using the fact that $W^n$ is piecewise constant, and conditioning on $W^n$, by Lemma 9 and 10 in [13], and Exercise 2.24, Chapter 3 and Exercise 2.16, Chapter 12 in [11], we know 
    \[
    \begin{aligned}
    \mathbb{P}\left(Y^n_{\rho^n_i}=\frac{i+1}{n}\Bigg|W^n\right)&=\frac{e^{-W^n(i)}}{e^{-W^n(i)}+e^{-W^n(i-1)}} \\
    &=\frac{1}{1+e^{W^n(i)-W^n(i-1)}} \\
    &=\frac{1}{1+\left(\frac{1-\beta_i}{\beta_i}\right)^\frac{1}{D_n}} \\
    &=\beta^{(n)}_i,
    \end{aligned}
    \]
    where the last equality follows from the definition of $\beta^{(n)}_i$ in (1.14).
\end{proof}

We define the process
\begin{equation}
V^n_t=Y^n_{\sigma(t)}, \text{ where } \sigma(t)=\sup \{s \leq t: Y^n_s \in \frac{1}{n}\mathbb{Z}\}.
\end{equation}
Intuitively, $V^n_t$ keeps track of $Y^n_t$ on the grid $\frac{1}{n}\mathbb{Z}$. It is also clear that weak convergence of the sequence $\{V^n\}_{n\geq1}$ to $Y$ follows from weak convergence of the sequence $\{Y^n\}_{n\geq1}$ to $Y$.

The relationship between $V^n$ and $\widetilde{S}^n$ is also fairly intuitive, and is made precise in the next Lemma: 

\begin{lemma}
    $\{V^n_t \; ; \; t \geq 0\}$ is equal in law to $\{U^n_t\; ; \; t \geq 0\},$ where $U^n_t=\widetilde{S}^n_{{m}/{n^2}}$ for $t \in [R^n_m,R^n_{m+1}),$ and where $R^n_m=\sum_{k=1}^{m}\rho^n_k$, for $\rho_i$ defined in Lemma 2.2.
\end{lemma}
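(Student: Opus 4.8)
The plan is to realise $V^n$ as a Markov renewal process embedded in the diffusion $Y^n$, to read off its jump chain and holding times, and to recognise exactly the structure that defines $U^n$. Throughout I condition on the environment $(\beta_i^{(n)})_i$ (equivalently on $W^n$), so that $Y^n$ is a genuine one-dimensional diffusion to which the strong Markov property applies; the cemetery-point convention for non-conservative $W^n$ is carried along as in Section 2 of [13] and affects only the event on which the construction terminates.

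First I would pin down the pathwise form of $V^n$. Set $\tau_0=0$ and $\tau_m=\inf\{t>\tau_{m-1}:Y^n_t\in\frac1n\mathbb Z\setminus\{Y^n_{\tau_{m-1}}\}\}$, the successive times at which $Y^n$ reaches a grid point distinct from the one it last occupied. Since $Y^n$ has continuous paths it cannot skip a lattice site, so $Y^n_{\tau_m}-Y^n_{\tau_{m-1}}=\pm\frac1n$ and $(Y^n_{\tau_m})_{m\ge0}$ is nearest-neighbour on $\frac1n\mathbb Z$. For $t\in[\tau_{m-1},\tau_m)$ the only grid site $Y^n$ can touch is its current one $Y^n_{\tau_{m-1}}$, so $\sigma(t)\in[\tau_{m-1},t]$ lands on a time at which $Y^n$ equals $Y^n_{\tau_{m-1}}$, giving $V^n_t=Y^n_{\tau_{m-1}}$. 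Hence $V^n$ is piecewise constant, equal to $Y^n_{\tau_{m-1}}$ on $[\tau_{m-1},\tau_m)$ with holding time $\tau_m-\tau_{m-1}$.

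Next I would apply the strong Markov property at each stopping time $\tau_m$: the post-$\tau_m$ trajectory is a fresh copy of $Y^n$ started from $Y^n_{\tau_m}$, which yields two things at once. First, $(nY^n_{\tau_m})_m$ is a nearest-neighbour Markov chain whose up-transition probability from level $i$ is $\mathbb P(Y^n_{\rho^n_i}=\tfrac{i+1}n)=\beta_i^{(n)}$ by Lemma 2.2, i.e.\ exactly the transition law of $\widetilde S^n$. Second, the holding time $\tau_{m+1}-\tau_m$, paired with the signed increment it produces, is distributed as the exit time and exit side of $Y^n$ from $(\frac{i-1}n,\frac{i+1}n)$ started from $\frac in$, that is, as $\rho^n_i$ with its associated jump, and these pairs are conditionally independent across $m$ given the visited levels. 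Declaring $\widetilde S^n_m:=nY^n_{\tau_m}$ and $\rho^n_k:=\tau_k-\tau_{k-1}$ gives $R^n_m=\sum_{k\le m}\rho^n_k=\tau_m$ and $U^n_t=\tfrac1n\widetilde S^n_m=Y^n_{\tau_m}=V^n_t$ on $[R^n_m,R^n_{m+1})$; since the chain realised this way has the law of $\widetilde S^n$ and the holding times are the $\rho^n$, the two processes agree in law.

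The main obstacle is the third step, and specifically making the holding-time identification precise rather than only the jump-chain identification. The subtlety is that within a single step the holding time $\tau_{m+1}-\tau_m$ and the direction of the ensuing jump are generically dependent (for $Y^n$, as for Brownian motion in scale coordinates, the exit time and exit side of an interval are not independent), so equality in law requires matching the full joint law of the (holding time, increment) sequence, not merely the marginals. The strong Markov property delivers this by exhibiting these pairs as a Markov renewal sequence driven by the current level, but one must verify the conditional independence across steps carefully and confirm $\tau_m\to\infty$ (equivalently $R^n_m\to\infty$), so that $V^n$ and $U^n$ are well-defined c\`adl\`ag processes on all of $[0,\infty)$; the non-conservative case is where the cemetery point $\Lambda$ and the absorption event must be tracked, and there I would follow the corresponding argument in [13].
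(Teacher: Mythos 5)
Your proof is correct and takes essentially the same approach as the paper, whose own proof of this lemma is simply a citation of Theorem 11 in [13]: that argument likewise embeds the walk in the diffusion $Y^n$ at its successive grid-hitting times and combines the strong Markov property with the exit-side computation of Lemma 2.2. Your write-up just makes explicit the details the paper delegates to the reference (the Markov renewal structure, the joint law of holding time and jump direction, and non-explosion of the renewal times).
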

\begin{proof}
    See Theorem 11 in [13].
\end{proof}

\begin{remark}
    It is shown in [13] that the $(\rho_i^n)_{i \in \mathbb{Z}}$ are i.i.d. with $\mathbb{E}[\rho^n_i]=\frac{1}{n^2}$.
\end{remark}

We have now shown weak convergence of the sequence $\{U^n\}_{n\geq1}$ to $Y$. Thus all that remains to be shown, according to Remark 2.1, is that the $R^n_m$ in Lemma 2.3 can be replaced by their mean. This is done in Section 6 of [13] by first showing tightness of the sequence $\{\widetilde{S}^n\}_{n\geq1}$, and secondly that the finite dimensional distributions of $|\widetilde{S}^n_t-U^n_t|$ converge in probability to 0. The proofs for our model are exactly the same, and so we can show:

\begin{proposition}
    The sequence $\{\widetilde{S}^n\}_{n \geq 1}$ converges weakly in $D[0,\infty)$ to $Y$, the $W$-associated process. Moreover $Y$ is continuous.
\end{proposition}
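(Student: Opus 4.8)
The plan is to transfer the weak convergence $U^n \Rightarrow Y$, already in hand from Lemma~2.3 together with the convergence $V^n \Rightarrow Y$, onto the walk $\widetilde{S}^n$ itself, by showing that the sole discrepancy between the two processes---a random time change---is asymptotically negligible. By Lemma~2.3 the process $U^n$ carries the spatial value $\widetilde{S}^n_{m/n^2}$ over the \emph{random} interval $[R^n_m,R^n_{m+1})$ with $R^n_m=\sum_{k=1}^m\rho^n_k$, whereas the rescaling (1.16) carries the same value over the \emph{deterministic} interval $[m/n^2,(m+1)/n^2)$. Hence $\widetilde{S}^n$ and $U^n$ share a common spatial skeleton and differ only through the clocks $R^n_m$ versus $m/n^2$. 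By Remark~2.3 the increments $\rho^n_k$ are i.i.d.\ with $\mathbb{E}[\rho^n_k]=n^{-2}$, so $m/n^2$ is exactly the mean of $R^n_m$, and the whole matter reduces to replacing $R^n_m$ by its mean.

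Carrying this out requires the two ingredients isolated in Section~6 of [13]. First I would establish tightness of $\{\widetilde{S}^n\}_{n\geq1}$ in $D[0,\infty)$, by verifying a tightness criterion that controls the spatial increments $\widetilde{S}^n_{t+\delta}-\widetilde{S}^n_t$ uniformly in $n$ on compact time intervals; since the limit $Y$ is continuous one in fact expects C-tightness, and the increment estimates of Section~6 of [13] apply to our setting unchanged. Second, I would show that for each fixed $t$ one has $|\widetilde{S}^n_t-U^n_t|\to0$ in probability. Granting tightness, and hence a uniform modulus of continuity for $\widetilde{S}^n$, it suffices to control the temporal displacement between the two processes at time $t$, namely the gap between the number of steps taken by $\widetilde{S}^n$ and by $U^n$ up to time $t$; this gap is dictated by $\sup_{m\leq Tn^2}|R^n_m-m/n^2|$, which vanishes after the appropriate rescaling by a law of large numbers for the i.i.d.\ sequence $(\rho^n_k)$ of mean $n^{-2}$ furnished by Remark~2.3.

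With both ingredients in place the conclusion follows along a standard route. The finite-dimensional distributions of $\widetilde{S}^n$ converge to those of $Y$: the convergence $U^n\Rightarrow Y$ supplies the finite-dimensional convergence of $U^n$, while $|\widetilde{S}^n_t-U^n_t|\to0$ in probability closes the remaining gap; combining finite-dimensional convergence with tightness then upgrades the statement to weak convergence in $D[0,\infty)$. Finally, continuity of $Y$ is immediate from the standing hypotheses: $Y$ is the $W$-associated process for the conservative potential $W$, so by the definition of conservativity (1.20) the map $t\mapsto A_W^{-1}(B(M_W^{-1}(t)))$ is a well-defined random element of $C[0,\infty)$ almost surely.

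I expect the main obstacle to be the second ingredient---quantifying the deviation of the random clock $R^n_m$ from its mean $m/n^2$ and then converting that temporal deviation into a spatial bound on $|\widetilde{S}^n_t-U^n_t|$. This is the one step that genuinely exploits the structural input of Remark~2.3, the i.i.d.\ property and the exact mean $n^{-2}$ of the $\rho^n_k$, and it must be married to the uniform modulus of continuity coming from tightness in order to close the estimate uniformly on compact time windows; everything else is either already available above or a routine application of the principle that finite-dimensional convergence together with tightness yields weak convergence.
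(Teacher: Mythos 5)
Your proposal is correct and follows essentially the same route as the paper: the paper likewise takes the convergence $U^n\Rightarrow Y$ from Lemmas 2.1--2.3, then invokes Section~6 of [13] to first establish tightness of $\{\widetilde{S}^n\}_{n\geq1}$ and second to show that the finite-dimensional distributions of $|\widetilde{S}^n_t-U^n_t|$ converge in probability to $0$, exactly the two ingredients you isolate, with the law of large numbers for the i.i.d.\ clocks $\rho^n_k$ of mean $n^{-2}$ doing the work of replacing $R^n_m$ by its mean. (Minor point: the remark you cite as Remark~2.3 is Remark~2.1 in the paper's numbering.)
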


Having shown the first of the three statements that make up Theorem~1.1, we now turn our attention to the branching processes $\widetilde{M}^n$, defined in (1.18). 

\begin{proposition}
    The sequence $\{\widetilde{M}^{n}(t)\; ; \; t \geq 0\}_{n\geq1} \Rightarrow \{H(t)\; ; \; t \geq 0\}$ in $D[0,\infty)$, where $H(t)=e^{-W(t)}\eta(\tau^{-1}(t))$ for, 
    \[\begin{aligned}
        &\eta(t), \text{ a diffusion satisfying $\eta(0)=1$ and }  d\eta(t)=\sqrt{\eta}dB(t), \\
        & \text{$\tau(t)$ satisfying }t=2\int_0^{\tau(t)}e^{W(s)}ds,
    \end{aligned}\]
    and where $\eta$ is independent of $(W,\tau)$.
\end{proposition}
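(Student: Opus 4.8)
The plan is to exploit the identity in law from Lemma 2.3, which tells us that $\widetilde{S}^n$ is (up to the time-change by the $R^n_m$) a snapshot of the $W^n$-associated process $Y^n$ on the grid $\tfrac{1}{n}\mathbb{Z}$. The branching process $\widetilde{M}^n$ counts upcrossings of $\widetilde{S}^n$, and by the discrete Ray--Knight heuristic described after (1.2), $\widetilde{M}^n(i/n)$ is (a rescaling of) the discrete local time of $\widetilde{S}^n$ at level $i/n$ up to its $n$th return to $0$. First I would make this precise: using Lemma 2.3 and Remark 2.2 (the $\rho^n_i$ are i.i.d.\ with mean $n^{-2}$), I would argue that the upcrossing counts of $\widetilde{S}^n$ converge, after rescaling by $1/n$, to the local time of $Y$ at the corresponding spatial levels, accumulated up to the time $Y$ has spent enough excursions away from $0$. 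This is essentially the content of the classical Ray--Knight correspondence, transported through the weak convergence $\widetilde{S}^n \Rightarrow Y$ established in Proposition 2.1.

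The cleaner route, however, is to identify the limit directly as a branching process in the limiting environment $W$. I would argue that $\widetilde{M}^n$, being a branching process whose generation-$i$ offspring law is Geom$(1-\beta^{(n)}_i)$, converges to a continuous-state branching process in the random environment $W$. The key computation is the one already flagged in (1.39): the mean offspring at generation $i$ is
\[
(\Phi_{\beta^{(n)}_i})'(1)=1-\frac{1}{D_n}\ln\!\left(\frac{1-\beta_i}{\beta_i}\right)+\frac{1}{2D_n^2}\left(\ln\!\left(\frac{1-\beta_i}{\beta_i}\right)\right)^2+o\!\left(\frac{1}{D_n^2}\right),
\]
so that the accumulated log-mean over the first $\lfloor nt\rfloor$ generations is, by (1.40) and (1.13), asymptotically $-W^n(t)\Rightarrow -W(t)$. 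This is precisely the factor $e^{-W(t)}$ appearing in the statement. Concretely, I would set up the branching-process generating-function recursion, pass to the diffusion limit via a standard Feller/CSBP scaling argument, and show that the drift induced by the random environment is governed by $W$ while the fluctuation (variance) term produces the $\sqrt{\eta}\,dB$ noise. The time-change $\tau$ with $t=2\int_0^{\tau(t)}e^{W(s)}ds$ then arises from converting the ``environment clock'' (generations weighted by $e^{W}$) back to the natural time scale, and the independence of $\eta$ from $(W,\tau)$ reflects that the demographic noise $B$ is independent of the environment.

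The concrete steps, in order, would be: (i) write the Laplace/generating functional of $\widetilde{M}^n(t)$ and take its logarithm, isolating the deterministic-environment contribution $-W^n(t)$ from the martingale fluctuations; (ii) use (1.13) to replace $W^n$ by $W$ in the limit and Skorohod-embed so that $W^n\to W$ a.s., exactly as in the proof of Lemma 2.1; (iii) conditionally on the environment, recognise the remaining process as a time-changed Feller diffusion, identifying the time-change $\tau$ as the inverse of $t\mapsto 2\int_0^{\tau}e^{W(s)}ds$ and verifying the SDE $d\eta=\sqrt{\eta}\,dB$; (iv) assemble $H(t)=e^{-W(t)}\eta(\tau^{-1}(t))$ and check tightness in $D[0,\infty)$ to upgrade finite-dimensional convergence to weak convergence. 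Finally, to match the Ray--Knight form asserted in Theorem 1.1, I would note that $H$ so constructed is equal in law to the local-time field $L_Y(x,M_W(\psi))$ of the Brox-type diffusion $Y$; this identification is the genuinely substantive point and follows from the continuous Ray--Knight theorem applied to $Y$ together with the speed-measure description in (1.19).

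I expect the main obstacle to be step (iii), namely justifying the interchange of the environmental randomness with the diffusion limit: one must show that, conditionally on the (random, possibly long-range dependent) environment $W$, the rescaled branching process still converges to a Feller diffusion with the correct random time-change, and that this convergence is stable enough to integrate over the law of $W$. The correlations in $(\beta_i)_{i\geq 1}$ mean the generation-dependent means are not i.i.d., so the usual CSBP-in-random-environment limit theorems do not apply off the shelf; the saving grace is assumption (1.13), which guarantees that the \emph{cumulative} log-mean still converges to a nondegenerate process $W$, and it is this cumulative convergence—rather than any i.i.d.\ structure—that the argument must be built around. Controlling the fluctuation (variance) term uniformly in the environment, so that it contributes the clean $\sqrt{\eta}\,dB$ noise in the limit, is where the technical care will be needed.
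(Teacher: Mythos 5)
Your identification of the limit structure is correct and matches the paper: the cumulative log-mean of the offspring laws converges to $-W$ by assumption (1.13), producing the factor $e^{-W(t)}$, while the variance of the branching produces the Feller noise and the time change $\tau$. However, your proposal has a genuine gap exactly where you flag ``the main obstacle'': step (iii), the passage from these cumulative convergences to the diffusion limit in a dependent environment, is not carried out and no mechanism for carrying it out is offered --- you only state what would need to be shown. That step is the entire mathematical content of the proposition; steps (i), (ii) and (iv) are bookkeeping around it. Moreover, your stated reason for the difficulty is a misjudgment: you write that the usual limit theorems ``do not apply off the shelf'' because the $(\beta_i)$ are correlated, but the paper's proof consists precisely of an off-the-shelf application of Theorem 2.13 of Kurtz [3], a diffusion-approximation theorem for branching processes in varying environments whose hypotheses make no i.i.d.\ assumption whatsoever. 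Its conditions are stated in terms of exactly the cumulative quantities you identified, which is why assumption (1.13) is the right hypothesis for the model.

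Concretely, the paper writes $\widetilde{M}^n(s)=\frac{1}{n}\sum_{j=1}^{n\widetilde{M}^n(\lfloor ns\rfloor-1)}N^n_{\lfloor ns\rfloor,j}$ with $N^n_{i,j}\sim\mathrm{Geom}(1-\beta^{(n)}_i)$ and verifies Kurtz's three conditions: (1) weak convergence of $\sum_{i\leq nt}\ln(m^n_i)$, which is assumption (1.13) with limit $-W$; (2) weak convergence of $\frac{1}{n}\sum_{i\leq nt}\mathbb{E}[(N^n_{i,j}-m^n_i)^2\mid\beta^{(n)}_i]$ to an absolutely continuous process, here $2t$ a.s., which is immediate because $\beta^{(n)}_i\to\frac12$ uniformly in $i$ by assumption 1) in (1.12) --- so the ``technical care'' you anticipate for the variance term is in fact trivial; and (3) a third-moment Lyapunov condition, $n^{-3/2}\sum_{i\leq nt}\mathbb{E}\bigl[|N^n_{i,j}/m^n_i-1|^3\mid\beta^{(n)}_i\bigr]\to0$ in probability, which your proposal omits entirely but which is needed to rule out jumps in the limit (it again follows from uniform boundedness of the $\beta^{(n)}_i$). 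The form $H(t)=e^{-W(t)}\eta(\tau^{-1}(t))$, with $\eta$ independent of $(W,\tau)$, then comes out of Kurtz's theorem directly; no quenched-then-annealed argument or Skorohod embedding is needed. Finally, note that your last paragraph, identifying $H$ with the local-time field $L_Y(x,M_W(\psi))$, is the content of Proposition 2.3, not of this proposition; including it here is harmless but outside the statement being proved.
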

\begin{proof}
    Since $\widetilde{M}^n$ is a branching process, by the same argument as in [12], we are justified in letting
\begin{equation}
\widetilde{M}^n(s)=\frac{1}{n}\sum_{j=1}^{n\widetilde{M}^n(\lfloor ns \rfloor -1)}N^n_{\lfloor ns \rfloor,j},
\end{equation}

where 
\begin{equation}
    \begin{aligned}
        \mathbb{P}(N^n_{i,j}=k|\beta^{(n)}_i)&=(\beta^{(n)}_i)^k(1-\beta_i^{(n)}), \\
        \mathbb{E}[N^n_{i,j}|\beta^{(n)}_i]&=\frac{\beta_i^{(n)}}{1-\beta_i^{(n)}}=:m^n_i, \\
        \mathbb{E}[(N^n_{i,j}-m^n_i)^2|\beta^{(n)}_i]&=\frac{\beta_i^{(n)}}{(1-\beta_i^{(n)})^2}.
    \end{aligned}
\end{equation}
    
    According to Theorem 2.13 in [3], to show weak convergence of the sequence of branching processes $\{\widetilde{M}^n\}_{n\geq1}$, it suffices to show the following three conditions for $N^n_{i,j}$ defined in (2.4):
\[\begin{aligned}
    &1) \text{ The sequence}\quad\left\{\sum_{i=1}^{\lfloor nt \rfloor}\ln(m^n_i)\; ; \; t\geq0\right\}_{n\geq1} \text{ converges weakly to some limiting process,} \\
    &2) \text{ The sequence } \quad\left\{\frac{1}{n}\sum_{i=1}^{\lfloor nt \rfloor}\mathbb{E}[(N^n_{i,j}-m^n_i)^2|\beta^{(n)}_i]\; ; \; t\geq0 \right\}_{n\geq1} \\ &\quad \text{converges weakly to some absolutely continuous process,} \\
    &3) \text{ The sequence}\quad \frac{1}{n^\frac{3}{2}}\sum_{i=1}^{\lfloor nt \rfloor}\mathbb{E}\left[\Bigg|\frac{N^n_{i,j}}{m^n_i}-1\Bigg|^3\Bigg|\beta^{(n)}_i\right] \text{ converges in probability to 0 for every t.}
\end{aligned}\]

1) follows from our assumption (1.13), and the limit is $-W$. 

To show 2):
\begin{equation}
    \begin{aligned}
    \frac{1}{n}\sum_{i=1}^{\lfloor nt \rfloor}\mathbb{E}\left[(N^n_{i,j}-m^n_i)^2|\beta^{(n)}_i\right]&=\frac{1}{n}\sum_{i=1}^{\lfloor nt \rfloor}\frac{\beta_i^{(n)}}{(1-\beta_i^{(n)})^2} \\
    &=:V(t)
    \end{aligned}
\end{equation}
Now observe that, according to the definition of $\beta^{(n)}_i$ in (1.14), we have
\[
\beta^{(n)}_i=\frac{1}{1+(\frac{1-\beta_i}{\beta_i})^{\frac{1}{D_n}}}.
\]
By assumption 1 in (1.12) we see that $\beta^{(n)}_i\to \frac{1}{2}$ a.s. and uniformly in $i$ as $n\to \infty$. Hence we immediately obtain that $V(t)\to 2t$ a.s. as $n\to \infty$.

To show 3): 

\[
\begin{aligned}
\frac{1}{n^\frac{3}{2}}\sum_{i=1}^{\lfloor nt \rfloor}\mathbb{E}\left[\Bigg|\frac{N^n_{i}}{m^n_i}-1\Bigg|^3\Bigg|   \beta^{(n)}_i\right]&=\frac{1}{n^\frac{3}{2}}\sum_{i=1}^{\lfloor nt \rfloor}\mathbb{E}\left[\Bigg|\frac{N^n_{i}-m^n_i}{m^n_i}\Bigg|^3\Bigg|\beta^{(n)}_i\right] \\
&=\frac{1}{n^\frac{3}{2}}\sum_{i=1}^{\lfloor nt \rfloor}\frac{1}{({m^n_i})^{3}}\mathbb{E}\left[|N^n_{i}-m^n_i|^3|\beta^{(n)}_i\right] \\
&=\frac{1}{n^\frac{3}{2}}\sum_{i=1}^{\lfloor nt \rfloor}\frac{1}{({m^n_i})^{3}}\frac{(2-\beta_i^{(n)})\beta_i^{(n)}}{(1-\beta_i^{(n)})^3} \\
&=\frac{1}{n^\frac{3}{2}}\sum_{i=1}^{\lfloor nt \rfloor}\frac{2-\beta_i^{(n)}}{{\beta_i^{(n)}}^2} \\
\end{aligned}
\]

Again by noting that $\beta^{(n)}_i$ are uniformly bounded in $i$, we see that the condition is satisfied.

By Theorem 2.13 in [3], we are done.
\end{proof}
The final step in the proof of Theorem~1.1 is to identify the process $\{H(t)\; ; \; t\geq0\}$ as the local time of $Y$.
\begin{proposition}
    The process $\{H(t)\; ; \;t \geq 0\}$ defined in Proposition 2.2 is equal in law to $\{L_{Y}(a,M_W(\psi))\; ; \; a \geq 0 \}$, where $L_Y(x,t)$ is the local time at level $x$ by time t of $Y$, $dM_W$ is the speed measure defined by $W$, and $\psi=\inf\{t \geq 0:l(0,t)>1\}$ for $l(x,t)$, the local time of Brownian motion.
\end{proposition}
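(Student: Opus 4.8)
The plan is to express the local time of the Brox diffusion $Y$ in terms of the local time of the driving Brownian motion $B$ appearing in (1.19), and then to invoke the classical second Ray-Knight theorem for $B$. Since $B$ is independent of $W$, I would argue conditionally on $W$ throughout; the asserted identity in law then follows by integrating out the potential. The key structural observation is that $Y$ is obtained from $B$ by the scale change $A_W^{-1}$ and the speed time change $M_W$, so both its occupation measure and its local time can be transported back to $B$ explicitly.

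First I would transfer the occupation measure of $Y$ through the scale and time change. Writing $u = M_W^{-1}(t)$, so that $Y(t) = A_W^{-1}(B(u))$ and $dt = \exp(-2W(A_W^{-1}(B(u))))\,du$, the change of variables $x = A_W(a)$ with $A_W'(a) = e^{W(a)}$ in the Brownian occupation formula yields, for bounded measurable $f$,
\[
\int_0^{M_W(U)} f(Y(t))\,dt = \int_{\mathbb{R}} f(a)\, e^{-W(a)}\, l_B(A_W(a),U)\,da,
\]
where $l_B(x,U)$ is the local time (occupation density) of $B$ at level $x$ by time $U$. Reading off the occupation density of $Y$ identifies
\[
L_Y(a, M_W(U)) = e^{-W(a)}\, l_B(A_W(a), U),
\]
once $L_Y$ is normalised as an occupation density with respect to Lebesgue time; this is the normalisation under which $L_Y$ is the scaling limit of the discrete upcrossing counts defining $\widetilde{M}^n$. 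Specialising to $U = \psi = \inf\{s : l_B(0,s) > 1\}$ as in (1.2) — the level $0$ of $B$ corresponding to $Y = A_W^{-1}(0) = 0$ — gives $L_Y(a, M_W(\psi)) = e^{-W(a)}\, l_B(A_W(a), \psi)$. It therefore suffices to prove that $\{l_B(A_W(a),\psi)\; ; \; a \geq 0\} \stackrel{d}{=} \{\eta(\tau^{-1}(a))\; ; \; a \geq 0\}$, after which multiplication by the $W$-measurable factor $e^{-W(a)}$ reproduces $H$ exactly.

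To establish this last equality I would apply the classical second Ray-Knight theorem to $B$: the process $\{l_B(x,\psi)\; ; \; x \geq 0\}$ is a Feller diffusion (squared Bessel of dimension $0$) started at $l_B(0,\psi) = 1$. A deterministic time scaling then shows it agrees in law with $\{\eta(2x)\; ; \; x\geq0\}$, where $\eta$ solves $d\eta = \sqrt{\eta}\,dB$ with $\eta(0)=1$; the factor $2$ absorbs the difference between the $\sqrt{2\eta}$ normalisation of the Ray-Knight limit and the $\sqrt{\eta}$ normalisation in Proposition 2.2, and is the same factor $2$ produced by $V(t)\to 2t$ there. Using the deterministic relation $\tau^{-1}(a) = 2A_W(a)$, read off from $a = 2\int_0^{\tau(a)} e^{W(s)}\,ds = 2A_W(\tau(a))$, the substitution $x = A_W(a)$ gives $l_B(A_W(a),\psi) \stackrel{d}{=} \eta(\tau^{-1}(a))$. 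Crucially, $l_B(\cdot,\psi)$ is a functional of $B$ alone and hence independent of $(W,\tau)$, matching the independence required in Proposition 2.2, and $x = A_W(a) \geq 0$ for $a \geq 0$ since $A_W$ is increasing with $A_W(0)=0$, so the one-sided Ray-Knight theorem is exactly what is needed.

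I expect the main obstacle to be the first step: rigorously establishing the local-time identity $L_Y(a, M_W(\cdot)) = e^{-W(a)}\, l_B(A_W(a),\cdot)$ with the correct normalisation. This requires care in transporting local time simultaneously through the scale change $A_W$ and the speed time change $M_W$, in fixing the occupation-density convention for $L_Y$ so that it coincides with the scaling limit of the discrete object $\widetilde{M}^n$, and in justifying existence and joint measurability of the local times (where continuity of $W$ is convenient). Once this identity is in place, the remaining steps reduce to a direct appeal to the classical Ray-Knight theorem together with bookkeeping of the deterministic time scalings.
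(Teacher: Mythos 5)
Your proposal is correct and follows essentially the same route as the paper: the same occupation-formula change of variables yielding $L_Y(a,M_W(U))=e^{-W(a)}\,l_B(A_W(a),U)$, the same appeal to the classical Ray-Knight theorem with the time-scaling $\eta(2\cdot)$ to fix the normalisation, and the same key identity $\tau^{-1}(a)=2A_W(a)$ to match the limit $H$ from Proposition 2.2. The only difference is presentational — you make the conditioning on $W$ and the independence bookkeeping more explicit, which the paper leaves implicit.
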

\begin{proof}
    Essentially the same proof in the i.i.d. environment case is carried out in [2]. We provide it here for completeness.

    Since $\eta(x)$ has $\eta(0)=1$ and solves
    \begin{equation}
        d\eta(x)=\sqrt{\eta}dB(x),
    \end{equation}
    we know $\{\frac{1}{2}\eta(2x);x \geq 0\}$ solves the same stochastic differential equation and starts from $\frac{1}{2}$. Therefore $\widetilde{\eta}(x):=\eta({2x})$ satisfies $\widetilde{\eta}(0)=1$ and
    \begin{equation}
        d\widetilde{\eta}(x)=\sqrt{2}\sqrt{\widetilde{\eta}}dB(x).
    \end{equation}
    Hence by the classical Ray-Knight Theorem, we know 
    \begin{equation}
        \{l(x,\psi)\; ; \; x \geq 0\}\stackrel{d}{=}\{\widetilde{\eta}(x)\; ; \; x \geq 0\}.
    \end{equation}
We now find an expression for $L_Y(x,t)$. Fix $x \geq 0.$ By the definition of local time, it is enough to check $\int_0^t1_{\{-x \leq Y_s \leq x\}}ds=\int_{-x}^xL_Y(y,t)dy$. Recall the definition of $Y$ as the $W$-associated process, according to (1.19), and that $A_W$ and $dM_W$ denote the scale function and speed measure defined by $W$.
    \[\begin{aligned}
        \int_0^t1_{\{-x \leq Y_s \leq x\}}ds&=:\int_0^tf(Y_s)ds \\
        &=\int_0^tf(A_W^{-1}(B(M_W^{-1}(s))))ds \\
        &=\int_0^{M_W^{-1}(t)}f(A_W^{-1}(B(u)))\exp(-2W(A_W^{-1}(B(u))))du \\
        &= \int_{-\infty}^{+\infty}f(A_W^{-1}(x))\exp(-2W(A_W^{-1}(x)))l(x,M_W^{-1}(t))dx \\
        &=\int_{-\infty}^{+\infty}f(x)\exp(-2W(x))l(A_W(x),M_W^{-1}(t))e^{W(x)}dx \\
        &=\int_{-\infty}^{+\infty}f(x)e^{-W(x)}l(A_W(x),M_W^{-1}(t))dx \\
        &= \int_{-x}^{x}f(y)e^{-W(y)}l(A_W(y),M_W^{-1}(t))dy
    \end{aligned}\]
    Hence
    \[
    \{L_Y(x,t)\; ; \; x \geq 0\}=\{e^{-W(x)}l(A_W(x),M_W^{-1}(t))\; ; \; x \geq 0 \}.
    \]
    We now make the key observation concerning the time change $\tau$ appearing in the definition of $H(t)$ in Proposition 2.2. 
    \begin{equation}
    \tau^{-1}(x)=2\int_0^xe^{W(s)}ds=2A_W(x).
    \end{equation}
    It is this equality that ultimately connects the branching process and random walk in the way implied by Theorem~1.1. We then immediately obtain,
    \[
    \begin{aligned}
        \{L_Y(x,M_W(\psi))\; ; \; x \geq 0\}&=\{e^{-W(x)}l(A_W(x),\psi)\; ; \; &x \geq 0\} \\
        &\stackrel{d}{=}\{e^{-W(x)}\widetilde{\eta}(A_W(x))\; ; \; &x \geq 0\} \\
        &=\{e^{-W(x)}\eta(2A_W(x))\; ; \; &x \geq 0\} \\
        &=\{e^{-W(x)}\eta(\tau^{-1}(x))\; ; \; &x \geq 0\} \\
        &=\{H(x)\; ; \; &x \geq 0\}.
    \end{aligned}
    \]
\end{proof}

\textbf{Proof of Theorem 1.1:}

The convergence of $\{\widetilde{S}^n\}_{n\geq1}$ to $Y$, the $W$-associated process, was shown in Proposition 2.1. The convergence of $\{\widetilde{M}^n\}_{n\geq1}$ to the process $H$ was shown Proposition 2.2, and the identification of $H$ as the local time of the process $Y$ was obtained in Proposition 2.3.

We conclude this section with some remarks on possible extensions of Theorem~1.1.

As mentioned in the introduction, Theorem~1.1 extends the connection between random walks in random environments and branching processes in random environments to include dependent environments. What we have shown here is that as soon as a RWRE admits a scaling limit, the geometric branching process defined from the RWRE also admits a scaling limit, and the two limits satsify a Ray-Knight Theorem.
   
However, there is nothing in the proof of Theorem~1.1 particular to the geometric distribution. Indeed, we could attempt to connect a branching process in a random environment with a random walk in a random environment through local time in their respective scaling limits. To do so we would employ the same techniques used in the above two sections, and in particular rely on the following three ingredients in the proof: 
    
A) A branching process in a (arbitrary) random environment that satisfies conditions 1,2 and 3 from [3]. In particular there must be some limiting potential $\widetilde{W}$ satisfying 1).

B) A random walk in a random environment that converges in distribution to a Brownian motion in the random potential $-\widetilde{W}$.

C) The process $V(t)$ defined in (2.5) to be of the form $at$, for some deterministic constant $a$. This allows for the identification (observed in (2.9)) of $\widetilde{\tau}^{-1}(x)$ and ${A_{-\widetilde{W}}}$, the scale function defined by the potential $-\widetilde{W}$.

In our paper we started with condition B and proved conditions A and C. Starting with condition A and proving condition B (under the mild extra assumption that the mean of the offspring distribution at generation $i$ in the $n$th rescaling converges uniformly in $i$ to 1) follows exactly the same procedure we outlined in the proof of weak convergence of $\{\widetilde{S}^n\}_{n\geq1}$. Condition C does not follow automatically from condition A in the case of arbitrary offspring distribution. It does follow if the variance of the offspring distribution is a continuous function of the mean.

If the process $V(t)$ is not of the form $at$, it would be interesting to know whether the resulting scaling limit of the branching process cannot be the local time of a Brownian motion in a random potential.

\section{Proof of Theorem 1.3}
In this section we shall prove Theorem~1.3. We must define some extra notation.

Let $\mathcal{M}_R(\mathbb{R})$ denote the space of non-negative Radon measures on $\mathbb{R}$, with the vague topology. We recall a sequence $\mu_n \in \mathcal{M}_R(\mathbb{R})$ converges in the vague topology to $\mu$, if  
\[
\int_{\mathbb{R}} fd\mu_n \to \int_{\mathbb{R}}fd\mu 
\]
for all $f\in C_c(\mathbb{R})$, where $C_c(\mathbb{R})$ denotes the space of all continuous functions on $\mathbb{R}$ with compact support. The space $\mathcal{M}_R(\mathbb{R})$ with the vague topology is a Polish space.

Recall $\mathcal{W}$ is the space of stopped paths, and that the sequence of Brownian snakes $(\mathbb{W}^n)_{n\geq1}$ is a therefore a sequence of random elements of $\mathcal{W}$. 

The first main result of this section is the following proposition:
\begin{proposition}
    The process $(\mathbb{W}^n,L^{n,\cdot},T^{n,0}_\cdot)$ converges weakly in $D_{\mathcal{W} \times \mathcal{M}_R(\mathbb{R})} \times \mathcal{M}(\mathbb{R})$ to a unique limit $(\mathbb{W},L^{\cdot},T^{0}_\cdot)$. Moreover, $L^a_t$ is the local time of $S$ at level $a$ by time $t$, and $T^0_r$ is the inverse local time of $S$ at level $0$ by time $r$. 
\end{proposition}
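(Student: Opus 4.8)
The plan is to establish convergence of the triple $(\mathbb{W}^n, L^{n,\cdot}, T^{n,0}_\cdot)$ in three stages, leveraging the fact that $\mathbb{W}^n = (\mathbf{W}^n, S^n)$ has lifetime process $S^n = f(\widetilde{S}^n)$, whose convergence to $S = f(Y)$ already follows from Proposition 2.1 and the continuous mapping theorem (since $f$ defined in (1.25) is continuous). First I would handle the lifetime process and then propagate its convergence to the spatial path $\mathbf{W}^n$, and finally to the local-time objects.

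Let me sketch the steps. First, since $\widetilde{S}^n \Rightarrow Y$ in $D[0,\infty)$ with $Y$ continuous (Proposition 2.1), and $f$ is Lipschitz and continuous, we get $S^n \Rightarrow S = f(Y)$, where $S$ inherits continuity from $Y$. Second, I would use the discrete-snake construction (1.27): conditionally on the lifetime process, the spatial displacements are independent Brownian increments attached along upsteps of $S^n$. The key structural point is that the snake path $\mathbf{W}^n_t$ is a deterministic functional of the lifetime trajectory $S^n$ together with the driving Brownian motions $(B_j)_{j \geq 0}$; this is exactly the discrete approximation to the Brownian snake in the spirit of Section 1.2 of [8]. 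Invoking the continuity of the snake functional (the map from a lifetime path plus driving noise to the stopped-path-valued trajectory is continuous at continuous lifetime paths), I would conclude via a Skorohod-representation argument — switch to a space where $S^n \to S$ almost surely — that $(\mathbf{W}^n, S^n) \Rightarrow (\mathbf{W}, S)$ jointly in $D_{\mathcal{W}}[0,\infty)$. This realizes $\mathbf{W}$ as the Brownian snake driven by lifetime process $S$. Third, I would identify the local-time limits: the discrete local time $L^{n,s}_t$ in (1.29) is, by its definition as a rescaled upstep count, a discrete approximation to the occupation density of $S^n$; since $S$ is a time-changed reflected diffusion possessing jointly continuous local times $L^a_t$, standard local-time convergence for such rescaled occupation functionals (again transferable via Skorohod representation, using that $S^n$ converges to the continuous process $S$) gives $L^{n,\cdot} \Rightarrow L^\cdot$ in $D_{\mathcal{M}_R(\mathbb{R})}$, and correspondingly $T^{n,0}_\cdot \Rightarrow T^0_\cdot$ for the inverse local time at level $0$ by the usual inverse-function continuity at strictly increasing limits. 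Joint convergence of the triple follows by carrying all three convergences on the single almost-sure space.

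The main obstacle will be Step 2 — the convergence of the spatial snake paths, specifically establishing continuity of the snake functional and C-tightness of $\mathbf{W}^n$ in the stopped-path space $(\mathcal{W}, d)$. The difficulty is twofold: the metric $d$ controls both the spatial path uniformly and the lifetime, so small perturbations in the lifetime $S^n$ near its local maxima can in principle cause discontinuous erasure/extension of the path, and one must show these are controlled. I would address this by adapting the tightness and continuity arguments of [8, Section 1.2], checking that the modulus of continuity of $\mathbf{W}^n$ is dominated (conditionally on the lifetime) by that of the driving Brownian motions composed with the oscillation of $S^n$, and that because $S^n$ is C-tight with continuous limit, no macroscopic erasures occur in the limit. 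The assumption that $W$ is continuous (assumption 4 in (1.24)) is essential here, as it guarantees $Y$, and hence $S$, is a genuine continuous diffusion rather than a process with traps, so that the limiting snake is well-defined and the local times are jointly continuous. Uniqueness of the limit follows because the finite-dimensional distributions of the limit are pinned down by the (unique in law) limits of the lifetime process and the conditionally-Gaussian spatial mechanism.
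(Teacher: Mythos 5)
Your skeleton (contour process first, then the spatial snake paths, then the local times) matches the paper's, and your Step 1 is exactly the paper's Proposition 3.2. But Step 2 contains a genuine gap. You invoke ``continuity of the snake functional'' at continuous lifetime paths and then apply Skorohod representation plus the continuous mapping theorem. There is no such fixed functional here: the driving noise is a collection of Brownian motions indexed by the upsteps of $S^n$, whose number on any compact time interval grows like $n^2$, so the map from (lifetime path, noise) to the snake trajectory changes with $n$, and there is no single map whose continuity one could invoke. What replaces this, both in [8] and in the paper, is a probabilistic estimate: one bounds the number of particles alive at time $a_n+\delta$ that are displaced by more than $\delta^{\frac{1}{2}-\eta}$ from their ancestor at time $a_n$ (Lemma 3.6, then Lemma 3.7), and this is where the real work lies. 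That estimate rests on asymptotics for survival probabilities and first moments of the branching process in the \emph{dependent} environment (Lemmas 3.3--3.5), and these do not transfer from [8], whose estimates are for environments i.i.d.\ in time. The whole of Section 3.1 of the paper --- the expansion (3.1) of $\beta^{(n)}_i$ around $\frac{1}{2}$, the Taylor-expansion comparison of generating functions with a fixed-environment branching process, assumptions 2) and 3) of (1.24), and the Lamperti scaling $D_n^2\sim n^{2H}L(n)$ --- exists precisely to supply these inputs. Your proposal never engages with the dependence of the environment, which is the main obstacle the proof has to overcome.

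Step 3 has a similar problem: local time is not a continuous functional of the path, so ``standard local-time convergence for rescaled occupation functionals'' does not follow from $S^n \Rightarrow S$, and for a limit of this type (a reflected, time-changed diffusion in a random potential) there is no off-the-shelf theorem to cite. The paper proves C-tightness of $\{L^{n,\cdot}\}_{n\geq1}$ (Lemma 3.10) by representing local-time increments as Feller-rescaled branching processes in the random environment attached to excursions above level $i\delta$, re-running the argument of Proposition 2.2, and controlling the limits $Z^{i,\delta}$ built from the shifted potentials $\bar{W}_{i,\delta}$; it then identifies any limit point as the local time of $S$ via an occupation-density argument, proves continuity of $t\mapsto L^0_t$ by a separate contradiction argument using Lemma 3.1, and only then obtains the inverse local time identification (Lemma 3.11 (a)--(c)). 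Without these branching-process inputs, your Steps 2 and 3 are assertions rather than proofs.
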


The proof of Theorem 1.3 will follow from Proposition 3.1. Recall the convention that we use the notation $T^{n,0}_{\cdot}$ to denote both the increasing function $r \mapsto T^{n,0}_r$ and the corresponding measure.

The proof of Proposition 3.1 is long and we indicate the structure. We start with Lemma 3.1, which shows that the sequence of inverse local times is tight in $\mathcal{M}(\mathbb{R})$, along with some other properties of the inverse local times which shall be needed later. Section 3.1 is then devoted to estimates on asymptotics for survival probabilities for branching processes in our random environments. These estimates are needed to prove C-tightness of the snake processes, which is the content of Section 3.2. In Section 3.2 we also prove the sequence of snake processes has a unique limit. Section 3.3 deals with C-tightness of the local time processes. The characterisation of the limit points of the sequence of local time and inverse local time processes, and the proof of Proposition 3.1 and Theorem 1.3, are then obtained in Section 3.4.

\begin{lemma}
    \[
    \begin{aligned}
        &\mathbf{a)}\text{ For any $r>0$, the sequence of random variables $\{T^{n,0}_r\}_{n \geq 1}$ is tight,} \\ 
        &\text{\quad and any limiting point }T^0_r \text{ is a.s. not equal to 0.} \\
        &\mathbf{b)}\text{ For any $\epsilon>0, A>0$ there exists $R>0$ such that } \mathbb{P}(T^{n,0}_R>A)\geq1-\epsilon. \\
        &\mathbf{c)}\text{ The sequence $\{T^{n,0}_{\cdot}\}_{n \geq 1}$ is tight in $\mathcal{M}(\mathbb{R})$.} \\
        &\mathbf{d)}\text{ For any limiting point $T^0_{\cdot}$ of $\{T^{n,0}_{\cdot}\}_{n \geq 1}$ } \\
        &\text{ \quad and for any fixed $r \in \mathbb{R}_+$, $T^0_{t}$ is a.s. continuous at $t=r$.}
    \end{aligned}
    \]
\end{lemma}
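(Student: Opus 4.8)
The starting point is an exact balance identity for the reflected walk $S^n$ of (1.26). Since $S^n$ is a $\pm\tfrac1n$ walk confined to $\{0,\tfrac1n,\dots,K\}$ and the time $T^{n,0}_r$ occurs immediately after the $(\lfloor nr\rfloor+1)$-th upstep from $0$, at that instant the walk has just completed $\lfloor nr\rfloor$ full excursions from $0$; over any stretch beginning and ending at $0$ the numbers of up- and down-steps coincide, so the total number of steps equals twice the number of up-steps. Counting up-steps level by level through (1.28)--(1.29), this gives, up to an $O(n^{-2})$ endpoint correction,
\[
T^{n,0}_r = 2\int_0^{K} L^{n,a}_{T^{n,0}_r}\,da .
\]
The profile $a\mapsto L^{n,a}_{T^{n,0}_r}$ is the branching process of Proposition 2.2, now started from initial mass $\lfloor nr\rfloor/n$ and read up to level $K$. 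The three conditions of Theorem 2.13 in [3] are insensitive to the initial mass, so the argument of Proposition 2.2 applies verbatim and yields $\{a\mapsto L^{n,a}_{T^{n,0}_r}\}_n \Rightarrow H_r$ in $D[0,K]$, where $H_r(a)=e^{-W(a)}\eta_r(\tau^{-1}(a))$ is the limit started from $\eta_r(0)=r$.

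For \textbf{a)}, the area functional $\mu\mapsto\int_0^K\mu(a)\,da$ is continuous on $D[0,K]$ at continuous limit points, so the continuous mapping theorem gives $T^{n,0}_r \Rightarrow T^0_r:=2\int_0^K H_r(a)\,da$. Convergence in distribution to a finite limit immediately yields tightness of $\{T^{n,0}_r\}_n$. Any limit is a.s.\ positive: since $\eta_r(0)=r>0$ and $H_r$ is continuous (here using that $W$ is continuous, assumption $4$ of (1.24)), $H_r>0$ on a neighbourhood of $0$, so $T^0_r>0$ a.s. For \textbf{b)} I use the branching property in the initial mass. Conditionally on $W$, $\eta_r$ is Feller's branching diffusion, so $r\mapsto H_r$ has independent stationary increments and $r\mapsto T^0_r$ is a subordinator; as $\mathbb{E}[\int_0^K H_1\,da\mid W]=\int_0^K e^{-W(a)}\,da>0$ a.s.\ (using $\mathbb{E}[\eta_1(s)\mid W]=1$), this subordinator tends to $+\infty$, whence $T^0_R\uparrow\infty$ a.s.\ as $R\to\infty$. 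Given $\epsilon,A$, choose $R$ with $\mathbb{P}(T^0_R>A)>1-\tfrac\epsilon2$; the weak convergence of \textbf{a)} and the portmanteau theorem give $\mathbb{P}(T^{n,0}_R>A)>1-\epsilon$ for all large $n$, while the finitely many remaining $n$ are handled by enlarging $R$, using that $T^{n,0}_\cdot$ is non-decreasing and $T^{n,0}_R\to\infty$ as $R\to\infty$ for each fixed $n$ by recurrence of the reflected walk.

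For \textbf{c)}, I regard $T^{n,0}_\cdot$ as the random Radon measure $dT^{n,0}$ supported on $[0,\infty)$. By the standard tightness criterion for random measures in the vague topology it suffices that $dT^{n,0}([0,R])=T^{n,0}_R$ be tight for each $R$, which is exactly \textbf{a)}. For \textbf{d)}, the same subordinator structure is decisive: conditionally on $W$, $r\mapsto T^0_r$ has independent increments, and a process with independent increments has no fixed times of discontinuity, so $\mathbb{P}(T^0_r\neq T^0_{r-}\mid W)=0$; integrating over $W$ gives a.s.\ continuity of $T^0_\cdot$ at the fixed point $r$.

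The main obstacle is transferring Proposition 2.2 to the present setting, namely the reflected walk with arbitrary initial mass $r$. The balance identity itself is exact, but reflection at $K$ deposits extra local time near level $K$, so $a\mapsto L^{n,a}_{T^{n,0}_r}$ is not simply the truncation of the unreflected branching process; the task is to confirm that this modified profile still converges to a limit which, being a superposition of independent excursions from $0$ reflected identically, carries the branching property in $r$ and is a.s.\ positive near level $0$. This branching property in the initial mass, rather than any fine spatial behaviour near $K$, is precisely what drives the subordinator arguments underlying \textbf{b)} and \textbf{d)}, and the near-$0$ positivity is what underlies \textbf{a)}.
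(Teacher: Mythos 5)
Your proposal contains a genuine gap, and you in fact name it yourself in your closing paragraph. The entire argument is load-bearing on the claim that the profile $a\mapsto L^{n,a}_{T^{n,0}_r}$ converges weakly to $H_r(a)=e^{-W(a)}\eta_r(\tau^{-1}(a))$: part a) is continuous mapping applied to this convergence, c) follows from a), and b) and d) use the limiting Feller structure. But this claim is first asserted to follow ``verbatim'' from Proposition 2.2 and then retracted, correctly, on the grounds that $S^n$ is reflected at $K$, so its local-time profile is not the profile of the branching process $\widetilde{M}^n$. A proof cannot both rely on the step and defer it as ``the task.'' The gap is fillable, but you must fill it: since $S^n=f(\widetilde{S}^n)$ pathwise and, within completed excursions, downcrossings of an edge match upcrossings, one has the exact folding identity that upsteps of $S^n$ from level $a<K$ are upsteps of $|\widetilde{S}^n|$ from $a$ plus downsteps of $|\widetilde{S}^n|$ from $2K-a$, i.e.\ $L^{n,a}_{T^{n,0}_r}=\widetilde{M}^n_r(a)+\widetilde{M}^n_r(2K-a-\tfrac1n)$, where $\widetilde{M}^n_r$ is the unreflected profile under the first $\lfloor nr\rfloor$ excursions. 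Proposition 2.2 (whose hypotheses from Theorem 2.13 of [3] are indeed insensitive to the initial mass) plus the continuous mapping theorem applied to the folding map, which is continuous at continuous limits, then gives convergence of the reflected profile on $[0,K)$ (note it is genuinely discontinuous at $a=K$ itself, cf.\ Remark 1.5, which is harmless for the integral in your balance identity). Without this paragraph spelled out, a), and hence everything else, is unproved.

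There is a second, unflagged gap in b) and d). For d) you must show that \emph{any} limit point $T^0_\cdot$ of the measure-valued sequence $\{T^{n,0}_\cdot\}_{n\geq1}$ has no fixed discontinuity, and your mechanism for this is the conditional-on-$W$ independent stationary increments of $r\mapsto T^0_r$. Marginal convergence of $T^{n,0}_r$ for each fixed $r$ (which is all that your appeal to Proposition 2.2 can deliver) does not identify the joint law in $r$ of a limit point, so it cannot rule out fixed jumps. To get the subordinator structure you need joint convergence of the profiles over disjoint blocks of excursions together with the conditional independence of excursions given the environment — a multivariate version of Proposition 2.2 that you assert (``carries the branching property in $r$'') but never establish. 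For comparison, the paper discharges all of these obligations by citing Lemma 3.9 of [8] wholesale (with a footnote replacing one ingredient); your route — the exact balance identity plus direct identification of the limit — is more self-contained and would be a genuine alternative, but only once the folding step and the joint-in-$r$ identification are actually proved.
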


\begin{remark}
    The identification of the unique limit of the sequence $\{T^{n,0}_{\cdot}\}_{n \geq 1}$ will be shown in Lemma 3.11 (c).
\end{remark}

Lemma 3.1 is an exact replica of Lemma 3.9 in [8], and the proofs for our model are exactly the same.\footnote{In place of Lemma 2.3 (b) in [2] we appeal to the form of the weak limit of sequence of total mass processes. We carry out an analagous but more complicated argument in the proof of Lemma 3.10.} Therefore, we refer the reader to Lemma 3.9 in [8] for the proof, and turn to convergence of the snake process. In order to show convergence of the snake process we first need to prepare some estimates.

\subsection{Estimates on asymptotics for branching processes in random environments}
In this section we present variants on three results which are stated and proved in [8] for the model considered there. Our results and the proofs thereof are slightly different.

We start with a result on branching processes in fixed environments which is well known. It is proved in [8].

\begin{lemma}
    Let $\{M^n\}_{n\geq 1}$ be a sequence branching process with geometric offspring distribution with parameter $\frac{1}{2}-\frac{b_n}{4n}$. 
    
    Assume $\lim\limits_{n \to \infty}b_n=b$ and $M^n_0=1$ for all $n$. For any $\delta>0$ define
    \[h(b,\delta)= \begin{cases}
        \frac{1}{\delta}, \text{ if $b=0$} \\
        \frac{b}{1-e^{-b\delta}}, \text{ else.}
    \end{cases}
    \]
    Then
    \[
    \lim_{n \to \infty}n\mathbb{P}(M^n_{\lfloor n\delta \rfloor}>0)=h(b,\delta)
    \]
\end{lemma}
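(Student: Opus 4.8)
The plan is to reduce the survival probability to an iterate of the offspring probability generating function (PGF) and to exploit the fact that for a geometric offspring law this PGF is a fractional linear (Möbius) map, whose iterates have a closed form. Write $p_n = \tfrac12 - \tfrac{b_n}{4n}$ and $q_n = 1 - p_n$, and let $\phi_n(s) = \sum_{k\geq 0} q_n^k p_n s^k = \frac{p_n}{1 - q_n s}$ be the PGF of the offspring distribution, which has mean $m_n = \phi_n'(1) = q_n/p_n$. Since $M^n_0 = 1$, the extinction probability at generation $m$ is the $m$-fold composition $\phi_n^{(m)}$ evaluated at $0$, so that $\mathbb{P}(M^n_m > 0) = 1 - \phi_n^{(m)}(0)$. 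The whole lemma is then an explicit computation of this quantity followed by an asymptotic expansion in $n$.

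First I would compute $\phi_n^{(m)}$ exactly. The map $\phi_n$ has the two fixed points $s = 1$ and $s = p_n/q_n = 1/m_n$, with multipliers $\phi_n'(1) = m_n$ and $\phi_n'(1/m_n) = 1/m_n$. Conjugating by the Möbius transformation $g(s) = \frac{s - 1/m_n}{s - 1}$ (equivalently, diagonalising the associated matrix $\left(\begin{smallmatrix} 0 & p_n \\ -q_n & 1 \end{smallmatrix}\right)$, whose eigenvalues are $p_n$ and $q_n$) turns $\phi_n$ into the linear map $z \mapsto m_n^{-1} z$. Iterating $m$ times and inverting $g$, a short computation gives the clean closed form $\mathbb{P}(M^n_m > 0) = 1 - \phi_n^{(m)}(0) = \frac{1 - m_n^{-1}}{1 - m_n^{-(m+1)}}$, which in the critical case $m_n = 1$ reduces to $1/(m+1)$. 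I would sanity-check this against $m = 0$ (value $1$) and $m = 1$ (value $q_n = \mathbb{P}(N>0)$).

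It then remains to insert $m = \lfloor n\delta \rfloor$ and let $n \to \infty$. From $m_n = \frac{1 + b_n/(2n)}{1 - b_n/(2n)}$ one reads off $n(1 - m_n^{-1}) = \frac{b_n}{1 + b_n/(2n)} \to b$ and $\ln m_n = \frac{b_n}{n} + O(n^{-2})$, so that $(m+1)\ln m_n \to \delta b$ and hence $m_n^{-(m+1)} \to e^{-b\delta}$. Multiplying the closed form by $n$ therefore yields $n\,\mathbb{P}(M^n_{\lfloor n\delta \rfloor} > 0) = \frac{n(1 - m_n^{-1})}{1 - m_n^{-(m+1)}} \to \frac{b}{1 - e^{-b\delta}} = h(b,\delta)$ whenever $b \neq 0$.

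The one genuinely delicate point, and the main obstacle, is the degenerate case $b = 0$, where both numerator and denominator of the closed form tend to $0$ and the limit is of the indeterminate form $0/0$. Here I would expand more carefully: using $1 - m_n^{-(m+1)} = (m+1)\ln m_n\,(1 + o(1))$ together with $(m+1)\ln m_n = \delta b_n(1 + o(1))$ and $n(1 - m_n^{-1}) = b_n(1 + o(1))$, the ratio collapses to $\frac{b_n}{\delta b_n}(1 + o(1)) \to \frac{1}{\delta} = h(0,\delta)$, while the special sub-case $b_n = 0$ is handled directly by the critical identity $n/(m+1) \to 1/\delta$. This is exactly the continuous extension of $h$, since $\lim_{b \to 0} \frac{b}{1 - e^{-b\delta}} = \frac{1}{\delta}$, so the two branches in the definition of $h(b,\delta)$ agree in the limit and the proof is complete.
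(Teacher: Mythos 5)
Your proposal is correct, and it is essentially self-contained where the paper is not: the paper's entire proof of this lemma is the citation ``See Lemma 2.1 in [8]'', and the argument you give -- exploiting that the geometric PGF $\phi_n(s)=\frac{p_n}{1-q_ns}$ is a M\"obius map with fixed points $1$ and $1/m_n$, conjugating to a linear map to get the closed form $\mathbb{P}(M^n_m>0)=\frac{1-m_n^{-1}}{1-m_n^{-(m+1)}}$, then expanding -- is exactly the classical route taken in that reference. I verified the key steps: the multiplier computation $\phi_n'(1/m_n)=1/m_n$, the closed form (checked at $m=0,1$ and against the critical iterate $\phi^{(k)}(0)=k/(k+1)$), the limits $n(1-m_n^{-1})\to b$ and $m_n^{-(\lfloor n\delta\rfloor+1)}\to e^{-b\delta}$, and the $0/0$ analysis when $b=0$. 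Two small points worth making explicit in a final write-up: (i) the closed form requires the two fixed points to be distinct, i.e.\ $b_n\neq 0$, so the sequence should be split into indices with $b_n=0$ (handled by the parabolic identity $\phi^{(k)}(0)=k/(k+1)$) and $b_n\neq 0$ (handled by the M\"obius form), both subsequences giving $1/\delta$ when $b=0$; (ii) when $b_n\to 0$ with $b_n\neq0$, the expansion $(m+1)\ln m_n=\delta b_n(1+o(1))$ that you use is justified since $\ln m_n=\frac{b_n}{n}+O(b_n^3/n^3)$, so the cancellation of $b_n$ in $\frac{b_n(1+o(1))}{\delta b_n(1+o(1))}$ is legitimate. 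The argument also goes through unchanged for $b_n<0$ (subcritical case), where $1-e^{-b\delta}<0$ compensates the sign of $b$.
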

\begin{proof}
    See Lemma 2.1 in [8].
\end{proof}

We make the following observations. According to the definition of $\beta^{(n)}_i$ in (1.14),
\begin{equation}
    \beta_i^n=\frac{1}{2}-\frac{1}{4D_n}\ln\left(\frac{1-\beta_i}{\beta_i}\right)+o\left(\frac{1}{D_n^3}\right),
\end{equation}
for $D_n$ introduced in (1.13). Since, by (1.13),
\[
\left\{\frac{1}{D_n}\sum_{i=1}^{\lfloor nt \rfloor}\ln\left(\frac{1-\beta_i}{\beta_i}\right)\; ; \; t\geq0 \right\} \Rightarrow \{W(t)\; ; \; t\geq0\}
\]
in $D[0,\infty)$, a result due to Lamperti in [5] tells us that, necessarily,
\begin{equation}
    D_n^2 \sim n^{2H}L(n)
\end{equation}
for some $0<H<1$ and $L$ slowly varying. We recall $L$ is a slowly varying function if $L(n)\sim L(an)$, for all $a>0$. Lastly, by 1) in (1.24), we see that $\frac{1}{2}<H<1$ for the model we are considering.

The importance of the form (3.1) is that it allows for a version of Lemma 3.2 to be true for the random environment we are considering. Let $\widetilde{M}^n$ be a branching process in the random environment $(\beta^{(n)}_i)_{i\geq1}$ (the branching mechanism is then defined as in (2.4)) such that $\widetilde{M}^n(0)=1$ for all n. Recall the offspring distribution of $\widetilde{M}^n$ at generation $i$ is then geometric with parameter $1-\beta^{(n)}_i$.

\begin{lemma}
    Fix $\delta>0$, then
    \[
    \limsup\limits_{n \to \infty}n\mathbb{P}(\widetilde{M}^n(\lfloor n\delta \rfloor)>0) \leq h(b,\delta)
    \]
\end{lemma}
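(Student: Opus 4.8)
The plan is to condition on the environment and exploit the fact that, conditionally, $\widetilde{M}^n$ is a branching process in a varying environment whose offspring generating functions are M\"{o}bius transformations. First I would fix a realisation of $(\beta^{(n)}_i)_{i\ge1}$. Writing $m^n_l=\beta^{(n)}_l/(1-\beta^{(n)}_l)$ for the conditional mean at generation $l$, the generating function $f_l(s)=(1-\beta^{(n)}_l)/(1-\beta^{(n)}_l s)$ of the Geom$(1-\beta^{(n)}_l)$ law is linear fractional, so its compositions are again linear fractional and can be tracked explicitly. Setting $x_j=1/(1-F_j(0))$, where $F_j$ is the $j$-fold composition, one finds an affine recursion of the form $x_{j-1}=x_j/m^n_j+1$, which unrolls to the exact conditional survival probability
\[
\mathbb{P}\big(\widetilde{M}^n(\lfloor n\delta\rfloor)>0\ \big|\ \text{env}\big)=\left(\sum_{j=0}^{\lfloor n\delta\rfloor}\prod_{l=1}^{j}(m^n_l)^{-1}\right)^{-1}.
\]
Crucially, by the definition (1.14) of $\beta^{(n)}_i$ and of the discrete potential (2.1), $\prod_{l=1}^{j}(m^n_l)^{-1}=\exp(W^n(j/n))$, so the conditional survival probability is exactly $(\sum_{j=0}^{\lfloor n\delta\rfloor}e^{W^n(j/n)})^{-1}$.

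Next I would pass to the scaling limit. Multiplying by $n$ and taking expectations over the environment gives
\[
n\,\mathbb{P}\big(\widetilde{M}^n(\lfloor n\delta\rfloor)>0\big)=\mathbb{E}\left[\left(\tfrac1n\sum_{j=0}^{\lfloor n\delta\rfloor}e^{W^n(j/n)}\right)^{-1}\right].
\]
Since $W^n\Rightarrow W$ in $D(-\infty,\infty)$ with $W$ continuous (assumption 4) in (1.24)), I would invoke Skorokhod's representation theorem to realise $W^n\to W$ uniformly on compacts almost surely, and then recognise $\tfrac1n\sum_{j=0}^{\lfloor n\delta\rfloor}e^{W^n(j/n)}$ as a Riemann sum converging almost surely to $\int_0^\delta e^{W(x)}\,dx$. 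Thus the rescaled conditional survival probability converges to $(\int_0^\delta e^{W(x)}\,dx)^{-1}$, whose randomness mirrors that of the potential.

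The final and hardest step is to convert this random limit into the deterministic bound $h(b,\delta)$. Observe that $h(b,\delta)=(\int_0^\delta e^{-bx}\,dx)^{-1}$ is precisely the reciprocal integral of the deterministic drift environment $-bx$ underlying Lemma 3.2, so the claim amounts to the comparison $\limsup_n \mathbb{E}[(\tfrac1n\sum_j e^{W^n(j/n)})^{-1}]\le(\int_0^\delta e^{-bx}\,dx)^{-1}$. I expect this to be the main obstacle, for two reasons. First, a naive pathwise coupling with a fixed-environment process fails: the per-generation fluctuation of our environment is of order $1/D_n$, which by (3.2) is much larger than the $1/n$ scale of a finite-drift fixed environment, so stochastic monotonicity of survival in the offspring law would force the comparison drift to diverge. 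The comparison must therefore be made at the level of the accumulated potential, matching $W^n(\delta)$ against $-b\delta$ using the mean-zero hypothesis 2) in (1.12) and the moment control in (1.24). Second, since $\tfrac1n\sum_j e^{W^n(j/n)}$ can be small, passing the expectation through the limit requires either uniform integrability of the reciprocal sums or, to secure only the $\limsup$ inequality, a pre-limit bound (via Fatou's lemma after a truncation, or via Lemma 3.2 applied to a dominating deterministic environment) controlling the left tail of $\int_0^\delta e^{W}\,dx$ near $0$.
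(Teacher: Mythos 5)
Your approach is genuinely different from the paper's. The paper never conditions on the environment: it Taylor expands the annealed composed generating function $\mathbb{E}[f_{\beta^{(n)}_1}\circ\cdots\circ f_{\beta^{(n)}_k}(s)]$ around the constant environment $(\tfrac12,\dots,\tfrac12)$, does the same for the fixed-environment process $M^n$ of Lemma 3.2, and then chooses $b_n\to b$ so that the generating-function inequality (3.6) holds, which transfers the survival-probability bound from Lemma 3.2. Your first two steps are correct and exploit the linear-fractional structure more sharply: the recursion $\frac{1}{1-f_l(s)}=\frac{1}{m^n_l}\cdot\frac{1}{1-s}+1$ does unroll to the exact conditional survival probability $\bigl(\sum_{j=0}^{\lfloor n\delta\rfloor}e^{W^n(j/n)}\bigr)^{-1}$, the Riemann-sum limit $\int_0^\delta e^{W(x)}dx$ is valid under assumption 4) of (1.24) via Skorokhod representation, and the identity $h(b,\delta)=\bigl(\int_0^\delta e^{-bx}dx\bigr)^{-1}$ is a nice observation that makes the statement transparent.

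However, there is a genuine gap at the step you yourself flag as the main obstacle, and the routes you sketch for it do not work. Fatou's lemma gives $\liminf_n\mathbb{E}[X_n]\geq\mathbb{E}[\liminf_n X_n]$, which is the wrong direction for bounding a $\limsup$ from above; the reverse inequality needs uniform integrability of $\bigl(\tfrac1n\sum_j e^{W^n(j/n)}\bigr)^{-1}$, and this is not available from the paper's hypotheses: the increments of $W^n$ are of size $O(1/D_n)$ with $n/D_n\to\infty$, so on rare events the reciprocal can be as large as $e^{cn\delta/D_n}$, and ruling this out requires uniform exponential (large-deviation type) control of $\max_{j\leq n\delta}(-W^n(j/n))$, which does not follow from the weak convergence (1.13) and the moment conditions (1.24). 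Pathwise domination by a fixed environment fails for exactly the reason you state. So as written the proposal proves convergence of the conditional survival probabilities but not the annealed bound claimed in the lemma.

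The gap is fixable within your framework, and the fix shows why your route can still be closed without any exponential-moment assumptions: apply the harmonic-mean/arithmetic-mean inequality to get
\begin{equation*}
\left(\frac{1}{n}\sum_{j=0}^{\lfloor n\delta\rfloor}e^{W^n(j/n)}\right)^{-1}\leq \frac{n}{(\lfloor n\delta\rfloor+1)^2}\sum_{j=0}^{\lfloor n\delta\rfloor}e^{-W^n(j/n)},
\end{equation*}
and note that $\mathbb{E}\bigl[e^{-W^n(j/n)}\bigr]=\mathbb{E}\bigl[\prod_{l=1}^{j}m^n_l\bigr]$ is precisely the annealed mean population size, which the computation in Lemma 3.4 (using assumptions 2) and 3) of (1.24)) bounds by $1+C(\delta)$ uniformly in $j\leq n\delta$ and in $n$. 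This gives $\limsup_n n\mathbb{P}(\widetilde{M}^n(\lfloor n\delta\rfloor)>0)\leq (1+C(\delta))/\delta$, which is dominated by $h(b,\delta)$ for a suitable choice of $b$ (as in the paper, $b$ is constructed in the proof rather than given). With this replacement for your final step, your argument is complete and arguably cleaner than the paper's expansion, since it needs only the first-moment estimate of Lemma 3.4 rather than control of all the Taylor coefficients of the composed generating function.
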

\begin{proof}
    Let $s \in [0,1]$. By the branching property,
    \[
    \mathbb{E}[s^{\widetilde{M}^n(k)}]=\mathbb{E}[f_{\beta^{(n)}_1}\circ f_{\beta^{(n)}_2}\circ ...\circ f_{\beta^{(n)}_k}(s)],
    \]
    where $f_{\beta^{(n)}_i}=\frac{1-\beta^{(n)}_i}{1-\beta^{(n)}_is}$, the probability generating function of the offspring distribution at generation $i$, conditioned on the environment. We will derive an expression for the above generating function, and compare it with that of the $M^n(k)$ defined in Lemma 3.2.

        We first fix $s$ and let 
        \[
        \mathbb{E}[f_{\beta^{(n)}_1}\circ f_{\beta^{(n)}_2}\circ ...\circ f_{\beta^{(n)}_k}(s)]=:\mathbb{E}[G(\boldsymbol{\beta}^{(n)})]
        \]
        where $G$ is a function of the vector $\boldsymbol{\beta}^{(n)}$. Taylor expanding $G$ around $(\frac{1}{2},...,\frac{1}{2})$, we see
        \[
        G(\boldsymbol{\beta}^{(n)})=G(\mathbf{\frac{1}{2}})+\sum_{i=1}^k\partial_{\beta^{(n)}_i}G|_{\mathbf{\frac{1}{2}}}\delta_i^{(n)}+\frac{1}{2}\sum_{i,j=1}^k\partial^2_{\beta^{(n)}_i,\beta^{(n)}_j}G|_{\mathbf{\frac{1}{2}}}\delta_i^{(n)}\delta^{(n)}_j+\text{higher order terms}.
        \]
        where, by (3.1),
        \begin{equation}
        \delta^{(n)}_i:=\beta^{(n)}_i-\frac{1}{2}=-\frac{1}{4D_n}\ln\left(\frac{1-\beta_i}{\beta_i}\right)+o\left(\frac{1}{D_n^3}\right).
        \end{equation}
        Hence we have
        \[
        \mathbb{E}[s^{\widetilde{M}^n(k)}]=\mathbb{E}\left[G(\mathbf{\frac{1}{2}})+\sum_{i=1}^k\partial_{\beta^{(n)}_i}G|_{\mathbf{\frac{1}{2}}}\delta_i^{(n)}+\frac{1}{2}\sum_{i,j=1}^k\partial^2_{\beta^{(n)}_i,\beta^{(n)}_j}G|_{\mathbf{\frac{1}{2}}}\delta_i^{(n)}\delta^{(n)}_j\right] + \text{ higher order terms.}
        \]
        It is a simple but tedious exercise in the chain rule to verify that the coefficients of $\delta^{(n)}_i$ and $\delta^{(n)}_i\delta^{(n)}_j$ are bounded above and below independently of k. This follows from the form of $f_{\beta^{(n)}_i}$ and that $s \in [0,1]$. We note that the lower bound may be negative. 
        
        By assumption 2) in (1.12) the leading term in the definition of $\delta^{(n)}_i$ vanishes in expectation. Hence,
        \begin{equation}
        \begin{aligned}
        \mathbb{E}[s^{\widetilde{M}^n(k)}] &\geq G(\mathbf{\frac{1}{2}})+C\sum_{i,j=1}^k\mathbb{E}[\delta_i^n\delta^n_j]+\text{higher order terms} \\
        &=G(\mathbf{\frac{1}{2}})+C'\mathbb{E}\left[\left(\sum_{i=1}^k\delta^n_i\right)^2\right] + \text{higher order terms} \\
        &=G(\mathbf{\frac{1}{2}})+\bar{C}\frac{1}{D_n^2}\mathbb{E}\left[\left(\sum_{i=1}^k\ln\left(\frac{1-\beta_i}{\beta_i}\right)\right)^2\right]+\text{higher order terms}
        \end{aligned}
        \end{equation}
        for some universal constants $C, C', \bar{C} \in \mathbb{R}$. A completely analogous but much simpler computation allows one to obtain (recall $s$ is still arbitrary but fixed)
        \begin{equation}
        \mathbb{E}[s^{M^n(k)}]\leq G(\mathbf{\frac{1}{2}}) + \widehat{C}k\frac{b_n}{n} + \text{higher order terms,}
        \end{equation}
        where again we do not stipulate the sign of $\widehat{C}$.
        
        Now, if we let $k= \lfloor n\delta \rfloor$, then by (3.4) and (3.5),
        \[
        \mathbb{E}[s^{M^n(k)}] \leq \mathbb{E}[s^{\widetilde{M}^n(k)}]
        \]
        holds if and only if 
        \begin{equation}
        \widehat{C}\delta b_n + O\left(\frac{1}{n}\right)\leq \bar{C}\frac{1}{D_n^2}\mathbb{E}\left[\left(\sum_{i=1}^{\lfloor n\delta \rfloor}\ln\left(\frac{1-\beta_i^n}{\beta_i^n}\right)\right)^2\right]+o\left(\frac{1}{D_n^2}\right).
        \end{equation}
        We can ignore the higher order terms on the right hand side of (3.6) according to assumption 3) in (1.24) and Remark (1.3). Moreover, since $\delta$ is fixed, the remaining term is asymptotically finite according to assumption 2) in (1.24).
        
        Hence choosing $b_n \to b \in \mathbb{R}$ such that the inequality (3.6) holds, and taking the corresponding $M^n$ as defined in Lemma 3.2, we see
        \[
        \mathbb{E}[s^{M^n(\lfloor n\delta \rfloor)}] \leq \mathbb{E}[s^{\widetilde{M}^n(\lfloor n\delta \rfloor)}]
        \]
        for all $n$ sufficiently large, and thus 
        \[
        \mathbb{P}(\widetilde{M}^n(\lfloor n\delta \rfloor)>0) \leq \mathbb{P}({M}^n(\lfloor n\delta \rfloor)>0),
        \]
        for all $n$ sufficiently large. Therefore,
        \[
        \limsup_{n \to \infty}\mathbb{P}(\widetilde{M}^n(\lfloor n\delta \rfloor)>0) \leq \limsup_{n \to \infty}\mathbb{P}({M}^n(\lfloor n\delta \rfloor)>0)=h(b,\delta)
        \]
        and we are done.
\end{proof}

\begin{lemma}
    Let $\widetilde{M}^n$ be as above and fix $\delta >0$. Then,
    \[
    \limsup_{n \to \infty}\mathbb{E}[\widetilde{M}^n(\lfloor n\delta \rfloor)] \leq 1+C(\delta)
    \]
    for some $C$ independent of $n$.
\end{lemma}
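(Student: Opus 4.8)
The plan is to control the expected population size $\mathbb{E}[\widetilde{M}^n(\lfloor n\delta \rfloor)]$ by exploiting the branching property and the multiplicative structure of the mean. Since $\widetilde{M}^n$ is a branching process in a random environment with geometric offspring at generation $i$ of mean $m_i^n = \beta_i^{(n)}/(1-\beta_i^{(n)})$, conditioning on the environment $\boldsymbol{\beta}^{(n)}$ gives
\[
\mathbb{E}\left[\widetilde{M}^n(\lfloor n\delta \rfloor) \,\middle|\, \boldsymbol{\beta}^{(n)}\right] = \prod_{i=1}^{\lfloor n\delta \rfloor} m_i^n = \exp\left(\sum_{i=1}^{\lfloor n\delta \rfloor}\ln m_i^n\right).
\]
Taking expectations, the claim reduces to bounding $\mathbb{E}\!\left[\exp\!\big(\sum_{i=1}^{\lfloor n\delta \rfloor}\ln m_i^n\big)\right]$ uniformly in $n$.

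The key step is a Taylor expansion of $\ln m_i^n$ in terms of $\delta_i^{(n)} = \beta_i^{(n)} - \tfrac12$, exactly as in the proof of Lemma 3.3. Writing $m_i^n = \beta_i^{(n)}/(1-\beta_i^{(n)})$ and expanding around $\beta_i^{(n)} = \tfrac12$ (where $m_i^n = 1$), I would obtain $\ln m_i^n = 4\delta_i^{(n)} + O((\delta_i^{(n)})^2)$, and then substitute the expression (3.3), namely $\delta_i^{(n)} = -\tfrac{1}{4D_n}\ln\!\big(\tfrac{1-\beta_i}{\beta_i}\big) + o(D_n^{-3})$. This yields
\[
\sum_{i=1}^{\lfloor n\delta \rfloor}\ln m_i^n = -\frac{1}{D_n}\sum_{i=1}^{\lfloor n\delta \rfloor}\ln\left(\frac{1-\beta_i}{\beta_i}\right) + R_n,
\]
where the leading term is, up to sign, the discrete potential $W^n(\delta)$ from (2.1), which converges in distribution to $W(\delta)$ by assumption (1.13), and $R_n$ collects the quadratic and higher-order remainder. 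The point is that the leading linear term is mean-zero by assumption 2) in (1.12), and the quadratic correction is controlled by the variance $\tfrac{1}{D_n^2}\mathbb{E}[(\sum_{i=1}^{\lfloor n\delta\rfloor}\ln(\tfrac{1-\beta_i}{\beta_i}))^2]$, which is asymptotically finite by assumption 2) in (1.24).

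To pass from the sum to the expectation of its exponential I would use the uniform bound $\beta_i^{(n)} \in (v, 1-v)$ from assumption 1) in (1.12), which forces each $|\delta_i^{(n)}|$ to be bounded and hence each $m_i^n$ to lie in a fixed compact subinterval of $(0,\infty)$, so the remainder $R_n$ is controlled termwise. The main obstacle is handling the exponential of a sum of correlated terms: I cannot simply factor the expectation as in the i.i.d. case. To get around this I would compare against the fixed-environment process $M^n$ from Lemma 3.2 in the same spirit as Lemma 3.3, or alternatively appeal directly to the moment assumption 3) in (1.24) to show that the higher-order contributions to $\mathbb{E}[\exp(R_n)]$ vanish, leaving a limit of the form $\mathbb{E}[e^{-W(\delta)}]$ plus a finite variance correction, which is precisely a finite constant $1 + C(\delta)$. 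Establishing that the correlated remainder terms do not blow up the exponential moment — rather than merely controlling them in the generating-function inequality as in Lemma 3.3 — is the delicate part, and is where assumptions 2) and 3) in (1.24) do the essential work.
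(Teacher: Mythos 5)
Your overall skeleton matches the paper's: condition on the environment so that $\mathbb{E}[\widetilde{M}^n(k)\mid\boldsymbol{\beta}^{(n)}]=\prod_{i=1}^{k}m^n_i$, rewrite the product as an exponential of the normalized potential sum, and then invoke the mean-zero assumption together with assumptions 2) and 3) in (1.24). But you miss the one observation that makes the proof short, and the step where the real work lies is never actually carried out. First, the detour through the per-term expansion of $\ln m^n_i$ in powers of $\delta^{(n)}_i$ and (3.3) is unnecessary \emph{and} creates an obligation you never discharge: by the definition (1.14) one has the \emph{exact} identity
\[
m^n_i=\frac{\beta^{(n)}_i}{1-\beta^{(n)}_i}=\exp\left(-\frac{1}{D_n}\ln\left(\frac{1-\beta_i}{\beta_i}\right)\right),
\]
so $\prod_{i=1}^{k}m^n_i=\exp\bigl(-\frac{1}{D_n}\sum_{i=1}^{k}\ln\bigl(\frac{1-\beta_i}{\beta_i}\bigr)\bigr)$ with no remainder at all; this is exactly how the paper proceeds. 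Your $R_n$ is in fact identically zero, but since you track it only as a termwise $O(\cdot)$, to kill it you would still have to sum $\lfloor n\delta\rfloor$ terms of size $O(D_n^{-3})$ and argue that $n/D_n^{3}\to0$, which requires (3.2) and the positive-correlation assumption forcing $H>\frac12$ --- none of which you invoke; ``controlled termwise'' is not enough when the number of terms grows like $n$.

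Second, and more seriously, the decisive estimate --- bounding $\mathbb{E}[\exp(X_n)]$ where $X_n=-\frac{1}{D_n}\sum_{i=1}^{\lfloor n\delta\rfloor}\ln\bigl(\frac{1-\beta_i}{\beta_i}\bigr)$ is a correlated, mean-zero random variable of order one --- is only gestured at, via two alternatives, neither executed. The comparison with the fixed-environment $M^n$ of Lemmas 3.2/3.3 is delicate for means: the generating-function inequality in Lemma 3.3 is established for a \emph{fixed} $s\in[0,1]$, with constants (hence a choice of $b_n$) depending on $s$, whereas extracting a bound on $\mathbb{E}[\widetilde{M}^n(\lfloor n\delta\rfloor)]$ requires control of the generating functions near $s=1$ uniformly; this is not a routine transfer. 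The moment route is the paper's, but your description of it is garbled: what must be expanded is $\exp(X_n)=1+X_n+\frac{X_n^2}{2}+\dots$ (not ``higher-order contributions to $\mathbb{E}[\exp(R_n)]$''), giving $\mathbb{E}[\exp(X_n)]=1+0+\frac{1}{2D_n^2}\mathbb{E}\bigl[\bigl(\sum_{i=1}^{\lfloor n\delta\rfloor}\ln\bigl(\frac{1-\beta_i}{\beta_i}\bigr)\bigr)^2\bigr]+o(1)\leq 1+C(\delta)$, using mean zero for the linear term, assumption 2) in (1.24) for the quadratic term, and assumption 3) in (1.24) (with $j=3$, Remark 1.3) for all terms of order three and higher. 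No appeal to the weak limit $W$ is needed or wanted: your closing claim that one is left with ``$\mathbb{E}[e^{-W(\delta)}]$ plus a finite variance correction'' conflates two different quantities, and passing to $\mathbb{E}[e^{-W(\delta)}]$ would in any case require uniform integrability of $e^{X_n}$, which you have not established. The bound lives entirely at the level of prelimit moments.
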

\begin{proof}
    By the branching property and the fact that $\widetilde{M}^n(0)=1$,
    \[
    \mathbb{E}[\widetilde{M}^n(k)]=\mathbb{E}\left[\prod_{i=1}^k\frac{\beta_i^{(n)}}{1-\beta_i^{(n)}}\right]
    \]
    According to the definition of $\beta^{(n)}_i$ in (1.14),
    \[
    \begin{aligned}
        \prod_{i=1}^k\frac{\beta_i^{(n)}}{1-\beta_i^{(n)}}&=\exp\left[{-\frac{1}{D_n}\sum_{i=1}^k\ln\left(\frac{1-\beta_i}{\beta_i}\right)}\right] \\
        &=1-\frac{1}{D_n}\sum_{i=1}^k\ln\left(\frac{1-\beta_i}{\beta_i}\right)+\frac{1}{2D_n^2}\left(\sum_{i=1}^k\ln\left(\frac{1-\beta_i}{\beta_i}\right)\right)^2 + o\left(\frac{1}{D_n^3}\right),
    \end{aligned}
    \]
    and so again by assumption 2) in (1.12), 
    \[
    \mathbb{E}[\widetilde{M}^n(k)]=1+\frac{1}{2D_n^2}\mathbb{E}\left[\left(\sum_{i=1}^k\ln\left(\frac{1-\beta_i}{\beta_i}\right)\right)^2\right],
    \]
    holds where we have omitted the higher order terms by assumption 3) in (1.24) and Remark 1.3. Setting $k=\lfloor n\delta \rfloor$ and recalling assumption 2) in (1.24) gives the desired result.
\end{proof}

\begin{lemma}
    Let $X^n$ be defined as in (1.23). Let $f$ be a bounded, non-negative measurable function and let $\delta>0$. Then
    \[
    \mathbb{E}[X^n_{\lfloor n\delta \rfloor/n}(f)] \leq X^n_0(P_{\delta}f)(1+C(\delta))
    \]
    for some $C$ independent of $n$, where $(P_t)_{t\geq0}$ is the heat semigroup.
\end{lemma}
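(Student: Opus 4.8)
The plan is to reduce the estimate to a first-moment (\emph{many-to-one}) computation for the branching Brownian motion of (1.23) and then to import the total-mass bound of Lemma 3.4. The only genuinely probabilistic input is Lemma 3.4; the spatial contribution factors out cleanly because, by construction, the driving Brownian motions are independent of both the branching mechanism and the environment $(\beta^{(n)}_i)_{i\geq1}$. I would work directly with the particle picture in (1.23) rather than the snake representation (1.31).

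Set $k=\lfloor n\delta\rfloor$, let $N_k$ be the number of particles alive at time $k/n$, and let $\{X_p\}$ be their positions, so that $X^n_{k/n}(f)=\tfrac1n\sum_p f(X_p)$. The first step is to establish the identity
\[
\mathbb{E}\big[X^n_{k/n}(f)\big]=X^n_0\big(P_{k/n}f\big)\,\mathbb{E}\big[\widetilde{M}^n(k)\big].
\]
To prove it I would condition on the environment $\boldsymbol{\beta}^{(n)}$ and on the genealogical tree. Given these, a particle alive at time $k/n$ has travelled along its ancestral line a total Brownian time of exactly $k/n$, so (with every initial particle at the origin and $X^n_0(1)=1$) its position has law $P_{k/n}(0,\cdot)$ and $\mathbb{E}[f(X_p)\mid\text{tree},\boldsymbol{\beta}^{(n)}]=(P_{k/n}f)(0)$. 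Since $N_k$ is a function of the tree, summing gives $\mathbb{E}[\sum_p f(X_p)\mid\text{tree},\boldsymbol{\beta}^{(n)}]=N_k\,(P_{k/n}f)(0)$; the mutual correlations between distinct particles coming from shared ancestry are irrelevant because a first moment needs only the one-particle marginal. Taking expectation over the tree and using $\tfrac1n\mathbb{E}[N_k\mid\boldsymbol{\beta}^{(n)}]=\mathbb{E}[\widetilde{M}^n(k)\mid\boldsymbol{\beta}^{(n)}]$, then averaging over the environment (the spatial factor $(P_{k/n}f)(0)=X^n_0(P_{k/n}f)$ being deterministic and independent of $\boldsymbol{\beta}^{(n)}$), yields the displayed identity. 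The bound then follows by applying Lemma 3.4 with this $k$: since $\limsup_n \mathbb{E}[\widetilde{M}^n(\lfloor n\delta\rfloor)]\leq 1+C(\delta)$ and each term is finite, after enlarging $C(\delta)$ to absorb the finitely many small $n$ we obtain $\mathbb{E}[\widetilde{M}^n(\lfloor n\delta\rfloor)]\leq 1+C(\delta)$ for all $n$, whence
\[
\mathbb{E}\big[X^n_{\lfloor n\delta\rfloor/n}(f)\big]\leq X^n_0\big(P_{\lfloor n\delta\rfloor/n}f\big)\,(1+C(\delta)).
\]

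The substantive content is entirely in Lemma 3.4, so the ``hard part'' here is really only bookkeeping: justifying the many-to-one reduction rigorously, and the fact that the heat semigroup naturally appears at the discrete time $\lfloor n\delta\rfloor/n$ rather than at $\delta$. Since $\lfloor n\delta\rfloor/n\uparrow\delta$, this discrepancy is immaterial in the later applications of the lemma, and (as $f\geq0$ keeps all quantities non-negative) I would either record the estimate with $P_{\lfloor n\delta\rfloor/n}f$ throughout or simply identify the two in the limit, which is how the bound will be used.
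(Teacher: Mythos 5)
Your proposal is correct and takes essentially the same route as the paper: both arguments factor the first moment as $\mathbb{E}[X^n_{\lfloor n\delta\rfloor/n}(f)] = X^n_0(P_{\lfloor n\delta\rfloor/n}f)\,\mathbb{E}\left[\prod_{i=1}^{\lfloor n\delta\rfloor} m^n_i\right]$ using the independence of the spatial motion from the branching and the environment --- the paper by iterating the one-step conditioning $\mathbb{E}[X^n_{k/n}(f)]=\mathbb{E}[m^n_k X^n_{(k-1)/n}(P_{1/n}f)]$, you by conditioning on the full genealogy and environment at once --- and then bound the mass factor by $1+C(\delta)$ via the content of Lemma 3.4 and assumptions (1.24). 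If anything, your explicit handling of upgrading the $\limsup$ bound of Lemma 3.4 to an all-$n$ bound, and of the $P_{\lfloor n\delta\rfloor/n}$ versus $P_{\delta}$ discrepancy, is more careful than the paper's, which passes over both points silently.
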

\begin{proof}
    Let $N^n_{i,k}$ denote the number of offspring of the $i$th individual at time $t=\frac{k}{n}$. That is, the particle is born at time $t=\frac{k-1}{n}$ and reproduces a random number of offspring at time $t=\frac{k}{n}$ with distribution Geom($1-\beta^n_k$). 

    Let $\mathcal{U}_{i,k-1}({\frac{j}{n}})$ be the spatial position of this particle at time $t=\frac{j}{n}$, $k-1 \leq j \leq k$. Let $m^n_k=\mathbb{E}[N^n_{i,k}]=\frac{\beta^n_k}{1-\beta^n_k}$. Let $\widetilde{M}^n(k)$ be the number of particles alive at time $t=\frac{k}{n}$. Then,
    \[
    X^n_{{k}/{n}}(f)=\frac{1}{n}\sum_{i=1}^{\widetilde{M}^n(k-1)}N^n_{i,k}f\left(\mathcal{U}_{i,k-1}\left(\frac{k}{n}\right)\right)
    \]
    Hence
    \[
    \begin{aligned}
    \mathbb{E}[X^n_{{k}/{n}}(f)]&=\mathbb{E}\left[\frac{1}{n}\sum_{i=1}^{\widetilde{M}^n(k-1)}N^n_{i,k}f\left(\mathcal{U}_{i,k-1}(\frac{k}{n})\right)\right] \\
    &=\mathbb{E}\left[\mathbb{E}\left[\frac{1}{n}\sum_{i=1}^{\widetilde{M}^n(k-1)}N^n_{i,k}f\left(\mathcal{U}_{i,k-1}(\frac{k}{n})\right)\Bigg|\beta^{(n)}_k,X^n_{({k-1})/{n}}\right]\right] \\
    &=\mathbb{E}\left[m^n_k\mathbb{E}\left[\frac{1}{n}\sum_{i=1}^{\widetilde{M}^n(k-1)}f\left(\mathcal{U}_{i,k-1}(\frac{k}{n})\right)\Bigg|\beta^{(n)}_k,X^n_{({k-1})/{n}}\right]\right] \\
    &=\mathbb{E}[m^n_kX^n_{({k-1})/{n}}\left(P_{{1}/{n}}(f)\right)]
    \end{aligned}
    \]
    where in the third equality we used the independence of the spatial motion and the environment.
    Iterating this relation we see,
    \[
    \begin{aligned}
    \mathbb{E}[X^n_{{k}/{n}}(f)]&=\mathbb{E}\left[\prod_{i=1}^km^n_iX^n_0\left(P_{{k}/{n}}(f)\right)\right] \\
    &=X^n_0\left(P_{{k}/{n}}(f)\right)\mathbb{E}\left[\prod_{i=1}^km^n_i\right] \\
    &\leq X^n_0\left(P_{{k}/{n}}(f)\right)\left(1+C(k)\right).
    \end{aligned}
    \]
    As in the proof of Lemma 3.4, in the last line we again used assumptions 2 and 3 in (1.24) and Remark 1.3 to obtain the constant $C(k)$ independent of $n$. Setting $k=\lfloor n\delta \rfloor$ gives the desired result.
\end{proof}

\subsection{The snake process}

 We turn our attention to showing C-tightness of the sequence $\{\mathbb{W}^n\}_{n\geq1}$ in $D_{\mathcal{W}}[0,\infty)$. That is, we wish to show for any $T>0, \text{ and }a>0$,
 \begin{equation}
     \lim_{\delta_1 \to 0}\limsup_{n \to \infty}\mathbb{P}(\sup_{0 \leq t \leq T}\sup_{\delta\leq\delta_1}d(\mathbb{W}^n_{t+\delta},\mathbb{W}^n_t)>a)=0
 \end{equation}
 where $d$ is the metric on the space $\mathcal{W}$ defined in Section 1.4.

 According to the definition of $d$, we need to control 
 \begin{equation}
 \begin{aligned}
 &\mathbb{P}(\sup_{0 \leq t \leq T}\sup_{\delta\leq\delta_1}|S^n_{t+\delta}-S^n_t|>a), \text{ and} \\
 &\mathbb{P}(\sup_{0 \leq t \leq T}\sup_{\delta\leq\delta_1}\sup_{s \geq0}|\mathbf{W}^n_{t+\delta}(s)-\mathbf{W}^n_t(s)|>a).
 \end{aligned}
 \end{equation}

 We shall show C-tightness of the contour processes $\{S^n\}_{n\geq1}$ in $D[0,\infty)$, and control the second of these two terms. Tightness of the sequence of snake processes will follow.

 \begin{proposition}
     The sequence $\{S^n\}_{n\geq1}$ is C-tight in $D[0,\infty)$. Moreover, the sequence converges to a unique limit, $S$.
 \end{proposition}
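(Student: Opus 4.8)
The plan is to deduce the proposition directly from Proposition 2.1 via the continuous mapping theorem, so that essentially no new probabilistic estimate is needed; the entire content reduces to a continuity property of the folding map $f$. Recall from Proposition 2.1 that $\widetilde{S}^n \Rightarrow Y$ in $D[0,\infty)$, where $Y$ is the $W$-associated process, and that $Y$ is almost surely continuous. Since $S^n = f(\widetilde{S}^n)$, the result will follow once we know that $f$ is continuous at continuous paths.

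First I would record that $f$ acts by pointwise composition with a scalar folding map. Writing $g:\mathbb{R}\to\mathbb{R}$ for the function $g(y)=|y|$ when $|y|\le K$ and $g(y)=2K-|y|$ when $|y|>K$, the definition (1.25) says precisely $f(x)(t)=g(x(t))$ for every path $x$ and every $t$. The map $g$ is continuous and $1$-Lipschitz on all of $\mathbb{R}$ (the only kinks are at $0$ and at $\pm K$, where the one-sided slopes are $\pm 1$). The only genuine point to check is that left-composition with such a $g$ is continuous on $D[0,\infty)$ at every continuous path, and this is standard: if $x_m\to x$ in the Skorohod topology with $x\in C[0,\infty)$, then $x_m\to x$ locally uniformly, and the Lipschitz bound $|g(a)-g(b)|\le|a-b|$ gives
\[
\sup_{t\le T}|g(x_m(t))-g(x(t))|\le\sup_{t\le T}|x_m(t)-x(t)|\to 0
\]
for every $T>0$, so $f(x_m)=g\circ x_m\to g\circ x=f(x)$ locally uniformly and hence in $D[0,\infty)$. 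Thus $f$ is continuous at every point of $C[0,\infty)$.

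Since $\mathbb{P}(Y\in C[0,\infty))=1$ and $f$ is continuous on $C[0,\infty)$, the continuous mapping theorem applied to Proposition 2.1 yields
\[
S^n=f(\widetilde{S}^n)\;\Rightarrow\;f(Y)=:S
\]
in $D[0,\infty)$. Because $g$ is continuous and $Y$ is a.s. continuous, $S=g\circ Y$ is a.s. continuous; combined with the weak convergence just established, this gives that $\{S^n\}_{n\geq 1}$ is tight with a continuous limit, i.e. C-tight. Uniqueness of the limit is inherited from the uniqueness of $Y$: as $S=f(Y)$ is a deterministic functional of $Y$, its law is uniquely determined, and one checks this limit coincides with the process $S$ reflected at $0$ and $K$ appearing in Theorem~1.3. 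The argument presents no serious obstacle; the only step requiring care is the Skorohod-continuity of $f$, which is why I reduce convergence to a continuous limit to locally uniform convergence and exploit that $g$ is Lipschitz. If one prefers to bypass the continuous mapping theorem, the same conclusion follows by passing (via Skorohod's representation theorem) to a space on which $\widetilde{S}^n\to Y$ almost surely and applying the Lipschitz estimate pathwise.
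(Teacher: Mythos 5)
Your proposal is correct and follows essentially the same route as the paper: both deduce the result from Proposition 2.1 via the continuous mapping theorem, after observing that $f$ is pointwise composition with a Lipschitz scalar folding map $g$. Your treatment is in fact slightly more careful than the paper's, since you verify Skorohod-continuity of $x \mapsto g\circ x$ at continuous limit paths (via locally uniform convergence) rather than asserting it outright, and you state $g$ correctly on both branches where the paper's displayed formula only records the case $|y|>K$.
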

 \begin{proof}
     Recall $S^n_t=f(\widetilde{S}^n)(t)$, where $f$ is defined in (1.25). Recall weak convergence of $\{\widetilde{S}^n\}_{n\geq1}$ to $Y$, the $W$-associated process was shown in Proposition 2.1. Thus it suffices to show $f$ is a continuous function in the Skorohod topology.

    First note that for any $x\in D[0,\infty)$,
    \[
    f(x)(t)=g \circ x(t),
    \]
    where
    \[
    g:\mathbb{R}\to \mathbb{R}, \; y \mapsto K-(|y|-K).
    \]
    Continuity of $f$ now follows from (Lipschitz) continuity of $g$, and the definition of convergence in the Skorohod topology.

    Hence the sequence $\{S^n\}_{n\geq1}$ converges weakly in $D[0,\infty)$ to $S:=f(Y)$, the $W$-associated process reflected at 0 and $K$. Moreover, $t\mapsto S_t$ is continuous and hence the sequence $\{S^n\}_{n\geq1}$ is C-tight.
 \end{proof}
 
 To control the second of the two terms in (3.8) we need to bound the maximal displacement of a particle from its ancestor on a time interval of length $\delta$.

 Having shown Lemmas 3.3 and 3.5 for our model (which are analogues of Lemmas 2.2, and 2.4 in [8]) we can bound the maximal displacement of a particle from its ancestor on a time interval of length $\delta$ in exactly the same way as in [8].

 Fix $\eta \in (0,\frac{1}{4})$. Let 
 \[
 \begin{aligned}
 Z^{n,\eta}_{a,\delta}= \#\{&\text{particles alive at time $a_n+\delta$ that are displaced by more than $\delta^{\frac{1}{2}-\eta}$} \\ &\text{ from the ancestor at time $a_n$}\},
 \end{aligned}
\]
where $a_n=\frac{\lfloor an \rfloor}{n}$. Using Lemmas 3.3 and 3.5 in place of Lemmas 2.2 and 2.4 in [8] and appealing to exactly the same proof, we can show
\begin{lemma}
     For any $\epsilon>0$ there exists $\delta_1>0$ such that,
     \[
     \limsup_{n \to \infty}\mathbb{P}(\sup_{a \leq K}\sup_{\delta < \delta_1}Z^{n,2\eta}_{a,\delta}>0) \leq \epsilon
     \]
\end{lemma}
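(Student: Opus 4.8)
The plan is to follow the argument of [8] line by line, feeding in Lemma 3.3 (the survival asymptotics $n\mathbb{P}(\widetilde{M}^n(\lfloor n\delta\rfloor)>0)\to h(b,\delta)$) wherever [8] invokes its Lemma 2.2, and Lemma 3.5 (the first-moment bound $\mathbb{E}[X^n_{\lfloor n\delta\rfloor/n}(f)]\le X^n_0(P_\delta f)(1+C(\delta))$) wherever [8] invokes its Lemma 2.4. The whole estimate is a first-moment bound, but the object that must be counted is not the individual far-moving particles. Indeed, a first-moment (many-to-one) computation shows that the expected number of particles alive at level $a_n+\delta$ displaced by more than $\delta^{1/2-2\eta}$ from their level-$a_n$ ancestor is of order $n\,\mathbb{E}[\widetilde{M}^n(\lfloor n\delta\rfloor)]\,p(\delta)$, where $p(\delta)$ is the probability that a single spatial lineage is displaced by more than $\delta^{1/2-2\eta}$ over branching-time $\delta$; since there are $\Theta(n)$ founders at level $a_n$ and $\mathbb{E}[\widetilde{M}^n(\lfloor n\delta\rfloor)]=\Theta(1)$ by Lemma 3.4, this first moment is of order $n\,p(\delta)$ and diverges for fixed $\delta$. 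The correct quantity is the number of ancestral subtrees rooted at level $a_n$ that contain a far descendant at level $a_n+\delta$.

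First I would reduce the continuum double supremum $\sup_{a\le K}\sup_{\delta<\delta_1}$ to events on a dyadic grid. Working at scales $\delta\in[2^{-(k+1)},2^{-k}]$ and on the level grid $\{j2^{-k}:0\le j\le K2^k\}$, a displacement exceeding $\delta^{1/2-2\eta}$ over an arbitrary interval $[a_n,a_n+\delta]$ forces a displacement exceeding $(2^{-k})^{1/2-\eta}$ over a neighbouring grid interval of length $2^{-k}$, because $\delta^{1/2-2\eta}\ge c\,(2^{-k})^{1/2-2\eta}\ge (2^{-k})^{1/2-\eta}$ for $2^{-k}$ small. This is exactly the reason the statement is phrased with $2\eta$ while the definition of $Z^{n,\eta}_{a,\delta}$ carries $\eta$: the gap between the two exponents absorbs the loss incurred in passing from arbitrary $(a,\delta)$ to the grid, and it renders the eventual sum over scales and grid points summable.

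For a fixed grid level $a$ and scale $\delta=2^{-k}$ I would then bound $\mathbb{P}(Z^{n,\eta}_{a,\delta}>0)$ by the expected number of level-$a_n$ subtrees that both reach level $a_n+\delta$ and contain a descendant displaced by more than $\delta^{1/2-\eta}$. This expectation factorises as (number of founders) times $\mathbb{P}(\text{a single subtree has such a descendant})$, and it is here that Lemmas 3.3 and 3.5 enter together: the event requires the subtree to survive $\lfloor n\delta\rfloor$ generations, which by Lemma 3.3 costs a factor $O(1/(n\delta))$, and it requires a spatial excursion of size $\delta^{1/2-\eta}$, which by the reflection principle for the appended Brownian pieces (of total duration $\delta$) together with the heat-kernel first moment of Lemma 3.5 costs a factor $\exp(-c\delta^{-2\eta})$. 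Multiplying, the single-subtree probability is $O(\tfrac{1}{n\delta}\exp(-c\delta^{-2\eta}))$, and multiplying by the $\Theta(n)$ founders gives a bound of order $\tfrac{1}{\delta}\exp(-c\delta^{-2\eta})$ that is independent of $n$.

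The main obstacle is precisely this single-subtree estimate, namely ensuring that the factor $n$ coming from the founders is exactly compensated, so that the $\limsup_{n}$ survives. A naive first moment on far-moving particles over-counts, since once a surviving cluster wanders far many of its members are far together; the remedy is to count far subtrees and to pair the $O(1/n)$ survival probability of Lemma 3.3 with the Gaussian displacement bound, rather than multiplying the per-lineage displacement probability by the (order $n\delta$) cluster size. This is the discrete analogue of the compact-support property of super-Brownian motion, and it is carried out exactly as in [8]. Having obtained the uniform-in-$n$ bound $\tfrac{1}{\delta}\exp(-c\delta^{-2\eta})$ at each dyadic scale, summing over the $K2^k$ grid levels and over $k\ge\log_2(1/\delta_1)$ yields a total of order $\sum_{k\ge\log_2(1/\delta_1)}K\,2^{2k}\exp(-c\,2^{2\eta k})$, which tends to $0$ as $\delta_1\to0$; choosing $\delta_1$ small enough makes it at most $\epsilon$, which proves the lemma.
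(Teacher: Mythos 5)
Your proposal takes the same approach as the paper: the paper's entire proof of this lemma is the citation ``See Lemma 4.4 in [8]'', preceded by the statement that Lemmas 3.3 and 3.5 are to be used in place of Lemmas 2.2 and 2.4 of [8] ``appealing to exactly the same proof'', which is precisely your plan. Your reconstruction of the internals of [8] is also sound in outline (counting surviving subtrees rather than displaced particles, pairing the $O(1/(n\delta))$ survival cost from Lemma 3.3 with the Gaussian displacement tail, and the dyadic grid reduction that trades the exponent $\eta$ for $2\eta$); the one step you state loosely --- ``multiplying'' the survival probability by $\exp(-c\delta^{-2\eta})$, which is not an independence factorisation (displacement implies survival, and conditionally on survival the cluster has order $n\delta$ particles) but rather the output of the tree-chaining argument in [8] --- is exactly the step you, like the paper, delegate wholesale to [8], so nothing is missing relative to the paper's own level of detail.
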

\begin{proof}
See Lemma 4.4 in [8].
\end{proof}

We then obtain as an immediate corollary, for arbitrary $r>0$,

\begin{lemma}
    For any $\epsilon > 0$ there exists $\delta_1$ such that
    \[
    \limsup_{n\to \infty}\mathbb{P}(\sup_{t \leq T^{n,0}_r}\sup_{\delta \leq \delta_1}\sup_{s \leq (S^n_t-\delta)_{+}}|\mathbf{W}^n_t(s+\delta)-\mathbf{W}^n_t(s)|>\delta_1^{\frac{1}{2}-2\eta})<\epsilon.
    \]
\end{lemma}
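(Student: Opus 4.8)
The plan is to realise the event in the statement as a sub-event of the event controlled in Lemma 3.7, so that the probability bound transfers directly. The guiding observation is the snake interpretation of the increments $\mathbf{W}^n_t(s+\delta)-\mathbf{W}^n_t(s)$. By construction (1.27) the path $\mathbf{W}^n_t$ traces out, one ancestral line at a time, the spatial trajectories of the particles in the genealogical forest coded by the excursions of $S^n$; moreover each upstep of $S^n$ appends a Brownian segment run for time $\tfrac1n$, so that the height parameter $s$ of $\mathbf{W}^n_t(s)$ coincides with branching-process time. Hence an increment $\mathbf{W}^n_t(s+\delta)-\mathbf{W}^n_t(s)$ is exactly the spatial displacement, over a time window of length $\delta$, of the ancestral line currently being traced, and a large such increment witnesses a particle (the terminal point) displaced far from its ancestor $\delta$ units of time earlier.

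I would then match the three ingredients of the two events. First, restricting to $t\le T^{n,0}_r$ restricts attention to the lineages in the first excursions of $S^n$; since $S^n=f(\widetilde S^n)$ is reflected at $0$ and $K$ we have $s+\delta\le S^n_t\le K$, so the branching time $a$ at which the displacement is witnessed lies in $[0,K]$, matching the range $a\le K$ in Lemma 3.7. Second, because $\delta\le\delta_1$ and $\eta\in(0,\tfrac14)$ gives $\tfrac12-2\eta>0$, we have $\delta_1^{1/2-2\eta}\ge\delta^{1/2-2\eta}$, so an increment exceeding $\delta_1^{1/2-2\eta}$ a fortiori exceeds the threshold $\delta^{1/2-2\eta}$ appearing in the definition of $Z^{n,2\eta}_{a,\delta}$. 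Third, the terminal particle of the overshooting lineage is then displaced by more than $\delta^{1/2-2\eta}$ from its ancestor, so $Z^{n,2\eta}_{a,\delta}>0$ for the corresponding pair $(a,\delta)$ with $a\le K$ and $\delta<\delta_1$.

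Combining these, the event in the statement is contained in $\{\sup_{a\le K}\sup_{\delta<\delta_1}Z^{n,2\eta}_{a,\delta}>0\}$, whence
\[
\mathbb{P}\Big(\sup_{t\le T^{n,0}_r}\sup_{\delta\le\delta_1}\sup_{s\le(S^n_t-\delta)_+}|\mathbf{W}^n_t(s+\delta)-\mathbf{W}^n_t(s)|>\delta_1^{1/2-2\eta}\Big)\le\mathbb{P}\Big(\sup_{a\le K}\sup_{\delta<\delta_1}Z^{n,2\eta}_{a,\delta}>0\Big).
\]
Taking $\limsup_{n\to\infty}$ and applying Lemma 3.7 produces a $\delta_1$ for which the right-hand side is below $\epsilon$, which is the claim.

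I expect the only genuine subtlety to be the discretisation step linking the continuous supremum over snake heights $s$ to the grid-indexed count $Z^{n,2\eta}_{a,\delta}$, whose ancestor time $a_n=\lfloor an\rfloor/n$ is a multiple of $\tfrac1n$. For $s$ not on the grid one splits the increment at the nearest lower grid point $a_n=\lfloor ns\rfloor/n\le s$, bounding $|\mathbf{W}^n_t(s+\delta)-\mathbf{W}^n_t(s)|$ by displacements measured from $a_n$ over a window of length at most $\delta+\tfrac1n$; the $O(\tfrac1n)$ enlargement of the window and the factor lost to the triangle inequality are immaterial in the limit and are exactly the slack that the gap between the exponents $\tfrac12-\eta$ and $\tfrac12-2\eta$ is designed to absorb. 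This is precisely the mechanism used in the proof of Lemma 4.4 in [8], and the remaining bookkeeping is routine.
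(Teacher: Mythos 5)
Your proposal is correct in substance and follows exactly the route the paper intends: the paper offers no proof of this lemma at all, simply declaring it an immediate corollary of the count lemma (Lemma 3.6 of the paper, which your write-up calls ``Lemma 3.7''), and your event-containment argument is the intended derivation. One step, however, would fail as literally written, and your closing paragraph only half-repairs it. The displayed inclusion into $\{\sup_{a\le K}\sup_{\delta<\delta_1}Z^{n,2\eta}_{a,\delta}>0\}$ is valid only when the witnessing time $s$ lies on the grid $\tfrac{1}{n}\mathbb{Z}$, because the ancestor time in $Z^{n,2\eta}_{a,\delta}$ is by definition the grid point $a_n=\lfloor an\rfloor/n$. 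For general $s$, your triangle-inequality split at $\lfloor ns\rfloor/n$ produces a displacement from a grid ancestor exceeding $\tfrac12\delta_1^{1/2-2\eta}$ over some window $\delta'\le\delta_1+\tfrac1n$; but since $\delta'$ can be essentially as large as $\delta_1$, the threshold $(\delta')^{1/2-2\eta}$ can itself exceed $\tfrac12\delta_1^{1/2-2\eta}$, so this does \emph{not} witness $Z^{n,2\eta}_{a,\delta'}>0$: the factor $\tfrac12$ cannot be absorbed at the same exponent. What it does witness, once $\delta_1$ is small enough that $(2\delta_1)^{1/2-\eta}\le\tfrac12\delta_1^{1/2-2\eta}$ (equivalently $\delta_1^{-\eta}\ge 2^{3/2-\eta}$, and taking $n\ge 1/\delta_1$, harmless under the limsup), is $Z^{n,\eta}_{a,\delta'}>0$ --- the count at exponent $\eta$, not $2\eta$. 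So the containment you actually obtain is into $\{\sup_{a\le K}\sup_{\delta'\le 2\delta_1}Z^{n,\eta}_{a,\delta'}>0\}$, and to conclude you must invoke the count lemma with $\eta/2$ in place of $\eta$; this is legitimate, since $\eta\in(0,\tfrac14)$ was arbitrary and fixed at the start of Section 3.2, but it is a substitution your write-up never makes --- as written you cite the count lemma only at parameter $2\eta$. This is precisely the role of the exponent gap you allude to in your last paragraph, so the idea is all present; it just has to be carried through to the final citation rather than left as ``routine bookkeeping''.
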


We can now prove C-tightness of the sequence $\{\mathbb{W}^n\}_{n\geq1}$.

\begin{lemma}
    The sequence of processes $\{\mathbb{W}^n\}_{n \geq 1}$ is C-tight in $D_{\mathcal{W}}[0,\infty)$.
\end{lemma}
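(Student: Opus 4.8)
The plan is to verify the C-tightness criterion (3.7) by controlling separately the two contributions to the metric $d$ recorded in (3.8): the lifetime contribution $\sup_{0\le t\le T}\sup_{\delta\le\delta_1}|S^n_{t+\delta}-S^n_t|$ and the path contribution $\sup_{0\le t\le T}\sup_{\delta\le\delta_1}\sup_{s\ge0}|\mathbf{W}^n_{t+\delta}(s)-\mathbf{W}^n_t(s)|$. Since $d$ is the sum of these two quantities, it suffices to make each smaller than $a/2$ with probability at least $1-\epsilon$ by taking $\delta_1$ small and $n$ large. The lifetime contribution is immediate from Proposition 3.2: C-tightness of $\{S^n\}_{n\ge1}$ is exactly the statement that its modulus of continuity on $[0,T]$ vanishes in the relevant double limit.

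The path contribution is the substance of the proof, and the key structural input is the erase/append dynamics of the discrete snake in (1.27). First I would localize in time using Lemma 3.1(b): given $\epsilon$, choose $R$ so that $\mathbb{P}(T^{n,0}_R>T)\ge1-\epsilon$ for all large $n$, so that on this event the whole window $[0,T]$ lies inside $[0,T^{n,0}_R]$ and Lemma 3.7 becomes available. Next I would exploit the snake property: each of the two operations in (1.27) leaves the path unchanged below the current lifetime, so for any $t$ and $\delta$ the paths $\mathbf{W}^n_t$ and $\mathbf{W}^n_{t+\delta}$ coincide on $[0,m]$, where $m:=\inf_{t\le u\le t+\delta}S^n_u$. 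Writing, for $s\ge m$,
\[
|\mathbf{W}^n_{t+\delta}(s)-\mathbf{W}^n_t(s)|\le|\mathbf{W}^n_{t+\delta}(s)-\mathbf{W}^n_{t+\delta}(m)|+|\mathbf{W}^n_t(s)-\mathbf{W}^n_t(m)|,\qquad\text{since }\mathbf{W}^n_{t+\delta}(m)=\mathbf{W}^n_t(m),
\]
reduces the problem to the internal modulus of continuity of each single path over the spatial interval $[m,S^n_{t+\delta}]$, respectively $[m,S^n_t]$ (recall each path is constant beyond its own lifetime, so values $s$ larger than a lifetime contribute no new increment).

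The two remaining ingredients then combine as follows. By Proposition 3.2 the oscillation $\sup_{0\le t\le T}\sup_{\delta\le\delta_1}\mathrm{osc}_{[t,t+\delta]}S^n$ can be made smaller than any prescribed $\rho>0$ with high probability by taking $\delta_1$ small; on that event both $S^n_t-m$ and $S^n_{t+\delta}-m$ are at most $\rho$, so each of the two terms above is an increment of a single path over a spatial distance at most $\rho$. Choosing $\rho=\delta_1^{(L)}$ equal to the scale furnished by Lemma 3.7 (applied with the parameter $r=R$), that lemma bounds each such increment by $(\delta_1^{(L)})^{1/2-2\eta}$ uniformly in $t\le T^{n,0}_R$, so the path contribution is at most $2(\delta_1^{(L)})^{1/2-2\eta}$ on the intersection of the three good events. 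Since $\eta<\tfrac14$, the exponent $\tfrac12-2\eta$ is positive and this bound tends to $0$ as $\delta_1^{(L)}\to0$; hence it can be made $<a/2$, and the total exceptional probability is at most $3\epsilon$, which proves (3.7). I expect the main obstacle to be the bookkeeping of this last paragraph—matching the oscillation scale of $S^n$ to the spatial scale at which Lemma 3.7 gives control, while simultaneously keeping the time argument inside $[0,T^{n,0}_R]$—rather than the snake coincidence property itself, which is a direct consequence of (1.27).
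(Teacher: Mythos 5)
Your proposal is correct and follows essentially the same route as the paper's proof: split the metric $d$ into the lifetime and path contributions, dispose of the former via Proposition 3.2, and control the latter by combining the contour oscillation (Proposition 3.2) with the single-path modulus estimate of Lemma 3.7, using Lemma 3.1(b) to keep the time window inside $[0,T^{n,0}_R]$. The only difference is presentational — you make explicit the snake coincidence property and the triangle inequality around the level $m=\inf_{[t,t+\delta]}S^n_u$, which the paper leaves implicit in its displayed probability bound.
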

\begin{proof}
    C-tightness of the contour processes was shown in Proposition 3.2. Hence we only need to control
    \[
    \mathbb{P}(\sup_{0 \leq t \leq T}\sup_{\delta\leq\delta_1}\sup_{s \geq0}|\mathbf{W}^n_{t+\delta}(s)-\mathbf{W}^n_t(s)|>a).
    \]
    Fix arbitrary $\alpha>0$ and let $a=\alpha^{\frac{1}{2}-2\eta}$. We have
    \[
    \begin{aligned}
    &\mathbb{P}(\sup_{0 \leq t \leq T^{n,0}_r}\sup_{\delta\leq\delta_1}\sup_{s \geq0}|\mathbf{W}^n_{t+\delta}(s)-\mathbf{W}^n_t(s)|>a) \leq \\ &\mathbb{P}(\sup_{0 \leq t \leq T^{n,0}_r}\sup_{\delta\leq\delta_1}|S^n_{t+\delta}-S^n_t|>\alpha) \\ &+\mathbb{P}(\sup_{0 \leq t \leq T^{n,0}_r}\sup_{\delta \leq \alpha}\sup_{s \leq (S^n_t-\delta)_{+}}|\mathbf{W}^n_t(s+\delta)-\mathbf{W}^n_t(s)|>a)
    \end{aligned}
    \]
    The result now follows from C-tightness of the contour processes, Lemma 3.7 and Lemma 3.1 (b).
\end{proof}

Since we know the sequence of contour processes are not only C-tight, but also converge weakly to a unique limiting process $S$ (see Proposition 3.2), we immediately obtain

\begin{lemma}
    The sequence $\{\mathbb{W}^n\}_{n \geq 1}$ converges weakly in $D_{\mathcal{W}}[0,\infty)$ to a unique limiting process $\mathbb{W}$.
\end{lemma}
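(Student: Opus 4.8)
The plan is to combine the C-tightness already established in Lemma 3.8 with a characterisation of every limit point through its conditional law given the lifetime process. By Lemma 3.8 the family $\{\mathbb{W}^n\}_{n\geq1}$ is tight in $D_{\mathcal{W}}[0,\infty)$, so by Prokhorov's theorem every subsequence admits a further weakly convergent subsequence, and every limit point is a.s.\ continuous. It therefore suffices to show that all subsequential limits share a common law. Fix one such limit $\mathbb{W}=(\mathbf{W},S)$. Projecting onto the lifetime coordinate (a continuous map on $D_{\mathcal{W}}[0,\infty)$) and invoking Proposition 3.2, the lifetime marginal of $\mathbb{W}$ is the unique process $S=f(Y)$. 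Hence the lifetime law is already pinned down, and the only remaining task is to identify the conditional law of the path process $\mathbf{W}$ given $S$.

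First I would record the discrete snake property. Writing $\check{S}^n(s,s'):=\inf_{s\leq r\leq s'}S^n_r$, the erasure/concatenation construction (1.27) gives, for any $0\leq s\leq s'$,
\[
\mathbf{W}^n_s(u)=\mathbf{W}^n_{s'}(u)\quad\text{for all }u\leq \check{S}^n(s,s'),
\]
since a path height below the lowest lifetime attained on $[s,s']$ is never erased, while the portions of the paths above their matching points are built from the independent Brownian motions $B_1,B_2,\dots$. I would then pass this to the limit along the convergent subsequence. After a Skorokhod representation the lifetimes converge uniformly on compacts (C-tightness, continuity of $S$), so $\check{S}^n(s,s')\to\check{S}(s,s'):=\inf_{s\leq r\leq s'}S_r$ for every pair $(s,s')$, and the uniform modulus-of-continuity bound on the paths from Lemma 3.7 controls the paths near the matching height. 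The upshot is that the limit obeys the continuous snake property
\[
\mathbf{W}_s(u)=\mathbf{W}_{s'}(u)\quad\text{for all }u\leq\check{S}(s,s'),
\]
with the increments beyond the matching height conditionally independent and Brownian. This is exactly the defining property of the Brownian snake with lifetime process $S$ (see [6]); in particular, conditionally on $S$, the head process $(\widehat{\mathbb{W}}_s)_s$ is a centred $\mathbb{R}^d$-valued Gaussian field with $\mathrm{Cov}(\widehat{\mathbb{W}}_s,\widehat{\mathbb{W}}_{s'})=\check{S}(s,s')\,I_d$, and the full finite-dimensional marginals of $\mathbf{W}$ are determined by the tree distances encoded in $S$. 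Thus the conditional law of $\mathbf{W}$ given $S$ is the same canonical kernel for every subsequential limit.

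Combining the two steps, the joint law of $\mathbb{W}=(\mathbf{W},S)$ is uniquely determined: the lifetime marginal is fixed by Proposition 3.2, and the conditional law of the path given the lifetime is the canonical snake kernel. Since every subsequential limit carries this same law, the whole sequence converges weakly to the unique limit $\mathbb{W}$. The main obstacle is precisely this limit-transfer of the snake property: the matching height $\check{S}^n(s,s')$ is an infimum over an interval and is not continuous on all of Skorokhod space, so one must genuinely use that the limiting lifetime is continuous together with the path equicontinuity from Lemma 3.7 to ensure that the discrete matching relation survives in the limit with $\check{S}$ in place of $\check{S}^n$, and that a small perturbation of the matching height does not produce a macroscopic change in the paths. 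Equivalently, this is the step at which convergence of the finite-dimensional distributions of $\mathbf{W}^n$ to those of the continuous snake must be checked; together with the tightness already in hand, this upgrades to full weak convergence.
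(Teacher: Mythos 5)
Your proposal is correct and follows essentially the same route as the paper: the paper's proof is a one-sentence appeal to exactly the ingredients you use, namely the C-tightness from Lemma 3.8, the uniqueness of the lifetime limit $S$ from Proposition 3.2, and the construction (1.27) with the Brownian paths $B_1,B_2,\dots$ independent of the contour process, which pins down the conditional law of the path process given the lifetime. Your write-up simply makes explicit the limit-transfer of the snake property that the paper leaves implicit.
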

\begin{proof}
    This follows from the construction of the $\mathbf{W}^n$ in (1.27), the mutual independence between the Brownian paths $B_1,B_2,...$ defined in (1.27) and the contour process $S^n$, and the uniqueness of the limit $S$.
\end{proof}

We now show convergence of the local time processes.

\subsection{The local time process}

We will show the sequence $\{L^{n,\cdot}\}_{n \geq 1}$ is C-tight in $D_{\mathcal{M}_R(\mathbb{R})}$. We follow the same approach taken in [8]. In contrast to the model considered in [8], here we can exploit the form of the limiting branching processes in random environments for our model (see Proposition 2.2).

\begin{lemma}
    The sequence $\{L^{n,\cdot}\}_{n \geq 1}$ is C-tight in $D_{\mathcal{M}_R(\mathbb{R_+})}$.
\end{lemma}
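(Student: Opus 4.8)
The plan is to establish C-tightness in $D_{\mathcal{M}_R(\mathbb{R}_+)}$ via the standard two-part criterion for càdlàg processes valued in a Polish space: a compact containment condition, together with tightness of the real-valued projections $a \mapsto L^{n,a}(\phi)$ for $\phi$ ranging over a convergence-determining family in $C_c(\mathbb{R}_+)$, supplemented by a vanishing-jumps condition to upgrade tightness to C-tightness. Here the path variable is the level $a$, and for each $a$ the object $L^{n,a}$ is a Radon measure on the contour-time axis. The key structural input, which replaces the delicate survival-probability arguments of [8], is that the total-mass profile $a \mapsto L^{n,a}_{T^{n,0}_R} = L^{n,a}([0,T^{n,0}_R])$ is exactly the branching process $\widetilde{M}^n$ run up to the inverse local time $T^{n,0}_R$, and so by Proposition 2.2 converges to the continuous limit $H$.

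For the compact containment condition I would use that the vague-compact subsets of $\mathcal{M}_R(\mathbb{R}_+)$ are precisely those confined by uniform mass bounds $\mu([0,m]) \leq C_m$ on each compact $[0,m]$. On the high-probability event $\{T^{n,0}_R \geq m\}$, guaranteed for $R$ large by Lemma 3.1 (b), we have $L^{n,a}([0,m]) = L^{n,a}_m \leq L^{n,a}_{T^{n,0}_R}$, and the right-hand side is the branching process value at generation $a$, which is bounded in probability uniformly over $a$ in compacts by Proposition 2.2 (with the mean bound of Lemma 3.4). Choosing a suitable deterministic sequence $C_m$ then confines all of $\{L^{n,a} : a \leq T\}$ to a fixed vague-compact set with probability at least $1-\epsilon$, uniformly in $n$.

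For tightness of the projections I would control the level-increments $L^{n,a+h}(\phi) - L^{n,a}(\phi)$. The total-mass increments $L^{n,a+h}_{T^{n,0}_R} - L^{n,a}_{T^{n,0}_R}$ are increments of the branching process $\widetilde{M}^n$, whose limit $H$ is continuous, since $H$ is a local time of the continuous process $Y$ by Proposition 2.3; hence these increments have a modulus of continuity that vanishes in the limit. The jumps of $a \mapsto L^{n,a}$ are single-upstep contributions of size $O(1/n)$, so they vanish, which forces continuity of all limit points and promotes tightness to C-tightness.

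The \emph{main obstacle} is transferring control of the total mass to control of the full measure-valued increments: because the measures $L^{n,a}(ds)$ are not monotone in $a$, a small change of level can in principle redistribute mass along the contour-time axis without changing the total mass appreciably, so the total-mass modulus alone does not immediately bound $|L^{n,a+h}(\phi) - L^{n,a}(\phi)|$. To close this gap I would exploit the genealogical structure relating upsteps from level $a$ and level $a+\tfrac{1}{n}$ — the contour times recording excursions above $a+\tfrac{1}{n}$ are nested inside those above $a$ — together with the already-established convergence of the snake process (Lemma 3.9), so that the contour-time location of the mass varies continuously with the level. This is precisely the point at which the explicit form of the limiting branching process $H$ from Proposition 2.2 streamlines the argument relative to [8].
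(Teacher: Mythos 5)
Your overall framework (compact containment, tightness of the projections $a \mapsto L^{n,a}(\phi)$, vanishing jumps) is a legitimate route, and the compact-containment step goes through essentially as you describe. But the crux, which you correctly isolate as the ``main obstacle,'' is left unresolved, and the mechanism you propose for it would not become a proof. Appealing to Lemma 3.9 is circular in this paper's architecture: convergence of the snake/contour process says nothing by itself about local times (local time at a level is not a continuous functional of the path in the uniform or Skorohod topology), and the identification of limit points of $L^{n,\cdot}$ (Lemma 3.11(a)) is carried out only \emph{after} the tightness you are trying to prove here. The nesting heuristic also fails quantitatively: within one excursion of $S^n$ above level $a$, the level-$a$ mass is a single atom of size $1/n$ at the excursion's start, while the level-$(a+h)$ mass is the progeny of that excursion after $\sim nh$ generations, spread over the whole (order-one) excursion interval, so mass is redistributed over macroscopic stretches of the contour-time axis. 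Worse, writing $Z_e$ for the progeny count of excursion $e$, the per-excursion discrepancy $\mathbb{E}|Z_e-1|/n$ is of order $1/n$ (extinction contributes $1$, survival has probability $O(1/(nh))$ but progeny $O(nh)$), and there are $\sim n$ excursions, so summing absolute per-excursion errors gives $\Theta(1)$ uniformly in $h$: no triangle-inequality/nesting argument can yield smallness. The quantity $\sup_s|L^{n,a+h}_s - L^{n,a}_s|$ is small only because of cancellation across excursions (most die, contributing $-1/n$ each; a few survive, contributing $+O(h)$ each), and establishing that cancellation uniformly in $a$ is the entire content of the lemma. A further slip: your vanishing-jumps claim conflates the two time variables; jumps of $t \mapsto L^{n,a}_t$ are indeed single upsteps of size $1/n$, but the path variable here is the level $a$, and the jump at $a=i/n$ is the difference of two consecutive generation measures, which is not obviously small in the vague topology.

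The paper closes exactly this gap with a two-scale argument of which your proposal has no counterpart. First, since $s \mapsto \bar L^{n,a}_s$ is nondecreasing (after stopping at $T^{n,0}_r$, justified by Lemma 3.1(b)), tightness of the measure-valued paths reduces to controlling $\sup_{a\le T}|\bar L^{n,a+h}_t-\bar L^{n,a}_t|$ for each \emph{fixed} $t$. Second, for fixed $t$, levels are discretized on a $\delta$-grid and $t$ is sandwiched between inverse local times at the grid level, $T^{n,i\delta}_{j\delta}\le t< T^{n,i\delta}_{(j+1)\delta}$; monotonicity in $t$ then bounds the increment by (i) a level increment of $\bar L^{n,i\delta+\cdot}_{T^{n,i\delta}_{j\delta}}$, which is a rescaled branching process in the shifted environment and is C-tight by the same Kurtz-criterion argument as Proposition 2.2 (this is where the cross-excursion cancellation is actually harvested), plus (ii) an overshoot term $\bar Z^{n,i,j,\delta}$, a branching process seeded at level $i\delta$ with initial mass $\delta$, whose weak limit $e^{\bar W_{i,\delta}(\cdot)}\bar\eta_\delta(\bar\tau_{i,\delta}^{-1}(\cdot))$ involves a Feller diffusion started from $\delta$ and is shown to vanish, uniformly over the grid via the bound $\sup_{s\le T+\delta}e^{W(s)}$, as $\delta\to 0$. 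The fixed-$t$ reduction, the inverse-local-time sandwiching, and the small-initial-mass overshoot estimate are the substance of the proof; without some version of them your argument does not close.
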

\begin{proof}

Let $\bar{L}^{n,a}_t=L^{n,a}_{t \wedge T^{n,0}_r}$, for arbitrary $r>0$. By Lemma 3.1 (b), it suffices to show C-tightness of $\bar{L}^{n,\cdot}$ in $D_{\mathcal{M}(\mathbb{R})}[0,\infty)$ (recall properties of convergence in the vague topology).  As is noted in [8], since for each $a$ and $n$, the function $s \mapsto \bar{L}^{n,a}_s$ is non-decreasing, it suffices to prove tightness of $\bar{L}^{n,\cdot}_t$ for each fixed $t$.

Hence we have established it suffices to show 
\[
\lim_{h \to 0}\limsup_{n \to \infty}\mathbb{P}(\sup_{0 \leq a \leq T}|\bar{L}^{n,a+h}_t-\bar{L}^{n,a}_t|>\epsilon)=0
\]
for arbitrary $T>0$ and each fixed $t$.

We now define, for any $i,j,\delta>0$,
\begin{equation}
\bar{Z}^{n,i,j,\delta}_s=\bar{L}^{n,i\delta+s}_{T^{n,i\delta}_{(j+1)\delta}}-\bar{L}^{n,i\delta}_{T^{n,i\delta}_{j\delta}}.
\end{equation}

On the event $t<T^{n,0}_r$ we have 
\[
\begin{aligned}
&\sup_{0 \leq a \leq T}|\bar{L}^{n,a+h}_t-\bar{L}^{n,a}_t| \leq \\
&\sup_{i\delta \leq T}\sup_{j\delta\leq r}\sup_{m \in [0,\delta]}|\bar{L}^{n,i\delta+h+m}_{T^{n,i\delta}_{j\delta}}-\bar{L}_{T^{n,i\delta}_{j\delta}}^{n,i\delta+m}|+\sup_{i\delta \leq T}\sup_{j\delta \leq r}\sup_{s \leq \delta}\bar{Z}^{n,i,j,\delta}_s.
\end{aligned}
\]

Fix $\delta>0$. For each fixed $i$ and $j$ such that $i\delta \leq T, j\delta \leq r$, $\bar{L}^{n,i\delta+\cdot}_{T^{n,i\delta}_{j\delta}}$ can be seen to be the Feller rescaled branching process induced by considering the trees underneath the excursions of $S^n_t$ above the level $i\delta$, where $\lfloor j\delta n \rfloor$ many excursions are considered. As such, by exactly the same argument as in Proposition 2.2, we have that the sequence $\{\bar{L}^{n,i\delta+\cdot}_{T^{n,i\delta}_{j\delta}}\}_{n \geq 1}$ is C-tight.

Hence clearly, 
\[
\lim_{h \to 0}\limsup_{n \to \infty}\mathbb{P}(\sup_{m \in [0,\delta]}|\bar{L}^{n,i\delta+h+m}_{T^{n,i\delta}_{j\delta}}-\bar{L}_{T^{n,i\delta}_{j\delta}}^{n,i\delta+m}|>\epsilon)=0
\]

And so for any $\delta>0$ fixed we have,
\[
\lim_{h \to 0}\limsup_{n \to \infty}\mathbb{P}(\sup_{i\delta \leq T}\sup_{j\delta\leq r}\sup_{m \in [0,\delta]}|\bar{L}^{n,i\delta+h+m}_{T^{n,i\delta}_{j\delta}}-\bar{L}_{T^{n,i\delta}_{j\delta}}^{n,i\delta+m}|>\epsilon)=0.
\]

We turn our attention to the $\bar{Z}^{n,i,j,\delta}$. For each fixed $i$ and $j$ such that $i\delta \leq T, j\delta \leq r$, $\bar{Z}^{n,i,j,\delta}_{s}$ is a Feller rescaled branching process in the random environment $(\beta^{(n)}_k)_{k \geq {\lfloor i\delta  \rfloor}}$, starting from `time' $i\delta$, with $\bar{Z}^{n,i,j,\delta}_{0}=\delta$ for all $n$. We are going to employ exactly the same approach as in Proposition 2.2 to show each of these sequences $\{\bar{Z}^{n,i,j,\delta}\}_{n\geq1}$ converges weakly to a limiting process, and use properties of these limiting processes to control the probability that $\bar{Z}^{n,i,j,\delta}$ is large. 

Condition 1 of Theorem 2.13 in [3] is easily seen to be met. Indeed, it is immediate that 
\[
\begin{aligned}
\left\{\sum_{k={\lfloor i\delta \rfloor}}^{\lfloor nt \rfloor+{\lfloor i\delta \rfloor}}\ln\left(\frac{\beta_k^n}{1-\beta^n_k}\right)\; ; \; t \geq 0 \right\}_{n\geq1} &\Rightarrow \{W_{i,\delta}(t)\; ; \; t \geq 0\}
\end{aligned}
\]
where 
\begin{equation}
\begin{aligned}
\{W_{i,\delta}(t)\; ; \; t \geq 0\} &\stackrel{d}{=}\{-W(t+i\delta)-(-W(i\delta))\; ; \; t \geq 0\} \\
&=\{W(i\delta)-W(t+i\delta)\; ; \; t\geq0\} \\
&=:\{\bar{W}_{i,\delta}(t)\; ; \; t\geq0\}
\end{aligned}
\end{equation}
for $W$ defined in (1.13). Hence condition 1 is satisfied. (Recall $\{\sum_{k=1}^{\lfloor nt \rfloor}\ln(m^n_k);t \geq 0 \}$ converges weakly to $\{-W(t);t \geq 0\}$, for $m^n_k$ defined in (2.4)). Moreover, if $N^n_k$ is distributed as $\text{Geom}(1-\beta^{(n)}_k)$, then
\[
\frac{1}{n}\sum_{k={\lfloor i\delta \rfloor}}^{\lfloor nt \rfloor+{\lfloor i\delta \rfloor}}\mathbb{E}[(N^n_k-m^n_k)^2|\beta^{(n)}_k]\to 2t
\]
a.s., by the same argument as in Proposition 2.2. The third condition from Theorem 2.13 in [3] can also be seen to be satisfied. Hence we see 
\[
\begin{aligned}
\{\bar{Z}^{n,i,j,\delta}_t\; ; \; t \geq 0\}_{n\geq1} &\Rightarrow \{e^{W_{i,\delta}(t)}\eta_{\delta}(\tau_{i,\delta}^{-1}(t))\; ; \; t\geq0\},
\end{aligned}
\]
where $\eta_\delta$, the Feller diffusion defined in Proposition 2.2 and started at $\delta$, is independent of $({W_{i,\delta}},\tau_{i,\delta})$, and $\tau_{i,\delta}(t)$ solves
\[
t=2\int_0^{\tau_{i,\delta}(t)}e^{-W_{i,\delta}(s)}ds.
\]

Now, due to this independence, and the definition of $\bar{W}_{i,\delta}$ in (3.7), we have 
\begin{equation}
\begin{aligned}
\{e^{W_{i,\delta}(t)}\eta_{\delta}(\tau_{i,\delta}^{-1}(t))\; ; \; t\geq0\} &\stackrel{d}{=}\{e^{\bar{W}_{i,\delta}(t)}\bar{\eta}_{\delta}(\bar{\tau}_{i,\delta}^{-1}(t))\; ; \; t\geq0\} \\
&=:\{Z^{i,\delta}(t)\; ; \; t\geq0 \},
\end{aligned}
\end{equation}
where $\bar{\tau}_{i,\delta}(t)$ solves
\[
t=2\int_0^{\bar{\tau}_{i,\delta}(t)}e^{-\bar{W}_{i,\delta}(s)}ds.
\]
and $(\bar{W}_{i,\delta},\bar{\tau}_{i,\delta})$ is independent of $\bar{\eta}$, and $\bar{\eta}$ is an independent copy of $\eta$.

Having established the convergence of $\{\bar{Z}^{n,i,j,\delta}\}_{n\geq1}$ to $Z^{i,\delta}$, we can appeal to the Portmanteau theorem to show that for each fixed $i$,
\begin{equation}
\begin{aligned}
\limsup_{n \to \infty}\mathbb{P}(\sup_{j\delta\leq r}\sup_{s \leq \delta}\bar{Z}^{n,i,j,\delta}_s\geq \epsilon)&\leq \mathbb{P}(\sup_{j\delta\leq r}\sup_{s \leq \delta}Z^{i,\delta}(s)\geq \epsilon) \\
&\leq \mathbb{P}(\sup_{i\delta\leq T}\sup_{s \leq \delta}Z^{i,\delta}(s)\geq \epsilon),
\end{aligned}
\end{equation}

where we used that the sequence $\{\bar{Z}^{n,i,j,\delta}\}_{n\geq1}$ and its weak limit are both independent of $j$. 

It is clear that each $\bar{W}_{i,\delta}$ is equal to the potential $-W$ started from time $i\delta$. Hence 
\[
\sup_{i\delta \leq T}\sup_{s \leq \delta}(\bar{W}_{i,\delta}(s))\leq \sup_{s\leq T+\delta}-W(s).
\]
Moreover, by the same observation, 
\[
\bar{\tau}^{-1}_{i,\delta}(t)\leq2\int_0^t\inf_{s\leq T+\delta}e^{-W(s)}ds=:\bar{\tau}_{max}(t)
\]
From these observations and the form of the limiting $Z^{i,\delta}$ in (3.11), we see,
\[
\begin{aligned}
    \mathbb{P}(\sup_{i\delta\leq T}\sup_{s \leq \delta}Z^{i,\delta}(s)\geq \epsilon) &\leq \mathbb{P}(\sup_{s\leq T+\delta}e^{W(s)}\sup_{s\leq \bar{\tau}_{max}(\delta)}\bar{\eta}_{\delta}(s)\geq \epsilon)
\end{aligned}
\]
and hence by (3.12),
\[
\begin{aligned}
\limsup_{n \to \infty}\mathbb{P}(\sup_{i\delta \leq T}\sup_{j\delta\leq r}\sup_{s \leq \delta}\bar{Z}^{n,i,j,\delta}_s\geq \epsilon) &\leq \mathbb{P}(\sup_{s\leq T+\delta}e^{W(s)}\sup_{s\leq \bar{\tau}_{max}(\delta)}\bar{\eta}_{\delta}(s)\geq \epsilon)
\end{aligned}
\]
also.

If we now choose $\delta_k\to0$, and appeal to the continuous mapping theorem applied to $(W,\bar{\tau}_{max},\bar{\eta}_{\delta})$, we see that
\[
\mathbb{P}(\sup_{s\leq T+\delta_k}e^{W(s)}\sup_{s\leq \bar{\tau}_{max}(\delta_k)}\eta_{\delta_k}(s)\geq \epsilon)\to0 \text{ as $\delta_k\to0.$}
\]

Hence we have shown 
\[
\lim_{\delta\to 0}\limsup_{n \to \infty}\mathbb{P}(\sup_{i\delta \leq T}\sup_{j\delta\leq r}\sup_{s \leq \delta}\bar{Z}^{n,i,j,\delta}_s\geq \epsilon)=0
\]
and we are done.
\end{proof}

\begin{corollary}
    The sequence $(\mathbb{W}^n,L^{n,\cdot},T^{n,0}_{\cdot})_{n\geq1}$ is tight in $D_{\mathcal{W} \times \mathcal{M}_R(\mathbb{R})}\times \mathcal{M}(\mathbb{R})$.
\end{corollary}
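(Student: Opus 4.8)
The plan is to assemble the corollary directly from the three tightness statements already in hand: Lemma 3.8 gives C-tightness of $\{\mathbb{W}^n\}_{n\geq1}$ in $D_{\mathcal{W}}[0,\infty)$, Lemma 3.10 gives C-tightness of $\{L^{n,\cdot}\}_{n\geq1}$ in $D_{\mathcal{M}_R(\mathbb{R})}$, and Lemma 3.1 (c) gives tightness of $\{T^{n,0}_\cdot\}_{n\geq1}$ in $\mathcal{M}(\mathbb{R})$. The only work beyond citing these is the passage from tightness of the marginals to tightness of the joint law, and the single point requiring care is that the Skorohod topology on $D_{\mathcal{W}\times\mathcal{M}_R(\mathbb{R})}[0,\infty)$ is strictly finer than the product of the two component Skorohod topologies.

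First I would combine the two path-valued components $\mathbb{W}^n$ and $L^{n,\cdot}$, which together form a random element of $D_{\mathcal{W}\times\mathcal{M}_R(\mathbb{R})}[0,\infty)$. Marginal tightness in $D_{\mathcal{W}}$ and in $D_{\mathcal{M}_R(\mathbb{R})}$ would not by itself guarantee joint tightness, because two tight c\`adl\`ag sequences may desynchronise their jumps in the limit. This obstruction disappears under C-tightness: a finite family of sequences is jointly C-tight in the product Skorohod space precisely when each component is C-tight, since the defining conditions (compact containment at each fixed time together with a vanishing uniform modulus of continuity) are preserved on passing to products, the product modulus being controlled by the maximum of the component moduli. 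Invoking this standard stability of C-tightness under finite products, Lemmas 3.8 and 3.10 yield that $\{(\mathbb{W}^n,L^{n,\cdot})\}_{n\geq1}$ is C-tight, and in particular tight, in $D_{\mathcal{W}\times\mathcal{M}_R(\mathbb{R})}[0,\infty)$.

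It then remains to adjoin the third factor $\{T^{n,0}_\cdot\}_{n\geq1}$, which lives in the Polish space $\mathcal{M}(\mathbb{R})$ rather than in a Skorohod space, so no further topological subtlety arises. For a finite product of Polish spaces, tightness of each marginal implies tightness of the joint by the elementary argument: given $\epsilon>0$, choose a compact $K_1\subset D_{\mathcal{W}\times\mathcal{M}_R(\mathbb{R})}[0,\infty)$ with $\sup_n\mathbb{P}((\mathbb{W}^n,L^{n,\cdot})\notin K_1)<\epsilon/2$ and a compact $K_2\subset\mathcal{M}(\mathbb{R})$ with $\sup_n\mathbb{P}(T^{n,0}_\cdot\notin K_2)<\epsilon/2$; then $K_1\times K_2$ is compact and $\sup_n\mathbb{P}((\mathbb{W}^n,L^{n,\cdot},T^{n,0}_\cdot)\notin K_1\times K_2)<\epsilon$. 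Combining the joint tightness of the first two components from the previous step with the tightness of $T^{n,0}_\cdot$ from Lemma 3.1 (c) gives the result. The main, and indeed only genuine, obstacle is the Skorohod-product step: it is essential that Lemmas 3.8 and 3.10 were established in the stronger C-tight form, since plain marginal tightness would be insufficient to conclude joint tightness in $D_{\mathcal{W}\times\mathcal{M}_R(\mathbb{R})}[0,\infty)$.
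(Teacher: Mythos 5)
Your proof is correct and follows essentially the same route as the paper, which simply declares the corollary immediate from Lemma 3.1 (c), Lemma 3.8 and Lemma 3.10. The extra detail you supply—that C-tightness (rather than mere tightness) of $\{\mathbb{W}^n\}$ and $\{L^{n,\cdot}\}$ is what licenses passing to the product Skorohod space $D_{\mathcal{W}\times\mathcal{M}_R(\mathbb{R})}[0,\infty)$, after which adjoining the $\mathcal{M}(\mathbb{R})$-valued component is elementary—is exactly the content the paper leaves implicit in the word ``immediate.''
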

\begin{proof}
    Immediate from Lemma 3.1 (c), Lemma 3.8 and Lemma 3.10.
\end{proof}

\subsection{The proof of Proposition 3.1 and Theorem 1.2}

We can now prove Proposition 3.1. In order to do so, we need to identify any limiting point of the sequence of local time processes as the local time process of of $S$, and we need to identify any limiting point of the sequence of inverse local times as the inverse local time of $S$. (Recall the identification of a unique limit of the sequence $\{\mathbb{W}^n\}_{n\geq1}$ was shown in Lemma 3.9). In order to do this we quote an amalgamation of Lemmas 4.11, 4.13, 4.14 in [8], whose proofs are exactly the same for our model. In each case, we sketch the proof and refer to [8] for the details.

\begin{lemma}
\[    
\begin{aligned}
&\mathbf{a})\text{ Let $L^{\cdot}$ be a limiting point of the sequence $\{L^{n,\cdot}\}_{n\geq1}$. Then $L^a_t$ is the local time } \\
&\text{\quad of $S$ at level $a$ by time $t$. } \\
&\mathbf{b}) \text{ The map $t \mapsto L^0_t$ is continuous.} \\
&\mathbf{c}) \text{ Let $T^0_r$ be the any limiting point of } \{T^{n,0}_r\}_{n\geq1}. \\
&\text{\quad Then $T^0_r$ is the inverse local time of $S$ at level 0 by time $r$.}
\end{aligned}
\]
\end{lemma}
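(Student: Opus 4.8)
The plan is to work on a single probability space throughout. By Corollary~3.1 the family $(\mathbb{W}^n,L^{n,\cdot},T^{n,0}_{\cdot})$ is tight, and combined with $S^n \Rightarrow S$ from Proposition~3.2 we may pass to a subsequence and apply Skorohod's representation theorem to realise the joint convergence $(S^n,L^{n,\cdot},T^{n,0}_{\cdot})\to(S,L^{\cdot},T^{0}_{\cdot})$ almost surely, with $S^n\to S$ uniformly on compacts since the limit $S$ is continuous. All three parts then become statements about this almost sure limit.

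For part \textbf{a)}, the strategy is to identify $L^a_t$ as the occupation density of $S$, in the normalisation consistent with Proposition~2.3. The starting point is a discrete occupation-time identity for the nearest-neighbour walk: for $h\in C_c(\mathbb{R})$,
\[
\int_0^t h(S^n_s)\,ds=\frac{1}{n^2}\sum_{a\in\frac{1}{n}\mathbb{Z}}h(a)\,\#\{j\leq n^2t:S^n_{j/n^2}=a\}+o(1),
\]
together with the observation that, by the nearest-neighbour structure of $S^n$, the number of visits to a site $a$ differs from the sum of the upstep counts out of $a$ and out of $a-\frac{1}{n}$ by a uniformly bounded error. Since by (1.29)--(1.30) the upstep count out of $a$ equals $nL^{n,a}_t$, this rewrites the right-hand side, up to a vanishing error and the universal constant $c$ tracked throughout the paper, as $c\int_{\mathbb{R}}h(a)L^{n,a}_t\,da$. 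Passing to the almost sure limit, using uniform convergence $S^n\to S$ on the left and vague convergence $L^{n,\cdot}\to L^{\cdot}$ on the right, yields the occupation density formula
\[
\int_0^t h(S_s)\,ds=c\int_{\mathbb{R}}h(a)L^a_t\,da,
\]
which characterises $L^a_t$ as the local time of $S$ at level $a$ by time $t$.

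Parts \textbf{b)} and \textbf{c)} then follow from the structure already in place. For \textbf{b)}, once part a) identifies $L$ with the local time of the continuous diffusion $S$, continuity of $t\mapsto L^0_t$ is the standard continuity of diffusion local time in the time variable; alternatively it is forced by the C-tightness of $L^{n,\cdot}$ established in Lemma~3.10. For \textbf{c)}, the defining relation $T^{n,0}_r=\inf\{s:L^{n,0}_s>r\}$ from (1.30) is passed to the limit: using the joint almost sure convergence of $(L^{n,0}_{\cdot},T^{n,0}_r)$, the continuity of $t\mapsto L^0_t$ from b), and the fact that any limit point $T^0_r$ is a.s.\ a continuity point (Lemma~3.1 (d)), one obtains $T^0_r=\inf\{s:L^0_s>r\}$, the inverse local time of $S$ at level $0$ by time $r$.

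The main obstacle is part a): the care lies in the discrete occupation-time identity, where one must control the boundary contributions arising from the reflection of $S^n$ at $0$ and $K$ (see (1.25)--(1.26)), verify that the discrepancy between the visit count and the upstep count is uniformly negligible after rescaling, and track the normalising constant $c$ so that the limit is consistent with the local-time convention of Proposition~2.3. Since this is precisely the content of Lemmas 4.11, 4.13 and 4.14 in [8], and the argument transfers without change to our setting, we refer the reader there for the remaining details.
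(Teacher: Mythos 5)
Your setup (pass to a subsequence, Skorohod representation, almost sure joint convergence) and your treatment of parts \textbf{a)} and \textbf{c)} follow essentially the same route as the paper: the paper proves a) by passing a discrete occupation-time identity to the limit, using the indicator version $\int_0^t 1_{S_s\leq a}\,ds=\int_0^a L^s_t\,ds$ rather than test functions $h\in C_c(\mathbb{R})$, and proves c) exactly as you describe, sandwiching $\inf\{s\geq0:L^0_s>r\}$ between $T^0_{r-\delta}$ and $T^0_{r+\delta}$ and invoking Lemma 3.1 (d); like you, the paper defers the remaining details to Lemmas 4.11, 4.13, 4.14 of [8]. Your explicit tracking of the normalising constant and of the upstep-versus-visit-count bookkeeping is a reasonable (and arguably more careful) variant of the same idea.

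The genuine gap is in part \textbf{b)}, the one place where you diverge from the paper, and neither of your two justifications works as stated. The ``alternative'' justification is simply incorrect: Lemma 3.10 gives C-tightness of $\{L^{n,\cdot}\}_{n\geq1}$ in $D_{\mathcal{M}_R(\mathbb{R})}$, i.e.\ regularity in the \emph{level} variable $a$ of the map $a\mapsto L^{n,a}(ds)$; this says nothing about atoms of the time-measure $L^0(ds)$, i.e.\ about jumps of $t\mapsto L^0_t$, which is precisely what b) must rule out. The primary justification (``by a), $L$ is the local time of $S$, and diffusion local time is continuous in time'') assumes more than a) delivers: part a) identifies $L^a_t$ for each \emph{fixed} pair $(a,t)$, almost surely, through an occupation-density formula, and an occupation density is only determined for Lebesgue-a.e.\ level, with null sets depending on $t$. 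To conclude that the specific limiting object $L^0_{\cdot}$ --- at the single level $0$, simultaneously for all $t$ --- coincides with the continuous version of the local time of $S$ requires an additional argument (for instance: the true local time of $S$ is continuous in the level because $W$ is continuous by (1.24) and Proposition 2.3 gives its explicit form; a cadlag-in-level function agreeing a.e.\ with a continuous one agrees everywhere; then monotonicity and right-continuity in $t$ upgrade the identification from fixed rational times to all times). You treat this step as automatic, but it is the entire content of b). The paper avoids the issue by proving b) directly at the discrete level: it shows C-tightness of $\{L^{n,0}_{T^{n,0}_r\wedge\cdot}\}_{n\geq1}$ in $D_{\mathbb{R}}[0,\infty)$ by contradiction, noting that a jump of size $\epsilon$ of $L^{n,0}$ within a time window of length $\delta$ forces some increment $T^{n,0}_{(i+1)\epsilon/2}-T^{n,0}_{i\epsilon/2}$, $i\leq\lfloor 2r/\epsilon\rfloor$, to be at most $\delta$, and this event has uniformly small probability for large $n$ because these increments are identically distributed and, by the second statement of Lemma 3.1 (a), do not concentrate near $0$. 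You should either reproduce that argument or spell out the missing identification step in your route.
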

\begin{proof}
    According to Corollary 3.1 we may switch to a probability space where $(L^{n,\cdot},T^{n,0}_{\cdot})\to (L^{\cdot},T^0_{\cdot})$ a.s.
    
    (a):
    Fix $a$ and $t$. It suffices to show 
    \[
    \int_0^t1_{S_s\leq a}ds=\int_0^aL^s_tds. \\
    \]
    Using the fact that any $s\neq a$ is a point of continuity of the function $t \mapsto L^a_t$, properties of weak convergence of measures imply that
    \[
    L^{n,r}_s \to L^r_s
    \]
    a.s. for all $s\neq r$.
    From this it immediately follows that 
    \[
    \int_0^aL^{n,r}_tdr \to \int_0^aL^r_tdr \text{\quad a.s.}
    \]
    On the other hand, according to the definition of $L^{n,\cdot}$,
    \[
    \int_0^aL^{n,r}_tdr=\int_0^{{\lfloor n^2t \rfloor}/{n^2}}1_{S^n_s\leq a}ds.
    \]
    Using that $S^n$ converge weakly to $S$, and the tightness of $\{L^{n,r}_t\}_{n\geq1}$ to control the error of $1_{S^n_t\leq a+\delta}$ one can show 
    \[
    \int_0^{{\lfloor n^2t \rfloor}/{n^2}}1_{S^n_s\leq a}ds \to \int_0^t1_{S_s\leq a}ds \text{\quad a.s.}
    \]
    
    (b):
    It suffices to show $\{L^{n,0}_{T^{n,0}_r\wedge \cdot}\}$ is C-tight in $D_{\mathbb{R}}[0,\infty)$ for arbitrary $r>0$.
    We suppose for a contradiction that it is not. That is, we suppose there exist $\epsilon,\epsilon_1$ such that for all $\delta>0,n\geq1$,
    \begin{equation}
    \mathbb{P}(\sup_{t\leq T^{n,0}_r}L^{n,0}_t-L^{n,0}_{t-\delta}>\epsilon)>\epsilon_1
    \end{equation}
    We have the following inclusion,
    \begin{equation}
    \{\sup_{t\leq T^{n,0}_r}L^{n,0}_t-L^{n,0}_{t-\delta}>\epsilon\} \subset \{\exists i=1,2,...,\lfloor {2r}/{\epsilon}\rfloor :T^{n,0}_{{(i+1)\epsilon}/{2}}-T^{n,0}_{{i\epsilon}/{2}}>\delta\}
    \end{equation}
    We control the probability of the event on the right hand side in (3.14) using that $\left(T^{n,0}_{{(i+1)\epsilon}/{2}}-T^{n,0}_{{i\epsilon}/{2}}\right)_{i\geq1}$ are identically distributed and appealing to the second statement in Lemma 3.1 (a). Then we obtain that for all $n$ sufficiently large,
    \[
    \mathbb{P}(\exists i=1,2,...,\lfloor {2r}/{\epsilon}\rfloor :T^{n,0}_{{(i+1)\epsilon}/{2}}-T^{n,0}_{{i\epsilon}/{2}}>\delta)\leq \frac{\epsilon_1}{2},
    \]
    which is in contradiction with (3.13).

    (c):
    By Lemma 3.1 (d), and again using properties of weak convergence of measures, we see that 
    \[
    T^{n,0}_r \to T^0_r \text{\quad a.s.}
    \]
    Fixing $\delta>0$ and observing the definition of the $L^{n,0}_{\cdot}$ gives, 
    \[
    L^{n,0}_{T^{n,0}_{r+\delta}}\geq r+\delta.
    \]
    Continuity of the map $t \mapsto L^0_t$ gives,
    \[
    L^0_{T^0_{r+\delta}}\geq r+\delta,
    \]
    and so 
    \[
    \inf\{s\geq0:L^0_s>r\}\leq T^0_{r+\delta}
    \]
    An analogous argument shows
    \[
    \inf\{s\geq0:L^0_s>r\}\leq T^0_{r-\delta}
    \]
    Since $\delta$ was arbitrary, applying Lemma 3.1 (d) gives the desired result.
\end{proof}

We obtain Proposition 3.1 as an immediate consequence.

\textbf{Proof of Proposition 3.1:}

Immediate from Lemma 3.1 (c), Proposition 3.2, Lemma 3.9, Lemma 3.10, and Lemma 3.11 (a) and (c).

We are now ready to prove Theorem 1.3.

\textbf{Proof of Theorem 1.3:}

The weak convergence of $(\mathbf{W}^n,S^n)$ to $\mathbb{W}$ was shown in Lemma 3.9.

Let $f \in \mathcal{C}_b(\mathbb{R}^d)$, and switch to a probability space where 
\[
((\mathbf{W}^n,S^n),L^n,T^{n,0}) \to ((\mathbf{W},S),L,T^0)
\]
almost surely. 

Note, $T^{n,0}_1 \to T^0_1$ almost surely, by the continuity of $T^0_{r}$ at the point $r=1$. By Lemma 3.11 (b), $L^0_{\cdot}$ is continuous at $T^0_1$. Hence we immediately get
\[
\int_0^{T^{n,0}_1}f(s)L^{n,0}(ds) \to \int_0^{T^0_1}f(s)L^0(ds)
\]
almost surely. 

By Lemma 3.11 (c) we know $T^0_1$ is the inverse local time of $S$ at level 0 by time 1. By Lemma 3.11 (a) we know $L^{n,a}_t$ is the local time of $S$ at level $a$ by time $t$.

Note that C-tightness of the snake processes implies uniform on compacts convergence of $\mathbb{\widehat{W}}^n$ to $\mathbb{\widehat{W}}$. Hence for $t=0$ we obtain Theorem~1.3. For $t>0$, the result follows immediately as $L^t(ds)$ is unchanged at the point $s=T^0_1$. Hence we have shown 
\[
\int_0^{T^{n,0}_1}\phi(\mathbb{\widehat{W}}^n_s)L^{n,t}(ds) \Rightarrow \int_0^{T^{0}_1}\phi(\mathbb{\widehat{W}}_s)L^t(ds).
\]

As was mentioned in the introduction, 
\[
X^n_t(\phi)=\int_0^{T^{n,0}_1}\phi(\mathbb{\hat{W}}^n_s)L^{n,t}(ds),
\]
for $X^n$ defined in (1.23). By Theorem~1.2 (proved in the next section), the sequence $\{X^n\}_{n\geq1}$ is tight in $D_{\mathcal{M}(\mathbb{R}^d)}[0,K]$. Let $X$ be a limiting point. We have shown $X$ has the representation
\[
\forall \phi \in \mathcal{C}_b(\mathbb{R}^d), \forall t \in [0,K], \quad X_t(\phi)=\int_0^{T^{0}_1}\phi(\mathbb{\widehat{W}}_s)L^t(ds),
\]
where $L^a_s$ is the local time of $S$ at level a by time t, and $T^0_1$ is the inverse local time of $S$ at level 0 by time 1. The extension to any $\phi \in \mathcal{B}(\mathbb{R}^d)$ is trivial. The theorem is proved.

\section{The Martingale Problem and Proof of Theorem 1.2}

Here we shall show tightness of the sequence of measure-valued processes defined in (1.23) and derive the martingale problem satisfied by any limit point. We use broadly the same techniques as in [7].

Let $\mathbb{\dot{R}}^d:=\mathbb{R}^d\cup \{\infty\}$ denote the one point compactification of $\mathbb{R}^d$. We will first show C-tightness of the sequence $\{X^n\}_{n\geq1}$ in $D_{\mathcal{M}(\mathbb{\dot{R}}^d)}[0,\infty)$, where it suffices to show C-tightness of $\{X^n(\phi)\}_{n\geq1}$ in $D[0,\infty)$ for all $\phi \in \mathcal{C}^2_b(\mathbb{\dot{R}}^d)$. The identification of any limiting point as being a random element of $D_{\mathcal{M}(\mathbb{R}^d)}[0,K]$ that vanishes at $K$ and is continuous on $[0,K)$ will then follow from Theorem 1.2.

We define notation to keep track of the particles in exactly the same way as in [7]. Throughout this section, unless stated otherwise, $\phi$ denotes a function in $\mathcal{C}^2_b(\mathbb{\dot{R}}^d)$.

Recall $X^n_0=\delta_0$, a point mass located at the origin in $\mathbb{R}^d$. We append a cemetery state $\Lambda$ to the state space $\mathbb{\dot{R}}^d$, and adopt the convention that $\phi(\Lambda)=0$ for all $\phi:\mathbb{\dot{R}}^d \to \mathbb{R}$.

Define the family
\[
I=\{\alpha=(\alpha_0,\alpha_1,...,\alpha_N):\alpha_i\in \mathbb{N}_0 \text{ for $0\leq i \leq N$},N\in \mathbb{N}\}.
\]
$\alpha \in I$ keeps track of a single lineage of $\alpha_0$, one of the initial particles located at the origin.

Define the length of $\alpha$ by $|\alpha|=N$, and write $\alpha \sim_n t$ when $\frac{|\alpha|}{n}\leq t\leq \frac{|\alpha|+1}{n}$. Adopt the convention $\alpha-j=(\alpha_0,...,\alpha_{N-j})$, where $|\alpha|=N$.

We now turn to defining the spatial motion of the particles.

Define $\{\widetilde{B}^{n,\alpha}:|\alpha|=0\}$ to be a collection of independent Brownian motions in $\mathbb{R}^d$, stopped at time $t=\frac{1}{n}$, started from the origin. 

Define now recursively $\{\widetilde{B}^{n,\alpha}:|\alpha|=N\}$ to be a collection of conditionally independent Brownian motions given $\sigma(\{\widetilde{B}^{n,\alpha}: \alpha_0 \leq n, |\alpha|=N-1\})$, with 
\[
\widetilde{B}^{n,\alpha}_t=\widetilde{B}^{n,\alpha -1}_t, \quad \text{when $t\leq \frac{|\alpha|}{n}$}.
\]
We adopt the convention that $\infty\in \mathbb{\dot{R}}^d$ is an absorbing state for the $\{\widetilde{B}^{n,\alpha}\}_{\alpha \in I,n\geq1}$.

We now define the branching. Let $\{N^{n,\alpha}:\alpha \in I\}$ be a collection random variables such that $\{N^{n,\alpha}:|\alpha|=i\}$ are conditionally independent of each other and of the spatial motion, given $\beta^{(n)}$. The law of $N^{n,\alpha}$ for $|\alpha|=i$ is geometric with parameter $1-\beta^{(n)}_{i+1}$. That is,
\[
\mathbb{P}(N^{n,\alpha}=k|\beta^{(n)}_{i+1})=(\beta^{(n)}_{i+1})^k(1-\beta^{(n)}_{i+1}).
\]

To make the branching compatible with the spatial motion we introduce the stopping times
\[
\tau^{n,\alpha}=\begin{cases}
0, \quad \text{if $\alpha_0>n$} \\
\min_{0\leq i\leq |\alpha|}\{\frac{i+1}{n}:N^{\alpha|_i}=0\}, \quad \text{if this set $\neq \emptyset$}, \\
\frac{|\alpha|+1}{n}, \quad \text{else.}
\end{cases}
\]

and set 
\[
B^{n,\alpha}_t=\begin{cases}
    \widetilde{B}^{n,\alpha}_t, \quad \text{if $t<\tau^{n,\alpha}$} \\
    \Lambda, \quad \text{else.}
\end{cases}
\]

We can now define the measure-valued process in the compactified state space. 
\begin{equation}
X^n_t(A):=\frac{\#\{B^{n,\alpha}_t\in A: \alpha \sim_n t\}}{n}, \quad A\in\mathcal{B}(\mathbb{\dot{R}}^d).
\end{equation}

We introduce a filtration for the process $X^n$:
\begin{equation}
\begin{aligned}
&\mathcal{F}^n_t:=\sigma\left(\{\widetilde{B}^{n,\alpha},N^{n,\alpha}:|\alpha|<i\}\right)\vee \left[\cap_{s \geq t}\sigma(\widetilde{B}^{n,\alpha}_s:|\alpha|=i+1)\right], \\
&\text{when $\frac{i}{n}\leq t < \frac{i+1}{n}$,}
\end{aligned}
\end{equation}

\begin{remark}
    Let $a_n=\frac{1}{n}$, $s_n=\frac{\lfloor ns \rfloor}{n}$, and $\alpha\sim_n s_n$. Then $N^{n,\alpha}$ is $\mathcal{F}^n_{s_n+a_n}$ measurable, and
    \[
    \mathbb{E}[N^{n,\alpha}|\mathcal{F}^n_{s_n+a_n}]=\frac{\beta^{(n)}_{\lfloor sn \rfloor +1}}{1-\beta^{(n)}_{\lfloor sn \rfloor +1}}.
    \]
\end{remark}

Let $\alpha\sim_n s$ for $s \in [i_n,i_n+a_n)$ and let $\phi \in \mathcal{C}^2_b(\mathbb{\dot{R}}^d)$. Define the martingale noise associated with the spatial motion of the particle $\alpha$ over the time interval $[i_n,i_n+a_n)$ by,
\[
M^{n,\alpha,i_n}_s(\phi)=\begin{cases}
    \phi(\widetilde{B}^{n,\alpha}_s)-\phi(\widetilde{B}^{n,\alpha}_{i_n})-\frac{1}{2}\int_{i_n}^s\Delta \phi (\widetilde{B}^{n,\alpha}_r)dr, \quad \text{if $\widetilde{B}^{n,\alpha}_{i_n}\neq \Lambda$} \\
    0 \quad \text{else.}
\end{cases}
\]

To show C-tightness of $\{X^n(\phi)\}_{n\geq1}$ it will be crucial to identify a useful semimartingale decomposition of $X^n(\phi)$. We first give a simpler decomposition. 

For the $X^n$ defined in (4.1) and $\phi \in \mathcal{C}^2_b(\mathbb{\dot{R}}^d)$, we have
\begin{equation}
    X^n_t(\phi)=X^n_0(\phi)+\frac{1}{2}\int_0^tX^n_s(\Delta \phi)ds+\widehat{M}^n_t(\phi), 
\end{equation}
where
\begin{equation}
\begin{aligned}
    \widehat{M}^n_t(\phi)=&\frac{1}{n}\sum_{s_n<t_n}\sum_{\alpha \sim_n s_n}\phi(B^{n,\alpha}_{s_n+a_n})(N^{n,\alpha}-1)+\frac{1}{n}\sum_{s_n<t_n}\sum_{\alpha \sim_n s_n}M^{n,\alpha,s_n}_{s_n+a_n}(\phi) \\
    &+\frac{1}{n}\sum_{\alpha \sim_n t_n}M^{n,\alpha,t_n}_t(\phi).
    \end{aligned}
\end{equation}
The equation (4.4) can be seen to be true, however $\widehat{M}^n_t(\phi)$ is not an $\mathcal{F}^n_t$ martingale. To make it into a martingale we make the further decomposition given by:
\[
\begin{aligned}
N^{n,\alpha}-1 \mapsto &N^{n,\alpha}-\mathbb{E}[N^{n,\alpha}|\beta^{(n)},\mathcal{F}^n_{s_n}]+ \\
&\mathbb{E}[N^{n,\alpha}|\beta^{(n)},\mathcal{F}^n_{s_n}]-\mathbb{E}[N^{n,\alpha}|\mathcal{F}^n_{s_n}]+\mathbb{E}[N^{n,\alpha}|\mathcal{F}^n_{s_n}]-1
\end{aligned}
\]
for $\alpha \sim_n s_n$.

Substituting this decomposition into the definition of $\widehat{M}^n_t(\phi)$ in (4.4), we see that we will obtain martingale noise from three distinct sources. One due to the spatial motion of the particles, another from the branching conditioned on the environment, and a third due to the fluctuations of the environment. We will also be left with an aggregate drift term. More formally,
\begin{equation}
    X^n_t(\phi)=X^n_0(\phi)+\frac{1}{2}\int_0^tX^n_s(\Delta \phi)ds+Z^n_t(\phi)+N^n_t(\phi)+M^n_t(\phi)+A^n_t(\phi)
\end{equation}
where
\begin{equation}
    \begin{aligned}
        &Z^n_t(\phi)=\frac{1}{n}\sum_{s_n<t_n}\sum_{\alpha \sim_n s_n}\phi(B^{n,\alpha}_{s_n+a_n})(N^{n,\alpha}-\mathbb{E}[N^{n,\alpha}|\beta^{(n)},\mathcal{F}^n_{s_n}]), \\
        &N^n_t(\phi)=\frac{1}{n}\sum_{s_n<t_n}\sum_{\alpha \sim_n s_n}\phi(B^{n,\alpha}_{s_n+a_n})(\mathbb{E}[N^{n,\alpha}|\beta^{(n)},\mathcal{F}^n_{s_n}]-\mathbb{E}[N^{n,\alpha}|\mathcal{F}^n_{s_n}]), \\
        &M^n_t(\phi)=\frac{1}{n}\sum_{s_n<t_n}\sum_{\alpha \sim_n s_n}M^{n,\alpha,s_n}_{s_n+a_n}(\phi)+\frac{1}{n}\sum_{\alpha \sim_n t_n}M^{n,\alpha,t_n}_t(\phi), \\
        &A^n_t(\phi)=\frac{1}{n}\sum_{s_n<t_n}\sum_{\alpha \sim_n s_n}\phi(B^{n,\alpha}_{s_n+a_n})(\mathbb{E}[N^{n,\alpha}|\mathcal{F}^n_{s_n}]-1).
    \end{aligned}
\end{equation}
It is clear that $Z^n_t(\phi),N^n_t(\phi),M^n_t(\phi)$ are all $\mathcal{F}^n_t$ martingales, closely related to the branching conditioned on the environment; the fluctuations due to the environment; and the spatial motion, respectively. $A^n_t(\phi)$ is the aggregate drift term. To show C-tightness of $X^n_t(\phi)$, it suffices to show C-tightness of all terms in the decomposition (4.6). 

In the tightness proofs we shall make indispensable use of the fact that 
\begin{equation}
\lim_{k\to \infty}\limsup_{n\to \infty}\mathbb{P}(\sup_{t\leq T}X^n(1)>k)=0,
\end{equation}
for any fixed $T>0$. This follows from the C-tightness of the sequence of total mass processes, which in turn follows from the more general result in Proposition 2.2.

\begin{lemma}
    For any $t\geq0$, $\sup_{s\leq t}|M^n_t(\phi)| \stackrel{\mathbb{P}}{\to}0$.
\end{lemma}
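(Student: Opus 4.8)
The plan is to exploit the fact, already noted above, that $M^n(\phi)$ is a continuous $\mathcal{F}^n_t$-martingale started at $0$, and to bound its quadratic variation, which will turn out to be of order $1/n$. Doob's $L^2$ maximal inequality then delivers the convergence. First I would rewrite each single-particle noise as a stochastic integral: by It\^o's formula, whenever $\widetilde{B}^{n,\alpha}_{i_n}\neq\Lambda$,
\[
M^{n,\alpha,i_n}_s(\phi)=\int_{i_n}^{s}\nabla\phi(\widetilde{B}^{n,\alpha}_r)\cdot d\widetilde{B}^{n,\alpha}_r .
\]
Consequently $M^n_s(\phi)$ is a continuous stochastic integral against the driving Brownian motions $\{\widetilde{B}^{n,\alpha}\}$, and in particular a genuine continuous martingale: the time-step boundary terms in (4.7) match up continuously because each $M^{n,\alpha,i_n}_{\cdot}(\phi)$ vanishes at $s=i_n$.

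Next I would compute the quadratic variation. On a time interval $[s_n,s_n+a_n)$ only the particles $\alpha\sim_n s_n$ contribute, and distinct such lineages are driven by conditionally independent Brownian motions, so all cross-variations vanish and
\[
d\langle M^n(\phi)\rangle_r=\frac{1}{n^2}\sum_{\alpha\sim_n s_n,\,\widetilde{B}^{n,\alpha}_r\neq\Lambda}|\nabla\phi(\widetilde{B}^{n,\alpha}_r)|^2\,dr .
\]
Since the number of live particles carried by $X^n$ at time $s_n$ equals $nX^n_{s_n}(1)$, and $\|\nabla\phi\|_\infty<\infty$ because $\phi\in\mathcal{C}^2_b(\mathbb{\dot{R}}^d)$, integrating gives the key bound
\[
\langle M^n(\phi)\rangle_t\le\frac{\|\nabla\phi\|_\infty^2\,t}{n}\sup_{r\le t}X^n_r(1).
\]

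The only obstacle is that $\sup_{r\le t}X^n_r(1)$ is merely bounded in probability rather than uniformly bounded, which I would circumvent by localisation at the total mass. Introduce the stopping time $\tau_k=\inf\{r\ge0:X^n_r(1)>k\}$. The stopped martingale $M^n_{\cdot\wedge\tau_k}(\phi)$ then satisfies $\langle M^n(\phi)\rangle_{t\wedge\tau_k}\le \|\nabla\phi\|_\infty^2 tk/n$, so Doob's inequality yields
\[
\mathbb{E}\Big[\sup_{s\le t}|M^n_{s\wedge\tau_k}(\phi)|^2\Big]\le \frac{4\|\nabla\phi\|_\infty^2\, t\, k}{n}\to 0
\]
as $n\to\infty$, for each fixed $k$. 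Finally, for any $\epsilon>0$ I would use the union bound
\[
\mathbb{P}\Big(\sup_{s\le t}|M^n_s(\phi)|>\epsilon\Big)\le \mathbb{P}\Big(\sup_{s\le t}|M^n_{s\wedge\tau_k}(\phi)|>\epsilon\Big)+\mathbb{P}\Big(\sup_{r\le t}X^n_r(1)>k\Big),
\]
taking $\limsup_{n\to\infty}$ to kill the first term for fixed $k$, and then letting $k\to\infty$ to kill the second by (4.8). This establishes $\sup_{s\le t}|M^n_s(\phi)|\stackrel{\mathbb{P}}{\to}0$, as required.
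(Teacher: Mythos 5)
Your proposal is correct and follows essentially the same route as the paper: represent each per-particle noise as a stochastic integral of $\nabla\phi$ via It\^o's formula, use the conditional independence of the particles given $\mathcal{F}^n_{s_n}$ together with It\^o's isometry to bound the quadratic variation by $\|\nabla\phi\|_\infty^2\, t\,\sup_{r\le t}X^n_r(1)/n$, and then conclude from the tightness of the total mass process (the paper's (4.7), which you cite as (4.8)). The only difference is the final step: the paper invokes Lenglart's inequality directly, $\mathbb{P}(\sup_{s\le t}|M^n_s(\phi)|>\epsilon)\le\mathbb{P}(\langle M^n(\phi)\rangle_t\ge\delta)+\delta/\epsilon^2$, whereas you localise at total-mass level $k$ and apply Doob's $L^2$ maximal inequality plus a union bound --- which is in effect a self-contained proof of the Lenglart-type bound in this setting, so the two arguments are interchangeable.
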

\begin{proof}
    We show the statement when $t=t_n$ for $t_n$ defined in Remark 4.1. This suffices. First note,
    \begin{equation}
    \begin{aligned}
    \langle M^n(\phi) \rangle_{t_n}&=\frac{1}{n^2}\sum_{s_n<t_n}\mathbb{E}\left[\left(\sum_{\alpha \sim_n s_n}M^{n,\alpha,s_n}_{s_n+a_n}(\phi)\right)^2\Bigg|\mathcal{F}^n_{s_n}\right] \\
    &=\frac{1}{n^2}\sum_{s_n<t_n}\sum_{\alpha \sim_n s_n}\mathbb{E}[M^{n,\alpha,s_n}_{s_n+a_n}(\phi)^2|\mathcal{F}^n_{s_n}],
    \end{aligned}
    \end{equation}
    where in the second equality we used the fact that given $\mathcal{F}^n_{s_n}$, the $\{M^{n,\alpha,s_n}_{s_n+a_n}(\phi)\}_{\alpha\sim_n s_n}$ are independent, mean 0 random variables.

    Noting that Ito's formula implies that $M^{n,\alpha,s_n}_{s_n+a_n}(\phi)$ is a stochastic integral of $\nabla \phi$ against $B^{n,\alpha}$, and applying Ito's isometry, gives
    \begin{equation}
        \mathbb{E}[M^{n,\alpha,s_n}_{s_n+a_n}(\phi)^2|\mathcal{F}^n_{s_n}]\leq \frac{|| \nabla\phi ||^2_{\infty}}{n}.
    \end{equation}
    And hence
    \begin{equation}
        \begin{aligned}
            \langle M^n(\phi) \rangle_{t_n}&\leq \frac{1}{n^3}\sum_{s_n<t_n}\sum_{\alpha \sim_n s_n}||\nabla \phi ||^2_{\infty} \\
            &=\frac{1}{n^2}\sum_{s_n< t_n}||\nabla \phi ||^2_{\infty}X^n_{s_n}(1) \\
            &\leq \frac{1}{n^2}(nt||\nabla \phi ||^2_{\infty}\sup_{s_n< t_n}X^n_{s_n}(1)) \\
            &=\frac{1}{n}(t||\nabla \phi ||^2_{\infty}\sup_{s_n< t_n}X^n_{s_n}(1))
        \end{aligned}
    \end{equation}
    From (4.7) we see that 
    \[
    \langle M^n(\phi) \rangle_{t_n} \stackrel{\mathbb{P}}{\to}0.
    \]
    By Lenglart's inequality, for arbitrary $\epsilon, \delta >0$,
    \[
    \mathbb{P}(\sup_{s_n < t_n}|M^n_t(\phi)|>\epsilon)\leq \mathbb{P}(\langle M^n(\phi) \rangle _{t_n} \geq \delta ) + \frac{\delta}{\epsilon^2}.
    \]
    Letting $n\to \infty$ and $\delta \to 0$ concludes the proof.
\end{proof}

\begin{lemma}
    The sequence $\{\frac{1}{2}\int_0^tX^n_s(\Delta \phi) ds:t\geq0\}_{n\geq 1}$ is C-tight.
\end{lemma}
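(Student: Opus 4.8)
The plan is to exploit the fact that each member of the sequence is not merely continuous but Lipschitz in $t$, with a random Lipschitz constant controlled by the total mass. Write $A^n_t:=\frac{1}{2}\int_0^tX^n_s(\Delta\phi)\,ds$. Since $\phi\in\mathcal{C}^2_b(\dot{\mathbb{R}}^d)$, the Laplacian $\Delta\phi$ is bounded, so $|X^n_s(\Delta\phi)|\leq\|\Delta\phi\|_\infty X^n_s(1)$. Because each $A^n$ is already a continuous process, C-tightness in $D[0,\infty)$ reduces, by the standard criterion, to two ingredients: compact containment (equivalently, tightness of the one-dimensional distributions) and a uniform control of the modulus of continuity. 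I would verify both directly from the elementary bound just stated together with the uniform total-mass estimate (4.7).

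First I would establish the modulus-of-continuity estimate. For any $T>0$ and $t,t'\leq T$ with $|t-t'|\leq\delta$, the integral form gives
\[
|A^n_t-A^n_{t'}|=\frac{1}{2}\left|\int_{t'}^tX^n_s(\Delta\phi)\,ds\right|\leq\frac{\delta}{2}\|\Delta\phi\|_\infty\sup_{s\leq T}X^n_s(1),
\]
so that
\[
\sup_{\substack{t,t'\leq T\\|t-t'|\leq\delta}}|A^n_t-A^n_{t'}|\leq\frac{\delta}{2}\|\Delta\phi\|_\infty\sup_{s\leq T}X^n_s(1).
\]
The same bound with $t'=0$ shows $\sup_{t\leq T}|A^n_t|\leq\frac{T}{2}\|\Delta\phi\|_\infty\sup_{s\leq T}X^n_s(1)$, which handles compact containment, since the right-hand side is tight in view of (4.7).

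Next I would convert these deterministic bounds into the required probabilistic statement. Fix $\epsilon,\eta>0$. By (4.7) (itself a consequence of the C-tightness of the total-mass process in Proposition 2.2) choose $k$ with $\limsup_{n\to\infty}\mathbb{P}(\sup_{s\leq T}X^n_s(1)>k)\leq\eta$. Then choosing $\delta$ small enough that $\frac{\delta}{2}\|\Delta\phi\|_\infty k\leq\epsilon$, the displayed bound yields the inclusion
\[
\Big\{\sup_{|t-t'|\leq\delta}|A^n_t-A^n_{t'}|>\epsilon\Big\}\subset\Big\{\sup_{s\leq T}X^n_s(1)>k\Big\},
\]
so that $\limsup_{n\to\infty}\mathbb{P}(\sup_{|t-t'|\leq\delta}|A^n_t-A^n_{t'}|>\epsilon)\leq\eta$. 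Letting $\delta\to0$ and then $\eta\to0$ gives
\[
\lim_{\delta\to0}\limsup_{n\to\infty}\mathbb{P}\Big(\sup_{\substack{t,t'\leq T\\|t-t'|\leq\delta}}|A^n_t-A^n_{t'}|>\epsilon\Big)=0,
\]
which is exactly the modulus-of-continuity condition. Together with compact containment this establishes C-tightness. There is no serious obstacle here; the only point requiring care is that all the integrands are uniformly controlled by the total mass, so the entire argument rests on the uniform mass bound (4.7), and the boundedness of $\Delta\phi$ guaranteed by working on the compactified space $\dot{\mathbb{R}}^d$.
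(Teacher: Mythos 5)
Your proposal is correct and follows essentially the same route as the paper: bound the increments of $\frac{1}{2}\int_0^t X^n_s(\Delta\phi)\,ds$ by $\frac{1}{2}|t-t'|\,\|\Delta\phi\|_\infty \sup_{s\leq T} X^n_s(1)$ using boundedness of $\Delta\phi$, then invoke the uniform total-mass bound (4.7) to convert this into the modulus-of-continuity and compact-containment conditions. The paper's proof is just a terser version of the same argument; your write-up spells out the quantifier bookkeeping explicitly.
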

\begin{proof}
    The proof is essentially the same as in [7]. Note
    \[
    \begin{aligned}
    \Bigg|\frac{1}{2}\int_0^tX^n_r(\Delta \phi)dr - \frac{1}{2}\int_0^sX^n_r(\Delta \phi)dr\Bigg|&\leq \frac{1}{2}|t-s|\sup_{r\leq t}X^n_r(\Delta \phi) \\
    &\leq ||\frac{1}{2}\Delta \phi ||_{\infty}\sup_{r\leq t}X^n_r(1)
    \end{aligned}
    \]
    Observing (4.7) yields the result.
\end{proof}

Before showing C-tightness of the terms $Z^n(\phi), N^n(\phi)$, and $A^n(\phi)$, we first note that the process
\[
\frac{1}{n}\sum_{s_n<t_n}\sum_{\alpha \sim_n s_n}\phi(B^{n,\alpha}_{s_n+a_n})(N^{n,\alpha}-1)
\]
is the branching process representing the total mass of the measure-valued process defined in (4.1), with a uniformly bounded spatial term multiplying each generation. As such the proof of C-tightness of the branching process (see Proposition 2.2) carries over, and we immediately obtain 
\begin{lemma}
    The process $\{\frac{1}{n}\sum_{s_n<t_n}\sum_{\alpha \sim_n s_n}\phi(B^{n,\alpha}_{s_n+a_n})(N^{n,\alpha}-1)\}\; ; \; t\geq0\}$ is C-tight.
\end{lemma}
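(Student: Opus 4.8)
The plan is to follow the proof of Proposition 2.2 essentially verbatim, inserting the bounded weights $\phi(B^{n,\alpha}_{s_n+a_n})$ and checking that they disturb none of the inputs. Writing
\[
G^n_t(\phi):=\frac{1}{n}\sum_{s_n<t_n}\sum_{\alpha\sim_n s_n}\phi(B^{n,\alpha}_{s_n+a_n})(N^{n,\alpha}-1),
\]
I would first record two structural facts. First, $G^n(\phi)$ is linear in $\phi$ and $G^n_t(1)=\widetilde{M}^n(t_n)-1$, so the constant-weight case is exactly the total mass process of Proposition 2.2; in particular the a priori bound (4.7) is available throughout, which will give compact containment. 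Second, since $\phi\in\mathcal{C}^2_b(\mathbb{\dot{R}}^d)$ is bounded and, by construction, the spatial motions $B^{n,\alpha}$ are independent of the branching variables $N^{n,\alpha}$ given the environment, each generation contributes a sum of conditionally independent increments whose $k$th conditional moment is that of the unweighted branching process multiplied by a factor at most $\|\phi\|_\infty^k$.

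With this in hand, the three conditions of Theorem 2.13 in [3] verified in Proposition 2.2 transfer directly. Conditioning on $\{\beta^{(n)},\mathcal{F}^n_{s_n}\}$, the conditional-mean (drift) sum is controlled by $\|\phi\|_\infty$ times $\sum_i\ln m^n_i$, which converges to $-W$ by (1.13) as before; the conditional-variance sum is bounded by $\|\phi\|_\infty^2$ times $\tfrac1n\sum_i\mathbb{E}[(N^{n,\alpha}-m^n_i)^2\mid\beta^{(n)}_i]$, which converges as in display (2.5); and the third-moment Lyapunov sum is bounded by $\|\phi\|_\infty^3$ times the quantity shown to vanish in the verification of condition 3. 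The part of the drift coming from the environment fluctuations and from the $O(D_n^{-2})$ correction to $m^n_i$ is handled precisely as in Lemma 3.4, using assumptions 2) and 3) in (1.24) together with Remark 1.3 to discard higher-order terms.

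To convert these moment inputs into C-tightness I would decompose each generational increment into its conditional mean plus a conditionally centred martingale difference, exactly as in the passage from (4.4) to (4.6). The martingale part is then dispatched by bounding its predictable bracket by a constant times $\|\phi\|_\infty^2\,n^{-2}\sum_{s_n<t_n}\sum_{\alpha\sim_n s_n}\mathbb{E}[(N^{n,\alpha}-m^n_i)^2\mid\mathcal{F}^n_{s_n}]$ and applying Lenglart's inequality as in Lemma 4.1, while the predictable drift has finite variation with total variation controlled by the convergent sums above. Since the limiting drift is continuous (being driven by the continuous potential $W$, by assumption 4) in (1.24)) and the martingale bracket is continuous, every limit point is continuous, which upgrades tightness to C-tightness.

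The main obstacle is conceptual rather than computational: the weights $\phi(B^{n,\alpha})$ are signed and position-dependent, so $G^n(\phi)$ is not itself a branching process and Theorem 2.13 in [3] cannot be invoked to identify its scaling limit. The resolution is that C-tightness needs only the three moment conditions, not the precise form of the limit, and these survive the insertion of a bounded, branching-independent weight with the explicit factors $\|\phi\|_\infty^k$ recorded above. One must also take care not to use the crude pointwise bound $|\phi(B)(N-1)|\le\|\phi\|_\infty(N+1)$ directly, as it destroys the cancellation between offspring and parent that keeps $G^n(\phi)$ of order one; the centred decomposition above is exactly what preserves this cancellation.
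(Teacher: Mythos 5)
Your route is genuinely different from the paper's, and it contains a gap at its central step. The paper proves this lemma in one stroke: the process is the total\-/mass branching process of Proposition 2.2 with a uniformly bounded weight attached to each generation, so the Kurtz\-/conditions argument of Proposition 2.2 carries over intact; C-tightness of $Z^n(\phi)$ is then \emph{deduced} from this lemma (Corollary 4.1). You invert that logic: you build the lemma out of the decomposition of (4.6), proving C-tightness of the martingale and drift pieces separately. That inversion could in principle work, but your treatment of the martingale piece fails. You propose to dispatch it by bounding its bracket by $\|\phi\|_\infty^2\, n^{-2}\sum_{s_n<t_n}\sum_{\alpha\sim_n s_n}\mathbb{E}[(N^{n,\alpha}-m^n_i)^2\mid\mathcal{F}^n_{s_n}]$ and ``applying Lenglart's inequality as in Lemma 4.1''. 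In Lemma 4.1 that argument works only because the bracket there \emph{vanishes}: each particle's spatial increment over a step has variance $O(1/n)$ (It\^o isometry over an interval of length $1/n$), giving a bracket of order $1/n$. Here each particle's branching increment has conditional variance $\beta^{(n)}_i/(1-\beta^{(n)}_i)^2\approx 2$, there are about $nX^n_{s_n}(1)$ particles per generation and about $nt$ generations, so your bound is of order $2\|\phi\|_\infty^2\int_0^t X^n_s(1)\,ds$ --- an order-one quantity, not $o(1)$. Indeed the martingale part of your decomposition is $Z^n(\phi)+N^n(\phi)$; $N^n(\phi)$ is negligible (Lemma 4.4), but $Z^n(\phi)$ carries the entire limiting fluctuation: its bracket converges to $2\int_0^t X_s(\phi^2)\,ds$ (Lemma 4.8), the quadratic variation in Theorem 1.2. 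If your Lenglart step were valid it would prove $Z^n(\phi)\Rightarrow 0$, contradicting the martingale problem itself. To close this step you would need C-tightness of the brackets (the computation of Lemma 4.8, which uses (4.7) and the a.s.\ convergence of $\frac1n\sum\mathbb{E}[\beta^{(n)}/(1-\beta^{(n)})^2\mid\mathcal{F}^n]$) together with a genuine martingale tightness criterion --- Aldous's criterion with Lenglart applied to increments after stopping times, or Rebolledo's theorem, plus control of the jumps --- none of which appears in your sketch.

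There is a second, smaller error in the drift piece. You claim it ``has finite variation with total variation controlled by the convergent sums above''. Its total variation is of order $\|\phi\|_\infty\sup_s X^n_s(1)\cdot\frac{1}{D_n}\sum_{i\leq nt}\bigl|\ln\bigl((1-\beta_i)/\beta_i\bigr)\bigr|$, which grows like $n/D_n\sim n^{1-H}\to\infty$ since $H<1$; what converges is only the \emph{signed} partial-sum process, which is $\approx -W^n\Rightarrow -W$ by (1.13). C-tightness of the drift must therefore be extracted from this cancellation, which is exactly what the paper's Lemma 4.5 does; a variation bound cannot do it. In short, your structural observations (bounded weights, $G^n_t(1)=\widetilde{M}^n(t_n)-1$, independence of motion and branching) agree with the paper's identification, and your instinct that Kurtz's theorem cannot simply be ``invoked'' on the weighted process is a fair criticism of the paper's brevity, but the replacement argument you offer does not prove the lemma: its martingale step is false as stated, and its drift step appeals to the wrong notion of control.
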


In view of Lemma 4.3, C-tightness of $Z^n(\phi)$ will follow from C-tightness of $N^n(\phi)$ and $A^n(\phi).$

\begin{lemma}
    For any $t>0,$ $\sup_{s\leq t}|N^n_s(\phi)|\stackrel{\mathbb{P}}\to0$.
\end{lemma}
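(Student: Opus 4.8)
The plan is to exploit the fact that $N^n(\phi)$ is an $\mathcal{F}^n_t$-martingale (as observed just after (4.6)) and reduce, via Lenglart's inequality exactly as in the proof of Lemma 4.1, to showing that its predictable quadratic variation tends to $0$ in probability. Writing the martingale increment over the step ending at $s_n+a_n$ (with $i=\lfloor ns\rfloor$) in the form
\[
\Delta N^n_{s_n}(\phi)=\Big(\tfrac1n\sum_{\alpha\sim_n s_n}\phi(B^{n,\alpha}_{s_n+a_n})\Big)\,\xi^n_{i+1},\qquad \xi^n_{i+1}:=\mathbb{E}[N^{n,\alpha}\mid\beta^{(n)},\mathcal{F}^n_{s_n}]-\mathbb{E}[N^{n,\alpha}\mid\mathcal{F}^n_{s_n}],
\]
the crucial structural point is that the environmental fluctuation $\xi^n_{i+1}=m^n_{i+1}-\mathbb{E}[m^n_{i+1}\mid\mathcal{F}^n_{s_n}]$ is common to every lineage $\alpha\sim_n s_n$, since all particles of generation $i$ reproduce with the same law $\mathrm{Geom}(1-\beta^{(n)}_{i+1})$, where $m^n_{i+1}=\beta^{(n)}_{i+1}/(1-\beta^{(n)}_{i+1})$ as in (2.4).

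I would then bound the conditional second moment of each increment. Because $\phi(\Lambda)=0$, only lineages alive at $s_n$ contribute, so $|\tfrac1n\sum_{\alpha\sim_n s_n}\phi(B^{n,\alpha}_{s_n+a_n})|\le\|\phi\|_\infty X^n_{s_n}(1)$, a bound which is $\mathcal{F}^n_{s_n}$-measurable. For the fluctuation factor I use assumption 1) in (1.12): since $\beta_i\in(v,1-v)$ the quantity $\ln\frac{1-\beta_i}{\beta_i}$ is bounded, so from the definition (1.14) and the expansion (3.1) one obtains $\sup_i|m^n_i-1|\le C/D_n$ uniformly, whence $|\xi^n_{i+1}|\le C/D_n$ almost surely. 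Combining these and using that the displayed prefactor is $\mathcal{F}^n_{s_n}$-measurable gives
\[
\mathbb{E}\big[(\Delta N^n_{s_n}(\phi))^2\mid\mathcal{F}^n_{s_n}\big]\le \|\phi\|_\infty^2\,(X^n_{s_n}(1))^2\,\mathbb{E}\big[(\xi^n_{i+1})^2\mid\mathcal{F}^n_{s_n}\big]\le \frac{C\|\phi\|_\infty^2}{D_n^2}\,(X^n_{s_n}(1))^2 .
\]
Summing the $\lfloor nt\rfloor$ increments yields
\[
\langle N^n(\phi)\rangle_{t_n}=\sum_{s_n<t_n}\mathbb{E}\big[(\Delta N^n_{s_n}(\phi))^2\mid\mathcal{F}^n_{s_n}\big]\le \frac{C\,\lfloor nt\rfloor\,\|\phi\|_\infty^2}{D_n^2}\,\sup_{s\le t}(X^n_s(1))^2 .
\]

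To conclude, I would let $n\to\infty$. By (3.2) we have $D_n^2\sim n^{2H}L(n)$ with $L$ slowly varying, and assumption 1) in (1.24) forces $H>\tfrac12$, so $n/D_n^2\sim n^{1-2H}/L(n)\to0$. Since $\sup_{s\le t}(X^n_s(1))^2$ is tight by (4.7), the product above tends to $0$ in probability, i.e. $\langle N^n(\phi)\rangle_{t_n}\stackrel{\mathbb{P}}{\to}0$. Lenglart's inequality then gives, for arbitrary $\epsilon,\delta>0$, $\mathbb{P}(\sup_{s\le t}|N^n_s(\phi)|>\epsilon)\le\mathbb{P}(\langle N^n(\phi)\rangle_{t_n}\ge\delta)+\delta/\epsilon^2$; sending $n\to\infty$ and then $\delta\to0$ finishes the proof.

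The only genuine subtlety is the uniform estimate $|\xi^n_{i+1}|\le C/D_n$, for which the boundedness of the environment and the expansion (3.1) are essential, together with the limit $n/D_n^2\to0$. This last point is the conceptual heart of the lemma: it is precisely the strong positive correlation (yielding $H>\tfrac12$, i.e. $D_n\gg\sqrt n$) that makes the environment-fluctuation martingale vanish in the limit, in contrast to the weakly dependent regime of [7] and [8] where $H=\tfrac12$ and the analogous term survives, producing the extra covariance contribution to the quadratic variation appearing in (1.33).
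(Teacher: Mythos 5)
Your proof is correct and follows essentially the same route as the paper's: both exploit that the environmental fluctuation $m^n_{i+1}-\mathbb{E}[m^n_{i+1}\mid\mathcal{F}^n_{s_n}]$ is common to all lineages (Remark 4.1), bound it uniformly by $C/D_n$ via (1.38) and the boundedness of the environment, pull out $\|\phi\|_\infty^2\sup_{s}X^n_s(1)^2$ using (4.7), and conclude from $n/D_n^2\to 0$ (i.e. $H>\tfrac12$) together with Lenglart's inequality. Your closing remark contrasting this with the $H=\tfrac12$ regime of [7] and [8] matches the discussion the paper gives at the end of Section 4.
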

\begin{proof}
    The proof is similar to that of Lemma 4.1.
    First note 
    \[
    \begin{aligned}
    &\langle N^n(\phi)\rangle _{t_n}= \\
    &\frac{1}{n^2}\sum_{s_n< t_n}\mathbb{E}\left[\left(\sum_{\alpha \sim_n s_n}\phi(B^{n,\alpha}_{s_n+a_n})\left(\mathbb{E}[N^{n,\alpha}|\beta^{(n)},\mathcal{F}^n_{s_n}]-\mathbb{E}[N^{n,\alpha}|\mathcal{F}^n_{s_n}]\right)\right)^2\Bigg|\mathcal{F}^n_{s_n}\right] \\
    &\leq||\phi^2||_{\infty}\sup_{s_n<t_n}X^n_{s_n}(1)^2\sum_{s_n< t_n}\mathbb{E}\left[\left(\frac{\beta^{(n)}_{\lfloor sn \rfloor +1}}{1-\beta^{(n)}_{\lfloor sn \rfloor +1}}-\mathbb{E}\left[\frac{\beta^{(n)}_{\lfloor sn \rfloor +1}}{1-\beta^{(n)}_{\lfloor sn \rfloor +1}}\Bigg|\mathcal{F}^n_{s_n}\right]\right)^2\Bigg|\mathcal{F}^n_{s_n}\right]
    \end{aligned}
    \]
    where we used Remark 4.1 to eliminate the dependence on $\alpha$ when moving from the second to the third line.
    
    We now show 
    \[
    \sum_{s_n< t_n}\mathbb{E}\left[\left(\frac{\beta^{(n)}_{\lfloor sn \rfloor +1}}{1-\beta^{(n)}_{\lfloor sn \rfloor +1}}-\mathbb{E}\left[\frac{\beta^{(n)}_{\lfloor sn \rfloor +1}}{1-\beta^{(n)}_{\lfloor sn \rfloor +1}}|\mathcal{F}^n_{s_n}\right]\right)^2\Bigg|\mathcal{F}^n_{s_n}\right] \to 0
    \]
    a.s.
    
    By the formula (1.38) and the uniform-in-$i$ boundedness of the $\beta_i$ (see assumption 1 in (1.12)), we have
    \[
    \begin{aligned}
    \Bigg|\frac{\beta^{(n)}_{\lfloor sn \rfloor+1}}{1-\beta^{(n)}_{\lfloor sn \rfloor +1}}-\mathbb{E}\left[\frac{\beta^{(n)}_{\lfloor sn \rfloor+1}}{1-\beta^{(n)}_{\lfloor sn \rfloor+1}}\Bigg|\mathcal{F}^n_{s_n}\right]\Bigg|
    \leq \frac{C}{D_n}+o\left(\frac{1}{D_n^2}\right),
    \end{aligned}
    \]
    for some universal constant C. As such,
    \[
    \sum_{s_n< t_n}\mathbb{E}\left[\left(\frac{\beta^{(n)}_{\lfloor sn \rfloor +1}}{1-\beta^{(n)}_{\lfloor sn \rfloor +1}}-\mathbb{E}\left[\frac{\beta^{(n)}_{\lfloor sn \rfloor +1}}{1-\beta^{(n)}_{\lfloor sn \rfloor +1}}|\mathcal{F}^n_{s_n}\right]\right)^2\Bigg|\mathcal{F}^n_{s_n}\right] \leq tn\left(\frac{C}{D_n}\right)^2\to 0
    \]
    due to the fact that $D_n^2\sim n^{2H}L(n)$ for some $\frac{1}{2}<H<1$ (see (3.2)). The result follows by the continuous mapping theorem and the same argument involving Lenglart's inequality as in Lemma 4.1.
\end{proof}

\begin{remark}
    From this proof it also follows that the sequence of $\mathcal{F}^n_t$ martingales
    \[
    \left\{\frac{1}{n}\sum_{s_n<t_n}\sum_{\alpha\sim_n s_n}\mathbb{E}\left[N^{n,\alpha}|\beta^{(n)},\mathcal{F}^n_{s_n}\right]-\mathbb{E}[N^{n,\alpha}|\mathcal{F}^n_{s_n}] \; ; \; t\geq0 \right\}_{n\geq1}
    \]
    converge weakly to the 0 process.
\end{remark}

\begin{lemma}
    The sequence $\{A^n_t(\phi):t\geq0\}_{n\geq1}$ is C-tight.
\end{lemma}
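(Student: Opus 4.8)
The plan is to show that, up to terms that vanish uniformly on compacts in probability, $A^n_t(\phi)$ coincides with a Riemann--Stieltjes integral of the mass against the discrete potential $W^n$ of (2.1), and then to establish $C$-tightness of that integral. Writing $\ell_i:=\ln\!\left(\frac{1-\beta_i}{\beta_i}\right)$ and recalling $m^n_i=(\Phi_{\beta^{(n)}_i})'(1)$, the expansion (1.38) gives $m^n_{i+1}-1=-\frac{1}{D_n}\ell_{i+1}+\frac{1}{2D_n^2}\ell_{i+1}^2+o(D_n^{-2})$, so $\mathbb{E}[N^{n,\alpha}\mid\mathcal{F}^n_{s_n}]-1=\mathbb{E}[m^n_{i+1}\mid\mathcal{F}^n_{i/n}]-1$ splits into a linear part $-\frac{1}{D_n}\mathbb{E}[\ell_{i+1}\mid\mathcal{F}^n_{i/n}]$ and a remainder. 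First I would dispose of the remainder: its contribution to $A^n_t(\phi)$ is bounded in absolute value by $\frac{C}{D_n^2}\sum_{i<nt}X^n_{i/n}(1)\le\frac{C\,nT}{D_n^2}\sup_{s\le T}X^n_s(1)$, which tends to $0$ in probability uniformly on $[0,T]$ since $n/D_n^2\sim n^{1-2H}/L(n)\to0$ by (3.2) with $\tfrac12<H<1$, using the tightness of $\sup_{s\le T}X^n_s(1)$ from (4.7) together with assumptions (1.24)(2),(3) and Remark 1.3 to control the $o(D_n^{-2})$ error. The inner spatial weight $\frac1n\sum_{\alpha\sim_n s_n}\phi(B^{n,\alpha}_{s_n+a_n})$ would be replaced by $X^n_{s_n}(\phi)$, the error being controlled by the uniform boundedness and continuity of $\phi$ and the displacement estimate of Lemma 3.7.

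Next I would replace the compensator $\mathbb{E}[\ell_{i+1}\mid\mathcal{F}^n_{i/n}]$ by $\ell_{i+1}$. The difference $-\frac{1}{D_n}\sum_{i<nt}X^n_{i/n}(\phi)\bigl(\ell_{i+1}-\mathbb{E}[\ell_{i+1}\mid\mathcal{F}^n_{i/n}]\bigr)$ is, to leading order, exactly the environment-fluctuation martingale $N^n_t(\phi)$ of (4.6), which converges uniformly to $0$ by Lemma 4.4 and Remark 4.2. After these reductions one is left with $A^n_t(\phi)=-\sum_{i<nt}X^n_{i/n}(\phi)\bigl(W^n_{(i+1)/n}-W^n_{i/n}\bigr)+o_{\mathbb{P}}(1)=-\int_0^{t}X^n_{s}(\phi)\,dW^n(s)+o_{\mathbb{P}}(1)$, the integral of the mass against $W^n$, which converges weakly to the continuous potential $W$.

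It remains to prove $C$-tightness of $\{\int_0^{\cdot}X^n_s(\phi)\,dW^n(s)\}_{n\ge1}$, and this is the main obstacle. The naive bound by $\sup_s|X^n_s(\phi)|$ times the total variation of $W^n$ is useless, since the total variation $\frac{1}{D_n}\sum_{i}|\ell_{i+1}|\sim n^{1-H}$ diverges; the cancellation carried by the increments of $W^n$---precisely what makes $W^n$ converge to a continuous limit---must be retained throughout, and similarly integration by parts fails because $X^n(\phi)$ itself has diverging total variation. I would instead verify the increment condition $\lim_{\delta\to0}\limsup_{n}\mathbb{P}\bigl(\sup_{|t-s|\le\delta,\,s,t\le T}|A^n_t(\phi)-A^n_s(\phi)|>\epsilon\bigr)=0$, together with compact containment, through a second-moment estimate. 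Conditioning on the environment $\beta^{(n)}$ makes the increments of $W^n$ deterministic, and over a window $I$ of length $\delta$ the conditional second moment equals $\frac{1}{D_n^2}\sum_{i,j\in I}\ell_{i+1}\ell_{j+1}\,\mathbb{E}[X^n_{i/n}(\phi)X^n_{j/n}(\phi)\mid\beta^{(n)}]$. Using a fixed-environment second-moment bound for the BBMRE (the analogue of Lemma 3.5) one controls the branching covariances, which over the short window are essentially constant, so the signed double sum collapses to a multiple of $\bigl(\sum_{i\in I}\ell_{i+1}\bigr)^2$; taking expectations and invoking assumption (1.24)(2) with (3.2), which give $\mathrm{Var}\bigl(\sum_{i\in I}\ell_{i}\bigr)\sim D_{\lfloor n\delta\rfloor}^2\sim\delta^{2H}D_n^2$, bounds this by a quantity of order $\delta^{2H}$, small as $\delta\to0$ and finite over $[0,T]$.

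The delicate point---and the crux of the whole argument---is handling the correlation between the mass $X^n(\phi)$ and the environment when passing from the conditional back to the unconditional second moment: one must exploit the positive correlation (1.24)(1) to keep the estimate an upper bound \emph{without} ever replacing the $\ell_{i+1}$ by their absolute values, since any such replacement destroys the cancellation and yields a divergent bound of order $(n\delta/D_n)^2\sim n^{2-2H}\delta^2$. Once this modulus estimate and the corresponding bound over $[0,T]$ are in hand, $C$-tightness follows from the standard criterion for finite-variation processes, the continuity of any limit point being automatic from the $\delta^{2H}$ control.
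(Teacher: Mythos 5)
Your two opening paragraphs of reductions are sound, and they use the same ingredients as the paper's own proof: the expansion (1.38), disposal of the $O(D_n^{-2})$ terms via $n/D_n^2\sim n^{1-2H}/L(n)\to 0$ from (3.2) together with (4.7), and the martingale of Lemma 4.4/Remark 4.2 to swap conditional expectations for the quantities themselves. The genuine gap is in your third paragraph, which is where the tightness proof is actually supposed to happen. The step in which $\frac{1}{D_n^2}\sum_{i,j\in I}\ell_{i+1}\ell_{j+1}\,\mathbb{E}\bigl[X^n_{i/n}(\phi)X^n_{j/n}(\phi)\mid\beta^{(n)}\bigr]$ (with $\ell_i=\ln((1-\beta_i)/\beta_i)$, as in your notation) ``collapses to a multiple of $\bigl(\sum_{i\in I}\ell_{i+1}\bigr)^2$'' is asserted, not proved: conditionally on the environment, the mass covariances are themselves functionals of the $\ell_k$ (they involve products of the means $m^n_k=e^{-\ell_k/D_n}$ and the conditional variances), so on taking the outer expectation you face mixed moments of the form $\mathbb{E}\bigl[\ell_i\ell_j\,F(\ell_1,\dots,\ell_j)\bigr]$, whose sign and size are not controlled by pairwise positive correlation of the $\beta_i$ (assumption 1 of (1.24)). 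You flag exactly this as ``the crux,'' but naming the obstacle is not the same as overcoming it; as written, the argument is incomplete precisely at its central step. Two further problems: the estimate $\mathrm{Var}\bigl(\sum_{i\in I}\ell_i\bigr)\sim D^2_{\lfloor n\delta\rfloor}\sim\delta^{2H}D_n^2$ for an arbitrary window $I\subset[0,nT]$ tacitly assumes stationarity of $(\beta_i)$, which is not among the hypotheses (1.12), (1.13), (1.24); and what you are attempting — control of the discrete integral $\int_0^\cdot X^n_s(\phi)\,dW^n(s)$ — is essentially the question of when $\gamma_t(\phi)$ is an integral process, which the paper deliberately leaves open except in the semimartingale case (end of Section 4, via [4]).

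For comparison, the paper's proof sidesteps second moments altogether. By Remark 4.1 the factor $\mathbb{E}[N^{n,\alpha}\mid\mathcal{F}^n_{r_n}]-1$ is the same for every $\alpha\sim_n r_n$, so the increment of $A^n(\phi)$ is a sum of terms $\bigl(\frac1n\sum_{\alpha\sim_n r_n}\phi(B^{n,\alpha}_{r_n+a_n})\bigr)\,\Delta E^n_{r_n}$, where $E^n_t:=\sum_{s_n<t_n}\bigl(\mathbb{E}[m^n_{\lfloor sn\rfloor+1}\mid\mathcal{F}^n_{s_n}]-1\bigr)$ is purely environmental; the paper bounds this by $\|\phi\|_\infty\,\sup_r X^n_r(1)\,\bigl|E^n_t-E^n_s\bigr|$ and reduces, via (4.7) and the continuous mapping theorem, to C-tightness of $E^n$, which holds because $E^n\Rightarrow -W$ with $W$ continuous (this is where (1.38), Remark 4.2 and (3.2) are used). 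In other words, in the paper all the cancellation is carried by $E^n$ alone, and the mass enters only through its running supremum; no decoupling of mass from environment inside a second moment is ever required. Your instinct that factorizing a signed sum in this way is delicate is a fair one, but if you want to address that point the natural repair is a summation-by-parts or martingale argument built around the paper's reduction to $E^n$, not the unconditional second-moment computation you propose, which as it stands does not close.
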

\begin{proof}
    By Remark 4.1 we have 
    \[
    |A^n_t(\phi)-A^n_s(\phi)|\leq ||\phi||_{\infty}\sup_{s_n<r_n\leq t_n}X^n_{r_n}(1)\Bigg|\sum_{s_n<r_n\leq t_n}\mathbb{E}\left[\frac{\beta^{(n)}_{\lfloor rn \rfloor +1}}{1-\beta^{(n)}_{\lfloor rn \rfloor +1}}\Bigg|\mathcal{F}^n_{r_n}\right]-1\Bigg|
    \]
    and so, by (4.7) and the continuous mapping theorem, C-tightness of $\{A^n_t(\phi)\; ; \; t\geq0\}_{n\geq1}$ will follow from C-tightness of the sequence
    \[
    \left\{\sum_{s_n<t_n}\mathbb{E}\left[\frac{\beta^{(n)}_{\lfloor sn \rfloor +1}}{1-\beta^{(n)}_{\lfloor sn \rfloor +1}}\Bigg|\mathcal{F}^n_{s_n}\right]-1\; ; \; t\geq0\right\}_{n\geq1}
    \]
    We will first show C-tightness of the sequence
    \[
    \left\{\sum_{s_n<t_n}\frac{\beta^{(n)}_{\lfloor sn \rfloor+1}}{{1-\beta^{(n)}_{\lfloor sn \rfloor+1}}}-1\; ; \; t\geq0\right\}_{n\geq1}
    \]
    C-tightness of $\left\{\sum_{s_n<t_n}\mathbb{E}\left[\frac{\beta^{(n)}_{\lfloor sn \rfloor +1}}{1-\beta^{(n)}_{\lfloor sn \rfloor +1}}\Bigg|\mathcal{F}^n_{s_n}\right]-1\; ; \; t\geq0\right\}_{n\geq1}$ then follows from this, together with C-tightness of 
    \[
    \left\{\sum_{s_n<t_n}\frac{\beta^{(n)}_{\lfloor sn \rfloor+1}}{{1-\beta^{(n)}_{\lfloor sn \rfloor+1}}}-\mathbb{E}\left[\frac{\beta^{(n)}_{\lfloor sn \rfloor+1}}{{1-\beta^{(n)}_{\lfloor sn \rfloor+1}}}\Bigg|\mathcal{F}^n_{s_n}\right]\; ; \; t\geq0\right\}_{n\geq1},
    \]
    which was shown in Remark 4.2 to be a consequence of Lemma 4.4.
    
    Now, again by (1.38), we have 
    \[
    \sum_{s_n<t_n}\frac{\beta^{(n)}_{\lfloor sn \rfloor+1}}{{1-\beta^{(n)}_{\lfloor sn \rfloor+1}}}-1=\sum_{i=1}^{\lfloor nt \rfloor}-\frac{1}{D_n}\ln\left(\frac{1-\beta_i}{\beta_i}\right) + \frac{1}{2D_n^2}\left(\ln\left(\frac{1-\beta_i}{\beta_i}\right)\right)^2+o\left(\frac{1}{D_n^2}\right).
    \]
    From the above expression we see the convergence 
    \[
    \left\{\sum_{s_n<t_n}\frac{\beta^{(n)}_{\lfloor sn \rfloor+1}}{{1-\beta^{(n)}_{\lfloor sn \rfloor+1}}}-1\; ; \; t\geq 0 \right\} \Rightarrow \left\{-W_t\; ; \; t\geq0 \right\}.
    \]
    This follows from assumption (1.13), that $D_n^2\sim n^{2H}L(n)$ for some $\frac{1}{2}<H<1$ (see 3.2), and the fact that for all $i$, $\beta_i \in (v,1-v)$ for some $v\in (0,\frac{1}{2})$.
\end{proof}

\begin{corollary}
    The sequence $\{Z^n_t(\phi):t\geq0 \}$ is C-tight.
\end{corollary}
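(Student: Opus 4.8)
The plan is to read off $Z^n(\phi)$ from the remaining terms in the semimartingale decomposition (4.6), all of which have already been controlled, using the three-way splitting of $N^{n,\alpha}-1$ recorded just before (4.6). First I would substitute
\[
N^{n,\alpha}-1=\left(N^{n,\alpha}-\mathbb{E}[N^{n,\alpha}|\beta^{(n)},\mathcal{F}^n_{s_n}]\right)+\left(\mathbb{E}[N^{n,\alpha}|\beta^{(n)},\mathcal{F}^n_{s_n}]-\mathbb{E}[N^{n,\alpha}|\mathcal{F}^n_{s_n}]\right)+\left(\mathbb{E}[N^{n,\alpha}|\mathcal{F}^n_{s_n}]-1\right)
\]
into the process $G^n_t(\phi):=\frac{1}{n}\sum_{s_n<t_n}\sum_{\alpha\sim_n s_n}\phi(B^{n,\alpha}_{s_n+a_n})(N^{n,\alpha}-1)$ of Lemma 4.3, and compare the resulting three sums with the definitions of $Z^n(\phi)$, $N^n(\phi)$ and $A^n(\phi)$ in (4.7). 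This produces the exact algebraic identity
\[
G^n_t(\phi)=Z^n_t(\phi)+N^n_t(\phi)+A^n_t(\phi).
\]

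Next I would solve this for $Z^n(\phi)$, writing $Z^n_t(\phi)=G^n_t(\phi)-N^n_t(\phi)-A^n_t(\phi)$, and invoke the estimates already in hand for each summand. The process $G^n(\phi)$ is C-tight by Lemma 4.3, and $A^n(\phi)$ is C-tight by Lemma 4.5. For $N^n(\phi)$, Lemma 4.4 gives $\sup_{s\leq t}|N^n_s(\phi)|\stackrel{\mathbb{P}}{\to}0$, so $\{N^n(\phi)\}_{n\geq1}$ converges to the zero process and is in particular C-tight.

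It then remains only to note that C-tightness is stable under finite linear combinations. I would justify this through the uniform modulus of continuity $w_T(x,\delta)=\sup_{|s-t|\leq\delta,\,s,t\leq T}|x_s-x_t|$: subadditivity gives $w_T(Z^n,\delta)\leq w_T(G^n,\delta)+w_T(N^n,\delta)+w_T(A^n,\delta)$, and since each of $G^n$, $N^n$, $A^n$ satisfies $\lim_{\delta\to0}\limsup_{n\to\infty}\mathbb{P}(w_T(\cdot,\delta)>\epsilon)=0$ for every $T,\epsilon>0$ and all three start at $0$, a union bound transfers both the modulus bound and the tightness of initial values to $Z^n(\phi)$; this is precisely C-tightness. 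I expect no genuine obstacle here: all of the analytic work has been carried out in Lemmas 4.3--4.5, and the corollary is the purely algebraic step of reassembling those results.
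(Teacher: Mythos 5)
Your proposal is correct and is exactly the argument the paper intends: the identity $G^n(\phi)=Z^n(\phi)+N^n(\phi)+A^n(\phi)$ is the paper's remark ``In view of Lemma 4.3, C-tightness of $Z^n(\phi)$ will follow from C-tightness of $N^n(\phi)$ and $A^n(\phi)$,'' and the corollary is then deduced from Lemmas 4.3, 4.4 and 4.5 just as you do, with your modulus-of-continuity justification merely spelling out the (standard) stability of C-tightness under finite sums. The only nit is a reference slip: the definitions of $Z^n(\phi)$, $N^n(\phi)$, $A^n(\phi)$ are in (4.6), not (4.7).
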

\begin{proof}
    Immediate from Lemmas 4.4 and 4.5.
\end{proof}

\begin{proposition}
    The sequence $\{X^n\}_{n\geq 1}$ is C-tight in $D_{\mathcal{M}(\mathbb{\dot{R}}^d)}[0,\infty)$.
\end{proposition}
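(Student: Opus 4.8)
The plan is to deduce C-tightness of the measure-valued processes from the C-tightness of the real-valued coordinate processes $\{X^n(\phi)\}_{n\geq1}$, which the preceding lemmas have already established term-by-term through the semimartingale decomposition (4.6). There are two stages. First I would assemble the decomposition to conclude that $\{X^n(\phi)\}_{n\geq1}$ is C-tight in $D[0,\infty)$ for every $\phi\in\mathcal{C}^2_b(\mathbb{\dot{R}}^d)$. Second, I would lift this coordinate-wise statement to C-tightness of $\{X^n\}_{n\geq1}$ in $D_{\mathcal{M}(\mathbb{\dot{R}}^d)}[0,\infty)$ by invoking a tightness criterion for measure-valued processes, the key additional input being the uniform mass bound (4.7).

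For the first stage, fix $\phi\in\mathcal{C}^2_b(\mathbb{\dot{R}}^d)$ and recall
\[
X^n_t(\phi)=X^n_0(\phi)+\tfrac{1}{2}\int_0^tX^n_s(\Delta\phi)\,ds+Z^n_t(\phi)+N^n_t(\phi)+M^n_t(\phi)+A^n_t(\phi).
\]
The initial term $X^n_0(\phi)=\phi(0)$ is a deterministic constant; the integral term is C-tight by Lemma 4.2; $M^n(\phi)$ and $N^n(\phi)$ converge in probability to $0$ uniformly on compacts by Lemmas 4.1 and 4.4, so are in particular C-tight; $A^n(\phi)$ is C-tight by Lemma 4.5; and $Z^n(\phi)$ is C-tight by Corollary 4.1. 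Since C-tightness is preserved under finite sums — the limit points being continuous, the Skorohod topology restricted to their paths coincides with local uniform convergence, on which addition is continuous — the sum $X^n(\phi)$ is C-tight.

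For the second stage, the decisive gain from working in the one-point compactification is that $\mathbb{\dot{R}}^d$ is compact, so for each $k$ the set of measures of total mass at most $k$ is a compact subset of $\mathcal{M}(\mathbb{\dot{R}}^d)$; the bound (4.7) then furnishes the required compact containment condition. Taking the separating, addition-closed family of continuous maps $\mu\mapsto\mu(\phi)$, $\phi\in\mathcal{C}^2_b(\mathbb{\dot{R}}^d)$ (which is measure-determining on the compact space $\mathbb{\dot{R}}^d$), Jakubowski's tightness criterion for measure-valued processes upgrades the coordinate-wise tightness of stage one to tightness of $\{X^n\}_{n\geq1}$ in $D_{\mathcal{M}(\mathbb{\dot{R}}^d)}[0,\infty)$. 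Finally, any limit point $X$ has $X(\phi)$ as a limit point of $X^n(\phi)$ and hence a.s. continuous paths for each fixed $\phi$; applying this simultaneously along a countable measure-determining subfamily (available since $C(\mathbb{\dot{R}}^d)$ is separable) shows $t\mapsto X_t$ is a.s. continuous, which promotes tightness to C-tightness.

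I expect the only genuinely delicate point to be the verification of the hypotheses of the measure-valued tightness criterion, and within that, the compact containment condition — which is precisely where compactifying to $\mathbb{\dot{R}}^d$ and the uniform mass estimate (4.7) are indispensable. The passage from C-tightness of each coordinate to continuity of the measure-valued limit is then routine.
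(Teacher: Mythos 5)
Your proposal is correct and takes essentially the same route as the paper: the paper's own proof is precisely your first stage, namely citing Lemmas 4.1, 4.2, 4.4, 4.5 and Corollary 4.1 to get C-tightness of every term in the decomposition (4.6), hence of $\{X^n(\phi)\}_{n\geq1}$ for each $\phi\in\mathcal{C}^2_b(\mathbb{\dot{R}}^d)$. Your second stage (Jakubowski's criterion, with compact containment supplied by (4.7) and the compactness of $\mathbb{\dot{R}}^d$) simply fills in the reduction to coordinate-wise C-tightness that the paper asserts without proof at the start of Section 4.
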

\begin{proof}
    Immediate from Lemmas 4.1, 4.2, 4.4, 4.5, and Corollary 4.1.
\end{proof}

We now need to identify any limiting point as lying in the original space $D_{\mathcal{M}(\mathbb{{R}}^d)}[0,K]$ with the relevant continuity properties.

\begin{corollary}
    The sequence $\{X^n\}_{n\geq 1}$ is tight in $D_{\mathcal{M}(\mathbb{{R}}^d)}[0,K]$ and any limit point is continuous on $[0,K)$.
\end{corollary}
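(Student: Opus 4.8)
The plan is to upgrade the C-tightness in the compactified space $D_{\mathcal{M}(\mathbb{\dot{R}}^d)}[0,\infty)$ obtained in Proposition 4.1 to tightness in $D_{\mathcal{M}(\mathbb{R}^d)}[0,K]$. Since $X^n_t=0$ for $t\geq K$, restricting the time interval to $[0,K]$ is harmless, and the only genuine obstruction to moving from $\mathbb{\dot{R}}^d$ to $\mathbb{R}^d$ is mass escaping to spatial infinity in the limit. The heart of the argument is therefore a compact containment estimate: for every $\epsilon>0$ I want to produce $R>0$ with
\[
\limsup_{n\to\infty}\mathbb{P}\Big(\sup_{t\leq K}X^n_t(B_R^c)>\epsilon\Big)<\epsilon,
\]
where $B_R$ is the open ball of radius $R$ about the origin. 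Granting this, the standard compactification criterion for measure-valued processes (the argument used in [7]) shows that $\{X^n\}$ is tight in $D_{\mathcal{M}(\mathbb{R}^d)}[0,K]$ and that no limit point charges the point at infinity. Continuity on $[0,K)$ is then inherited from the continuity of the limit points in $D_{\mathcal{M}(\mathbb{\dot{R}}^d)}[0,\infty)$ furnished by Proposition 4.1, and vanishing at $K$ is immediate from $X^n_K=0$ together with this continuity.

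The seed estimate for compact containment is the mean-measure bound of Lemma 3.5, applied with the bounded measurable function $f=1_{B_R^c}$ and the initial condition $X^n_0=\delta_0$. This gives, for $\delta\leq K$,
\[
\mathbb{E}\big[X^n_{\lfloor n\delta\rfloor/n}(1_{B_R^c})\big]\leq (P_\delta 1_{B_R^c})(0)\,(1+C(\delta))=\mathbb{P}\big(\sqrt{\delta}\,|Z|>R\big)\,(1+C(\delta)),
\]
with $Z$ a standard Gaussian; since $C(\cdot)$ is increasing the right-hand side is bounded by $\rho(R):=\mathbb{P}(\sqrt{K}\,|Z|>R)(1+C(K))$, which is independent of both $n$ and $\delta$ and tends to $0$ as $R\to\infty$. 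Thus the expected mass outside $B_R$ is uniformly small at every time, which I would feed into a maximal argument to obtain the supremum-in-time bound.

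To promote this pointwise-in-time control to the uniform-in-time estimate I would insert a smooth cutoff $\psi_R\in\mathcal{C}^2_b(\mathbb{\dot{R}}^d)$ with $1_{B_{2R}^c}\leq\psi_R\leq 1_{B_R^c}$ and $\|\Delta\psi_R\|_\infty\leq CR^{-2}$ into the semimartingale decomposition (4.6). Then $X^n_0(\psi_R)=0$; the Laplacian drift $\tfrac12\int_0^t X^n_s(\Delta\psi_R)\,ds$ is dominated by $CR^{-2}\int_0^K X^n_s(1)\,ds$, which is uniformly small by the total-mass control (4.7); the martingales $M^n(\psi_R)$ and $N^n(\psi_R)$ are uniformly negligible by Lemmas 4.1 and 4.4; and the branching martingale $Z^n(\psi_R)$ is handled by Doob's inequality together with $\mathbb{E}[\langle Z^n(\psi_R)\rangle_K]\leq 2\int_0^K\mathbb{E}[X^n_s(\psi_R^2)]\,ds\leq 2K\rho(R)$, using $\psi_R^2\leq 1_{B_R^c}$ and the seed estimate, so that $\mathbb{E}[\sup_{t\leq K}Z^n_t(\psi_R)^2]\to 0$ as $R\to\infty$ uniformly in $n$.

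The main obstacle I expect is the aggregate drift term $A^n(\psi_R)$, since its crude bound would involve the total variation $\sum_{i\leq nK}D_n^{-1}|\ln((1-\beta_i)/\beta_i)|$, which diverges and so destroys the cancellation that makes $A^n$ tractable in Lemma 4.5. Here I would split $A^n(\psi_R)$ using the expansion (1.38): the second-order contribution is nonnegative and bounded by $\sup_{s\leq K}X^n_s(1_{B_R^c})\cdot\tfrac{1}{2D_n^2}\sum_{i\leq nK}(\ln((1-\beta_i)/\beta_i))^2$, which vanishes because $n/D_n^2\sim n^{1-2H}\to 0$ for the $H\in(\tfrac12,1)$ of (3.2), while the first-order contribution is the genuine environmental drift whose increments are weighted by $X^n_{s_n}(\psi_R)\leq X^n_{s_n}(1_{B_R^c})$; combining this weighting with the a priori mean bound $\rho(R)$ in a Gronwall-type self-consistency argument should yield that $\sup_{t\leq K}|A^n_t(\psi_R)|$ is uniformly small for large $R$. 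Assembling the four contributions through a union bound and Markov's and Doob's inequalities then delivers the compact containment estimate and completes the proof.
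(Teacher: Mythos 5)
Your reduction to a compact containment estimate is a reasonable strategy in principle, and your seed estimate via Lemma 3.5 with $f=1_{B_R^c}$ is fine, but the proposal has a genuine gap exactly where you flag the ``main obstacle'': the aggregate drift term $A^n(\psi_R)$. Your plan to control it by a ``Gronwall-type self-consistency argument'' does not go through under the paper's hypotheses. The first-order contribution is, in effect, a discrete integral $\sum_{s_n<t_n}X^n_{s_n}(\psi_R)\,\Delta W^n_{s_n}$ in which the increments $\Delta W^n$ have diverging total variation (of order $n/D_n\sim n^{1-H}$), carry no martingale structure (the environment is positively correlated with long-range dependence, so conditional centering is unavailable --- that cancellation lives in $N^n$, not in $A^n$), and converge to a limit $W$ that is only assumed continuous. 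Smallness of the weights $\sup_s X^n_s(\psi_R)$ in probability does not make such a sum small: you would need something like a Young/sewing estimate, which requires $\alpha$-H\"older control of $t\mapsto X^n_t(\psi_R)$ uniformly in $n$ together with $\beta$-H\"older control of $W^n$ with $\alpha+\beta>1$; neither is established anywhere (C-tightness gives no quantitative H\"older bound), and for a general continuous $W$ satisfying (1.24) no such integration theory need exist at all. Indeed the paper itself remarks, at the end of Section 4, that the limit $\gamma_t(\phi)$ of $A^n_t(\phi)$ is a stochastic integral only in special cases. A Gronwall inequality also has nothing to bite on here, since the ``integrator'' is not of uniformly bounded variation and the quantity you want to bound does not appear linearly against an integrable kernel.

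The paper sidesteps this difficulty entirely by using the snake representation rather than the semimartingale decomposition: by Theorem 1.3 (proved independently of Corollary 4.2), along a Skorohod coupling one has $\widetilde{X}^n_t(\widetilde{\phi}_k)\to\int_0^{T^0_1}\widetilde{\phi}_k(\widehat{\mathbb{W}}_s)L^t(ds)$ for compactly supported cutoffs $\widetilde{\phi}_k\uparrow 1_{\mathbb{R}^d}$, while $X^n_t(1_{\dot{\mathbb{R}}^d})\to X_t(1_{\dot{\mathbb{R}}^d})$; monotone convergence then identifies $X_t(1_{\mathbb{R}^d})=L^t_{T^0_1}=X_t(1_{\dot{\mathbb{R}}^d})$, so no mass charges $\{\infty\}$, and continuity on $[0,K)$ is inherited from the C-tightness of Proposition 4.1 together with the continuity of $t\mapsto L^t_{T^0_1}$ supplied by Proposition 2.2. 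In other words, the escape-of-mass problem is resolved by an exact identification of the total mass of any limit point, not by a quantitative uniform-in-time drift estimate. If you want to salvage your route, you would have to either prove uniform H\"older regularity of $X^n(\psi_R)$ and impose H\"older regularity on $W$ (strictly stronger assumptions than (1.24)), or find a summation-by-parts argument controlling the increments of $X^n(\psi_R)$ in total variation, which is not available.
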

\begin{proof}
    Let $X$ be a limit point of the sequence $\{X^n\}_{n\geq 1}$ in $D_{\mathcal{M}(\mathbb{\dot{R}}^d)}[0,\infty)$, and let $\{\widetilde{X}^n\}_{n\geq1}$ be the restriction of the sequence $\{X^n\}_{n\geq1}$ to the space $\mathbb{R}^d$. We also set each $\widetilde{X}^n_K=0$. Since each $X^n$ is concentrated on $\mathbb{R}^d$, $\{\widetilde{X}^n\}_{n\geq1}$ is the sequence of measure-valued processes defined by the discrete Brownian snakes in (1.31), or equivalently by definition (1.23). By Skorohod's representation theorem, we may switch to a probability space where the convergences $X^n \to X$ in $D_{\mathcal{M}(\mathbb{\dot{R}}^d)}[0,\infty)$ (Proposition 4.1), and $\widetilde{X}^n_t(\phi) \to \int_0^{T^0_1}\phi(\mathbb{\widehat{W}}_s)L^t(ds)$ for $\phi \in \mathcal{B}(\mathbb{R}^d)$ (Theorem 1.3), hold almost surely.
    
    We know that $\widetilde{X}^n(1_{\mathbb{R}^d}){=} X^n(1_{\mathbb{\dot{R}}^d})$ almost surely on $[0,K)$. This follows as these two branching processes have identical starting mass and offspring distributions on this time interval. Using Theorem 1.3, for any $0\leq t < K$,
    \[
    X_t(1_{\mathbb{\dot{R}}^d}) {=} \int_0^{T^0_1}L^t(ds).
    \]
    Now let $\widetilde{\phi}_k \in \mathcal{C}^2_b(\mathbb{{R}}^d)$ be an increasing sequence of compactly supported functions vanishing at infinity such that $\widetilde{\phi}_k \to 1_{\mathbb{R}^d}$. Extend each $\widetilde{\phi}_k$ to $\phi_k$ defined on $\mathbb{\dot{R}}^d$ by setting $\phi_k(\infty)=0$. Note then $\phi_k\in \mathcal{C}^2_b(\mathbb{\dot{R}}^d)$ and $\phi_k \to 1_{\mathbb{R}^d}$ also. 
    
    Since each $X^n_t$ is concentrated on $\mathbb{R}^d$, by Theorem~1.3 and Proposition 4.1,
    \[
    \begin{aligned}
    X_t(\phi_k)&=\lim_{n\to \infty}X^n_t(\phi_k) \\
    &=\lim_{n\to \infty}\widetilde{X}^n_t(\widetilde{\phi}_k) \\
    &=\int_0^{T^0_1}\widetilde{\phi}_k(\mathbb{\widehat{W}}_s)L^t(ds)
    \end{aligned}
    \]
    By monotone convergence,
    \begin{equation}
    X_t(1_{\mathbb{R}^d}){=}\int_0^{T^0_1}L^t(ds).
    \end{equation}
    Hence $X_t(1_{\mathbb{\dot{R}}^d})=X_t(1_{\mathbb{R}^d})$ and $X_t(1_{\infty})=0$ almost surely. We may extend this last equality to hold almost surely on a countable dense set in $[0,K]$.
    
    Now, the map $t\mapsto X_t(1_{\mathbb{\dot{R}}^d})$ is continuous, since $1_{\mathbb{\dot{R}}^d}\in\mathcal{C}^2_b(\mathbb{\dot{R}}^d)$. Moreover, by (4.11), 
    \[
    X_t(1_{\mathbb{R}^d})=L^t([0,T^0_1])=L^t_{T^0_1}
    \]
    and hence is the weak limit of a sequence of branching processes in the random environments $(\beta^{(n)}_i)_{i\geq1}$. This can be seen from the definition of $L$ and by the same argument as in Proposition 2.2. Hence, by the result in Proposition 2.2, $t\mapsto L^t_{T^0_1}$ is continuous and so $t\mapsto X_t(1_{\mathbb{R}^d})$ is continuous. This yields $X_t(1_{\infty})=0$ a.s. on $[0,K]$.

    Continuity of $X$ on $[0,K)$ follows by C-tightness of the sequence $\{X^n\}_{n\geq1}$ in $D_{\mathcal{M}(\mathbb{\dot{R}}^d)}[0,\infty)$.
\end{proof}

All that remains is the derivation of the martingale problem. In view of Lemmas 4.1 and 4.4, we would like to show any limiting point of the sequence $\{Z^n(\phi)\}_{n\geq1}$ is a martingale and calculate it's quadratic variation.

\begin{lemma}
For any $t>0$, the sequence $\{Z^n_t(\phi)\}_{n\geq1}$ is uniformly bounded in $L_2$.
\end{lemma}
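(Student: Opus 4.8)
The plan is to compute the second moment of $Z^n_t(\phi)$ directly through its predictable quadratic variation, exactly as in the proof of Lemma 4.1. Since the sum defining $Z^n_t(\phi)$ in (4.6) ranges only over the complete generations $s_n<t_n$, the quantity $Z^n_t(\phi)$ depends on $t$ only through $t_n=\lfloor nt\rfloor/n$, so it suffices to bound $\mathbb{E}[Z^n_{t_n}(\phi)^2]$ uniformly in $n$. As $Z^n(\phi)$ is a mean-zero $\mathcal{F}^n_t$-martingale whose increments are the martingale differences indexed by $s_n<t_n$, the cross terms vanish in expectation and
\[
\mathbb{E}[Z^n_{t_n}(\phi)^2]=\mathbb{E}[\langle Z^n(\phi)\rangle_{t_n}]=\frac{1}{n^2}\sum_{s_n<t_n}\mathbb{E}\!\left[\Big(\sum_{\alpha\sim_n s_n}\phi(B^{n,\alpha}_{s_n+a_n})\big(N^{n,\alpha}-\mathbb{E}[N^{n,\alpha}|\beta^{(n)},\mathcal{F}^n_{s_n}]\big)\Big)^{2}\right].
\]

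First I would collapse the inner square. Conditioning on $\mathcal{F}^n_{s_n}$, on the environment $\beta^{(n)}$, and on the spatial motion over $[s_n,s_n+a_n]$, the offspring numbers $\{N^{n,\alpha}\}_{\alpha\sim_n s_n}$ are conditionally independent, each with conditional mean $\mathbb{E}[N^{n,\alpha}|\beta^{(n)},\mathcal{F}^n_{s_n}]$, and are independent of the now-measurable weights $\phi(B^{n,\alpha}_{s_n+a_n})$; hence all cross terms in $\alpha$ vanish. Using the conditional variance of the geometric offspring law recorded in (2.4), this leaves
\[
\mathbb{E}[\langle Z^n(\phi)\rangle_{t_n}]=\frac{1}{n^2}\sum_{s_n<t_n}\mathbb{E}\!\left[\sum_{\alpha\sim_n s_n}\phi(B^{n,\alpha}_{s_n+a_n})^{2}\,\frac{\beta^{(n)}_{\lfloor sn\rfloor+1}}{\big(1-\beta^{(n)}_{\lfloor sn\rfloor+1}\big)^{2}}\right].
\]

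Next I would bound this sum. Since every $\beta_i\in(v,1-v)$, the rescaled parameters $\beta^{(n)}_i$ are bounded away from $0$ and $1$ uniformly in both $i$ and $n$ (the uniform-in-$i$ boundedness exploited in the proof of Proposition 2.2), so the variance factor is at most some constant $C_0$ independent of $s$ and $n$. Together with $|\phi|\le\|\phi\|_\infty$ and $\#\{\alpha\sim_n s_n\}=nX^n_{s_n}(1)$ (the contribution of cemetery particles being zero since $\phi(\Lambda)=0$), this gives
\[
\mathbb{E}[\langle Z^n(\phi)\rangle_{t_n}]\le\frac{C_0\|\phi\|_\infty^{2}}{n}\sum_{s_n<t_n}\mathbb{E}\big[X^n_{s_n}(1)\big].
\]
The total mass $X^n_{s_n}(1)$ is precisely the branching process $\widetilde{M}^n(\lfloor sn\rfloor)$, whose mean, by the computation in the proof of Lemma 3.4, equals $1+\tfrac{1}{2D_n^2}\mathrm{Var}\big(\sum_{i=1}^{\lfloor sn\rfloor}\ln(\tfrac{1-\beta_i}{\beta_i})\big)$. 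Under the positive-correlation hypothesis 1) in (1.24) this variance is non-decreasing in the number of summands, so $\mathbb{E}[X^n_{s_n}(1)]\le\mathbb{E}[\widetilde{M}^n(\lfloor tn\rfloor)]\le 1+C(t)$ for all $s\le t$ and all large $n$ by Lemma 3.4, while the finitely many remaining $n$ contribute individually finite means. Since the sum has at most $nt$ terms, I obtain $\mathbb{E}[Z^n_{t_n}(\phi)^2]\le C_0\|\phi\|_\infty^2\,t\,(1+C(t))$, a bound independent of $n$, which is the assertion.

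The step requiring the most care is the collapse of the square: one must condition in the correct order — first on $\beta^{(n)}$ and the spatial increments, which renders the weights $\phi(B^{n,\alpha}_{s_n+a_n})$ measurable and the offspring numbers conditionally independent — so that the cross terms genuinely vanish and only the conditional variances survive. The second delicate point is the uniform-in-$s$ control of $\mathbb{E}[X^n_{s_n}(1)]$, where the monotonicity furnished by positive correlation together with Lemma 3.4 is exactly what closes the argument.
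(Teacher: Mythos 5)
Your proof is correct and follows essentially the same route as the paper's: collapse the square via conditional independence of the offspring variables, bound the geometric variance factor $\beta^{(n)}/(1-\beta^{(n)})^2$ by a uniform constant using the boundedness of the $\beta_i$, and control $\sum_{s_n<t_n}\mathbb{E}[X^n_{s_n}(1)]$ via Lemma 3.4. The only (cosmetic) difference is that where the paper simply takes $\sup_{s\le t}C(s)\le C<\infty$, you justify the uniformity in $s$ by invoking monotonicity of the variance under the positive-correlation assumption 1) in (1.24) — a slightly more careful treatment of the same step.
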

\begin{proof}
    Using the conditional independence of the random variables $N^{n,\alpha}$ across distinct $\alpha$, we see
    \[
    \begin{aligned}
    \mathbb{E}[Z^n_t(\phi)^2]&\leq \frac{||\phi^2||_{\infty}}{n}\sum_{s_n<t_n}\mathbb{E}\left[X^n_{s_n}(1)\left(\frac{\beta^{(n)}_{\lfloor sn \rfloor +1}}{(1-\beta^{(n)}_{\lfloor sn \rfloor +1})^2}\right)\right] \\
    &\leq \frac{||\phi^2|_{\infty}v_{max}}{n}\sum_{s_n<t_n}\mathbb{E}[X^n_{s_n}(1)]
    \end{aligned}
    \]
    where $\frac{\beta^{(n)}_{i}}{(1-\beta^{(n)}_{i})^2}\leq v_{max}$ for all $i$ independently of $n$ by the uniform boundedness of the $\beta_i$, and hence the $\beta^{(n)}_i$. Applying Lemma 3.4 we see
    \[
    \mathbb{E}[Z^n_t(\phi)^2]\leq \frac{||\phi^2|_{\infty}v_{max}}{n}\sum_{s_n<t_n}1+C(s),
    \]
   for some suitable constant $C(s)$. Since $t$ is fixed we may let $\sup_{s\leq t}C(s)\leq C< \infty$ for some universal constant $C$. The result follows.
\end{proof}

\begin{corollary}
    Let $Z(\phi)$ be a limiting point of the sequence $\{Z^n(\phi)\}_{n\geq 1}$. Then $Z(\phi)$ is a martingale. 
\end{corollary}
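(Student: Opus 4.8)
The plan is to pass the $\mathcal{F}^n$-martingale property of each $Z^n(\phi)$ to the limit, using the C-tightness established in Corollary 4.1 together with the uniform $L_2$-bound from Lemma 4.6. Fix a subsequence along which $Z^n(\phi)\Rightarrow Z(\phi)$ in $D[0,\infty)$; by C-tightness the limit $Z(\phi)$ has continuous paths almost surely. To show that $Z(\phi)$ is a martingale with respect to the filtration it generates, it suffices to verify, for every $0\leq s<t$, every $k\geq1$, every choice of times $0\leq s_1<\cdots<s_k\leq s$, and every bounded continuous $G:\mathbb{R}^k\to\mathbb{R}$, that
\[
\mathbb{E}\big[(Z_t(\phi)-Z_s(\phi))\,G(Z_{s_1}(\phi),\ldots,Z_{s_k}(\phi))\big]=0.
\]

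First I would record the corresponding pre-limit identity. Since each $Z^n(\phi)$ is an $\mathcal{F}^n_t$-martingale and $G(Z^n_{s_1}(\phi),\ldots,Z^n_{s_k}(\phi))$ is $\mathcal{F}^n_s$-measurable, conditioning on $\mathcal{F}^n_s$ gives
\[
\mathbb{E}\big[(Z^n_t(\phi)-Z^n_s(\phi))\,G(Z^n_{s_1}(\phi),\ldots,Z^n_{s_k}(\phi))\big]=0
\]
for every $n$. The functional $x\mapsto (x(t)-x(s))\,G(x(s_1),\ldots,x(s_k))$ from $D[0,\infty)$ to $\mathbb{R}$ is continuous at every path that is continuous at the finitely many times $s_1,\ldots,s_k,s,t$, because evaluation maps are continuous in the Skorohod topology at paths continuous at the evaluation point. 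Since $Z(\phi)$ is almost surely continuous, this functional is continuous at $Z(\phi)$-almost every path, so by the continuous mapping theorem the random variables $(Z^n_t(\phi)-Z^n_s(\phi))\,G(\cdots)$ converge in distribution to $(Z_t(\phi)-Z_s(\phi))\,G(\cdots)$.

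To conclude I would upgrade this convergence in distribution to convergence of expectations. Because $G$ is bounded, it suffices to control $\{Z^n_t(\phi)\}_n$ and $\{Z^n_s(\phi)\}_n$, and here Lemma 4.6 is exactly what is needed: both sequences are bounded in $L_2$, hence uniformly integrable, so the products $(Z^n_t(\phi)-Z^n_s(\phi))\,G(\cdots)$ form a uniformly integrable family. Uniform integrability together with convergence in distribution yields convergence of the expectations, so the pre-limit identity passes to the limit and the displayed expectation vanishes. Since $s,t,k,(s_i)$ and $G$ were arbitrary, $Z(\phi)$ is a martingale.

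The main obstacle is precisely this passage from weak convergence to convergence of expectations: the test functional $(x(t)-x(s))\,G(\cdots)$ is unbounded, and weak convergence alone controls only bounded continuous functionals, so some form of uniform integrability is indispensable. This is exactly the role played by Lemma 4.6, and its $L_2$-bound is the crux of the argument. A secondary technical point is to ensure the chosen times are continuity points of the limit, which is guaranteed by the continuity of $Z(\phi)$ furnished by C-tightness.
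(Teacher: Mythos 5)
Your proof is correct and is exactly the argument the paper has in mind: the paper's proof is simply ``Immediate from Lemma 4.6,'' which compresses the standard fact that a weak limit of martingales whose marginals are uniformly bounded in $L_2$ (hence uniformly integrable) is itself a martingale. You have just written out in full the details the paper leaves implicit, including the correct use of C-tightness to handle continuity points for the evaluation functionals.
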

\begin{proof}
    Immediate from Lemma 4.6.
\end{proof}

\begin{lemma}
    The sequence $\{\langle Z^n(\phi)\rangle\}_{n\geq1}$ is C-tight and any limiting point has the representation $\langle Z(\phi)\rangle _t=2\int_0^tX_s(\phi^2)ds$.
\end{lemma}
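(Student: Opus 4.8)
The plan is to compute the predictable quadratic variation of the discrete martingale $Z^n(\phi)$ explicitly, reduce it to $2\int_0^t X^n_s(\phi^2)\,ds$ up to a negligible error, and then pass to the limit along a convergent subsequence. Proceeding exactly as in the computation of $\langle M^n(\phi)\rangle$ in Lemma 4.1, I would first write
\[
\langle Z^n(\phi)\rangle_{t_n}=\frac{1}{n^2}\sum_{s_n<t_n}\mathbb{E}\left[\left(\sum_{\alpha\sim_n s_n}\phi(B^{n,\alpha}_{s_n+a_n})\big(N^{n,\alpha}-\mathbb{E}[N^{n,\alpha}|\beta^{(n)},\mathcal{F}^n_{s_n}]\big)\right)^2\Bigg|\mathcal{F}^n_{s_n}\right].
\]
Since, given $\mathcal{F}^n_{s_n}$ and $\beta^{(n)}$, the offspring variables $\{N^{n,\alpha}\}_{\alpha\sim_n s_n}$ are independent, centred, and independent of the spatial motion, every cross term with $\alpha\neq\alpha'$ vanishes. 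Using the conditional variance $\mathbb{E}[(N^{n,\alpha}-m^n)^2|\beta^{(n)}]=\frac{\beta^{(n)}_{\lfloor sn\rfloor+1}}{(1-\beta^{(n)}_{\lfloor sn\rfloor+1})^2}$ from (2.4), the independence of branching and motion, and $\mathbb{E}[\phi(B^{n,\alpha}_{s_n+a_n})^2|\mathcal{F}^n_{s_n}]=P_{a_n}(\phi^2)(B^{n,\alpha}_{s_n})$ for the heat semigroup $P_{a_n}$, only the diagonal survives and
\[
\langle Z^n(\phi)\rangle_{t_n}=\frac{1}{n^2}\sum_{s_n<t_n}\frac{\beta^{(n)}_{\lfloor sn\rfloor+1}}{(1-\beta^{(n)}_{\lfloor sn\rfloor+1})^2}\sum_{\alpha\sim_n s_n}P_{a_n}(\phi^2)(B^{n,\alpha}_{s_n}).
\]

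Next I would invoke the uniform-in-$i$ convergence $\frac{\beta^{(n)}_i}{(1-\beta^{(n)}_i)^2}\to 2$, which follows from $\beta^{(n)}_i\to\frac12$ uniformly as established in the proof of Proposition 2.2, together with $\|P_{a_n}(\phi^2)-\phi^2\|_{\infty}=O(a_n)\to0$ (valid since $\phi^2\in\mathcal{C}^2_b$). Writing $X^n_{s_n}(g)=\frac{1}{n}\sum_{\alpha\sim_n s_n}g(B^{n,\alpha}_{s_n})$, this recasts the quadratic variation as
\[
\langle Z^n(\phi)\rangle_{t_n}=\frac{2}{n}\sum_{s_n<t_n}X^n_{s_n}(\phi^2)+\mathcal{E}^n_t,
\]
where $\mathcal{E}^n_t$ collects the two replacement errors, each dominated by $\epsilon_n\,||\phi^2||_{\infty}\frac{1}{n}\sum_{s_n<t_n}X^n_{s_n}(1)$ with $\epsilon_n\to0$ deterministically. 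By the uniform mass control (4.7) and the continuous mapping theorem, $\sup_{t\leq T}|\mathcal{E}^n_t|\stackrel{\mathbb{P}}{\to}0$, so $\langle Z^n(\phi)\rangle$ agrees with the leading step-function integral $\frac{2}{n}\sum_{s_n<t_n}X^n_{s_n}(\phi^2)$ up to a term vanishing in probability uniformly on compacts.

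For C-tightness I would exploit that $t\mapsto\langle Z^n(\phi)\rangle_t$ is non-decreasing, so it suffices to control increments. Bounding $\frac{\beta^{(n)}_i}{(1-\beta^{(n)}_i)^2}\leq v_{max}$ and $\phi^2$ by $||\phi^2||_{\infty}$ gives, exactly as in Lemma 4.2,
\[
|\langle Z^n(\phi)\rangle_t-\langle Z^n(\phi)\rangle_s|\leq ||\phi^2||_{\infty}\,v_{max}\left(|t-s|+\tfrac1n\right)\sup_{r\leq t}X^n_r(1),
\]
whence C-tightness follows from (4.7) by the same Arzelà–Ascoli-type argument. To identify a limiting point I would pass to a subsequence along which $X^n\to X$ jointly with $\langle Z^n(\phi)\rangle$ converging; since by Corollary 4.2 the limit $X$ is continuous, Skorohod convergence upgrades to uniform convergence on compacts, so $\frac{1}{n}\sum_{s_n<t_n}X^n_{s_n}(\phi^2)\to\int_0^t X_s(\phi^2)\,ds$, giving the stated representation $2\int_0^t X_s(\phi^2)\,ds$ for the limit of $\langle Z^n(\phi)\rangle$.

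The main obstacle is the final identification of this limit with $\langle Z(\phi)\rangle$, the quadratic variation of the limiting martingale $Z(\phi)$ of Corollary 4.2. This requires the convergences of $Z^n(\phi)$, of its predictable quadratic variation, and of $X^n$ to hold jointly, and then passing the martingale property of $Z^n(\phi)^2-\langle Z^n(\phi)\rangle$ to the limit; the uniform integrability needed for this is supplied by the $L^2$-boundedness of Lemma 4.6, after which uniqueness of the Doob–Meyer decomposition forces $\langle Z(\phi)\rangle_t=2\int_0^t X_s(\phi^2)\,ds$.
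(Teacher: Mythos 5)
Your proposal is correct and takes essentially the same approach as the paper's proof: compute the predictable bracket of $Z^n(\phi)$ via the conditional independence of the $\{N^{n,\alpha}\}$ given the environment, use independence of branching and motion to produce the heat-semigroup term $P_{1/n}(\phi^2)$, replace the environment variance factor by its uniform limit $2$, and pass the resulting Riemann sum to $2\int_0^t X_s(\phi^2)\,ds$ using the mass bound (4.7) and continuity of the limit. Two small caveats: the bracket must carry $\mathbb{E}\bigl[\,\cdot\,\big|\mathcal{F}^n_{s_n}\bigr]$ around $\frac{\beta^{(n)}_{\lfloor sn\rfloor+1}}{(1-\beta^{(n)}_{\lfloor sn\rfloor+1})^2}$, since this variable is not $\mathcal{F}^n_{s_n}$-measurable (harmless here, as the uniform bound and limit are deterministic, which is exactly how the paper writes it); and in your final Doob--Meyer identification---a step the paper leaves implicit---uniform integrability of $Z^n_t(\phi)^2$ does not follow from the $L^2$-bound of Lemma 4.6, so one should instead localize (stop when $|Z^n(\phi)|$ or the bracket exceeds a level $K$) to pass the martingale property of $Z^n(\phi)^2-\langle Z^n(\phi)\rangle$ to the limit.
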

\begin{proof}
    We will show the convergence directly. For any particle $\alpha$ born at time $s_n-a_n$ and hence reproducing at time $s_n$, let $m^n_{s_n}=\mathbb{E}[N^{n,\alpha}|\beta^{(n)}]=\frac{\beta^{(n)}_{\lfloor sn \rfloor}}{1-\beta^{(n)}_{\lfloor sn \rfloor}}$. Then we have
    \[
    \begin{aligned}
    \langle Z^n(\phi)\rangle_t&=\frac{1}{n^2}\sum_{s_n<t_n}\mathbb{E}\left[\left(\sum_{\alpha \sim_n s_n}\phi(B^{n,\alpha}_{s_n+a_n})(N^{n,\alpha}-m^{n}_{s_n+a_n})\right)^2\Bigg|\mathcal{F}^n_{s_n}\right] \\
    &=\frac{1}{n^2}\sum_{s_n<t_n}\mathbb{E}\left[\sum_{\alpha \sim_n s_n}\phi(B^{n,\alpha}_{s_n+a_n})^2(N^{n,\alpha}-m^n_{s_n+a_n})^2|\mathcal{F}^n_{s_n}\right] \\
    &=\frac{1}{n^2}\sum_{s_n<t_n}\sum_{\alpha\sim_n s_n}P_{{1}/{n}}\phi^2(B^{n,\alpha}_{s_n})\mathbb{E}\left[\frac{\beta^{(n)}_{\lfloor sn \rfloor +1}}{(1-\beta^{(n)}_{\lfloor sn \rfloor +1})^2}\Bigg|\mathcal{F}^n_{s_n}\right] \\
    &=\frac{1}{n}\sum_{s_n<t_n}X^n_{s_n}(P_{{1}/{n}}(\phi^2))\mathbb{E}\left[\frac{\beta^{(n)}_{\lfloor sn \rfloor +1}}{(1-\beta^{(n)}_{\lfloor sn \rfloor +1})^2}\Bigg|\mathcal{F}^n_{s_n}\right]
    \end{aligned}
    \]
    where $(P_t)_{t\geq0}$ is the heat semigroup. Again we used the conditional independence of the $\{N^{n,\alpha}\}_{\alpha\sim_n s_n}$ given the environment in the second equality, and the independence of the spatial motion and the environment in the third.
    
    Now, again using (4.7), it follows that
    \[
    \{\sum_{s_n<t_n}X^n_{s_n}(P_{{1}/{n}}(\phi^2))\; ; \; t\geq0\}_{n\geq1}\Rightarrow \{X_t(\phi^2)\; ; \; t\geq0 \}.
    \]
    By the uniform-in-$i$ boundedness of the $\beta^{(n)}_i$ it also follows that
    \[
    \frac{1}{n}\sum_{s_n<t_n}\mathbb{E}\left[\frac{\beta^{(n)}_{\lfloor sn \rfloor +1}}{(1-\beta^{(n)}_{\lfloor sn \rfloor +1})^2}\Bigg|\mathcal{F}^n_{s_n}\right] \to 2t
    \]
    almost surely. Hence, using the continuity of $t\mapsto X_t(\phi^2)$ and the fact that the process 
    \[
    t\mapsto \frac{1}{n}\sum_{s_n<t_n}\mathbb{E}\left[\frac{\beta^{(n)}_{\lfloor sn \rfloor +1}}{(1-\beta^{(n)}_{\lfloor sn \rfloor +1})^2}\Bigg|\mathcal{F}^n_{s_n}\right]
    \]
    is increasing, we see 
    \[
    \left\{\frac{1}{n}\sum_{s_n<t_n}X^n_{s_n}(P_{\frac{1}{n}}(\phi^2))\mathbb{E}\left[\frac{\beta^{(n)}_{\lfloor sn \rfloor +1}}{(1-\beta^{(n)}_{\lfloor sn \rfloor +1})^2}|\mathcal{F}^n_{s_n}\right]\; ; \; t\geq0 \right\} _{n\geq1}\Rightarrow \left\{2\int_0^tX_s(\phi^2)ds\; ; \; t\geq0 \right\},
    \]
    and we are done.
\end{proof}

\textbf{Proof of Theorem 1.2:}
\begin{proof}
The tightness of the sequence $\{X^n\}_{n\geq1}$ in $D_{\mathcal{M}(\mathbb{{R}}^d)}[0,K]$ and continuity property was shown in Corollary 4.2. Moreover, by Lemmas 4.1, 4.2, 4.4, 4.5, 4.6, 4.8 and Corollary 4.3 we see that any limiting point $X$ satisfies the martingale problem
\[
X_t-X_0-\frac{1}{2}\int_0^tX_s(\Delta\phi)ds-\gamma_t(\phi)
\]
is a continuous martingale on $[0,K)$ with quadratic variation
\[
2\int_0^tX_s(\phi^2)ds,
\]
where $\gamma_t(\phi)$ is a limiting point of the sequence $\{A^n_t(\phi)\; ; \; t\geq0\}_{n\geq1}.$ The theorem is proved.
\end{proof}

We conclude this section with some comments comparing the form of the martingale problem we have just proved to that stated in [8]. 

As mentioned in Section 1.5, we recall that in the case considered in [8], the environment $(\xi^n_i(x))_{i\geq 1}$ is a sequence of i.i.d random fields across $\mathbb{R}^d$, with mean $\frac{\nu}{\sqrt{n}}$ and covariance function $g(x,y)$ independent of $n$. The offspring distribution for a particle located at $x$ at time $t=\frac{i}{n}$ is geometric with parameter $\frac{1}{2}-\frac{\xi^n_i(x)}{4\sqrt{n}}$. Then, in the notation above, for a particle $\alpha$ born at the branching time $s_n$, $N^{n,\alpha}$ is independent of $\mathcal{F}^n_{s_n}$ and 
\[
\mathbb{E}[N^{n,\alpha}-1|\mathcal{F}^n_{s_n+a_n}]=1+\frac{\xi^n_i(x)}{\sqrt{n}}+\frac{(\xi^n_i(x))^2}{2n}+\text{ higher order terms.}
\]
Hence the discrete integrator term in the process $A^n_t(\phi)$ defined in (4.6) is
\[
\mathbb{E}[N^{n,\alpha}-1|\mathcal{F}^n_{s_n}]=\mathbb{E}[N^{n,\alpha}-1]=\frac{\nu}{n}+\frac{\bar{g}}{2n}+\text{higher order terms}. 
\]
where $\bar{g}=g(x,x)$. From this observation (and ignoring higher order terms that vanish in the limit), it follows that for the case considered in [8],
\[
\begin{aligned}
A^n_t(\phi)&=\frac{1}{n}\sum_{s_n<t_n}\sum_{\alpha \sim_n s_n}\phi(B^{n,\alpha}_{s_n+a_n})(\mathbb{E}[N^{n,\alpha}|\mathcal{F}^n_{s_n}]-1) \\
&=\frac{1}{n}\sum_{s_n<t_n}\sum_{\alpha \sim_n s_n}\phi(B^{n,\alpha}_{s_n+a_n})(\frac{\nu}{n}+\frac{\bar{g}}{2n}) \\
&=\frac{1}{n}\sum_{s_n<t_n}X^n_{s_n+a_n-}\left((\nu+\frac{1}{2}\bar{g})\phi\right)
\end{aligned}
\]
and so clearly 
\[
\{A^n_t(\phi)\; ; \; t\geq0\}\Rightarrow \left\{\int_0^tX_s\left((\nu+\frac{1}{2}\bar{g})\phi\right)ds\; ; \; t\geq0\right\}
\]
and the martingale problem (1.34) mentioned without proof by the authors in [8] is recovered.

In our model, it will not always be the case that the sequence $\{A^n_t(\phi)\; ; \; t\geq0\}_{n\geq1}$ converges to an integral process, as we do not require that the limit of the integrator processes, $W$, is a process against which stochastic integration is well-defined. However, as an example, in the case that $W$ is a semimartingale satisfying some natural conditions (see [4] for the details of the conditions), we shall have
\[
\{A^n_t(\phi)\; ; \; t\geq0\}\Rightarrow \left\{\int_0^tX_s(\phi)dW_s\; ; \; t\geq0\right\} 
\]
in $D[0,\infty)$, and thus the martingale problem becomes
\[
X_t-X_0-\frac{1}{2}\int_0^tX_s(\Delta\phi)ds-\int_0^tX_s(\phi)dW_s
\]
is a continuous martingale with quadratic variation
\[
2\int_0^tX_s(\phi^2)ds,
\]
a very natural generalisation of the martingale problem for the model considered in [8].

\section{An explicit example}
In this final short section we give an explicit example of a sequence $(\beta_i)_{i \in \mathbb{Z}}$ satisfying conditions (1.12), (1.13) and (1.24). The example is taken from [14].

Take a stationary sequence of Gaussian random variables $\{X_i\}_{i\geq 1}$ with $\mathbb{E}[X_i]=0, \mathbb{E}[X_i^2]=1$, and $\mathbb{E}[X_iX_{i+k}]\sim k^{2H-2}L(k)$ for some slowly varying function $L$ and $\frac{1}{2}<H<1$. Note in this case that $\sum_{k=1}^{\infty}\mathbb{E}[X_iX_{i+k}]=\infty$.

Let $G$ be a bounded function of Hermite rank 1 (see [14] Chapter 3 for the definition of Hermite rank) such that $\mathbb{E}[G(X_i)]=0, \mathbb{E}[G(X_i)^2]<\infty$. Take the sequence $(\beta_i)_{i\geq1}$ such that 
\[
\ln\left(\frac{1-\beta_i}{\beta_i}\right)=G(X_i).
\]
\begin{remark}
    From the asymptotic form of the correlation kernel of the $X_i$, we see that the sequence $(\beta_i)_{i \geq 1}$ displays long range memory.
\end{remark}
Since $G$ is bounded, we must have that the $\beta_i$ take values in $(v,1-v)$ for some $v\in (0,\frac{1}{2}).$ By the definition of $G$, the conditions (1.12) are immediately satisfied. Then, by Theorem 4.1 and Lemma 5.1 in [14], 
\[
\left\{\frac{1}{D_n}\sum_{i=1}^{\lfloor nt \rfloor}\ln\left(\frac{1-\beta_i}{\beta_i}\right)\; ; \; t\geq0\right\}\Rightarrow \{B^H_t\; ; \; t\geq0\}
\]
in $D[0,\infty)$ where $D_n^2\sim n^{2H}L(n)$, and $B^H$ is fractional Brownian motion with Hurst parameter $H$. Thus condition (1.13) is satisfied. Since fractional Brownian motion is continuous, the assumption in (1.24) can also be seen to be satisfied.
\begin{remark}
In [14] the weak convergence is shown to hold in $D[0,1]$. The extension to $D[0,\infty)$ that we state here is easily seen to be true.
\end{remark}
In this example, the process $W$ mentioned throughout this paper is therefore fractional Brownian motion with Hurst parameter $H>\frac{1}{2}$. We also assumed that the potential $W$ is conservative (see (1.20) for the definition of a conservative potential), so that the $W$-associated process defined in (1.19) exists as a random element of $C[0,\infty).$ Theorem 1.1 in [9] concerning the growth rate of certain continuous Gaussian processes implies that $\limsup_{t\to \infty}B^H_t=\infty$, from which it is immediate that $A(y):=\int_0^ye^{B^H_z}dz$ has $A(\infty)=\infty, A(-\infty)=-\infty$ and as such is bijective. Therefore $B^H$ is a conservative potential.

Hence all the conditions are satisfied by this example. We call the $B^H$-associated process Brownian motion in a fractional Brownian potential, and the resulting superprocess $X$ super-Brownian motion in a fractional Brownian potential. 

\textbf{\Large{Acknowledgements}}\newline

This paper was written as part of a summer research project under the supervision of Professor Alison Etheridge at the Department of Statistics, University of Oxford, UK. I am very grateful for all of the invaluable help and guidance she has provided during the preparation of this work. I am also indebted to Dr. Jo\~{a}o De Oliveira Madeira for his incredibly helpful comments on a first draft, and to Ruairi Garrett and Julio Ernesto Nava Trejo for many fruitful discussions in this area.\newline

\textbf{\Large{Glossary}}\newline

Here we list frequently used notation. In the second column of the table we give a brief description, and in the third we refer to the section where the notation is defined.

\begin{longtable}{c p{10cm} c}
\toprule
\textbf{Notation} & \textbf{Meaning} & \textbf{Section} \\
\midrule
\endfirsthead

\toprule
\textbf{Notation} & \textbf{Meaning} & \textbf{Section} \\
\midrule
\endhead
$(\beta_i)_{i\in \mathbb{Z}}$ & The environment for our model. & 1.2 \\
$D_n$ & The scaling constant for the discrete potential $\sum_{i=1}^{n}\ln\left(\frac{1-\beta_i}{\beta_i}\right)$. & 1.2 \\
$W$ & The weak limit of $\left\{\sum_{i=1}^{\lfloor nx \rfloor}\ln\left(\frac{1-\beta_i}{\beta_i}\right)\; ; \; x\in \mathbb{R}\right\}_{n\geq1}$. & 1.2 \\
$(\beta^{(n)}_i)_{i\in \mathbb{Z}}$ & The rescaled environments & 1.2 \\
$\widetilde{S}^n$ & The Donsker-rescaled random walk in the random environment $(\beta^{(n)}_i)_{i\in \mathbb{Z}}$. & 1.2 \\
$\widetilde{M}^n$ & The rescaled branching process under the first $n$ excursions of $\widetilde{S}^n$. & 1.2 \\
$Z$ & A potential (a real-valued process). & 1.2 \\
$A_Z$ & The scale function for a potential $Z$. & 1.2 \\
$dM_Z$ & The speed measure for a potential $Z$. & 1.2 \\
$Y$ & The $W$-associated process. & 1.2 \\
$X^n$ & a BBM/BBMRE in $\mathbb{R}^d$. The branching mechanism will always be specified. & 1.3 \\
$N^n_i$ & A random variable with distribution Geom($1-\beta^{(n)}_i$), representing the offspring distribution of $X^n$ defined in (1.23) at time $t=\frac{i}{n}$. & 1.3 \\
$\mathcal{M}(\mathbb{R}^d)$ & The non-negative, finite measures on $\mathbb{R}^d$ with the weak topology. & 1.3 \\
$\mathcal{M}_R(\mathbb{R})$ & The non-negative Radon measures on $\mathbb{R}$, with the vague topology. & 3.1 \\
$S^n$ & The Donsker-rescaled random walk in the random environment $(\beta^{(n)}_i)_{i\geq1}$, reflected at 0 and $K$. & 1.4 \\
$\mathcal{W}$ & The set of all stopped paths & 1.4 \\
$\mathbf{W}^n_{\cdot}$ & The $C_{\mathbb{R}^d}[0,\infty)$ path-valued process obtained by concatenating or erasing independent Brownian motions. & 1.4 \\
$\mathbb{W}^n_{\cdot}=(\mathbf{W}^n_{\cdot},S^n_{\cdot})$ & The Brownian snake process with path process $\mathbf{W}^n_{\cdot}$ and lifetime process $S^n_{\cdot}$. & 1.4 \\
$\mathbb{\widehat{W}}^n_{\cdot}$ & The terminal point of the snake process $\mathbb{W}^n_{\cdot}$. & 1.4 \\
$L^{n,s}_t$ & The local time of $S^n$ at level $s$ by time $t$. & 1.4 \\
$T^{n,a}_t$ & The inverse local time of $S^n$ at level $a$ by time $t$. & 1.4 \\
$W^n$ & The discrete potential function for $\widetilde{S}$. & 2 \\
$Y^n$ & The $W^n$-associated process defined by speed and scale. & 2 \\
$m^n_i$ & The mean of the offspring distribution at time $t=\frac{i}{n}$ of the BBMRE $X^n$/branching process $\widetilde{M}^n$. & 2 \\
$\eta$ & The Feller diffusion & 2 \\
$(P_t)_{t\geq0}$ & The heat semigroup & 3.1 \\
$\alpha=(\alpha_0,\alpha_1,...,\alpha_N)$ & A labeling of the first $N$ descendants of the particle $\alpha_0$ alive at time $t=0$. We take $\alpha$ to refer to the particle $\alpha_N$, which will branch at the next branching time. & 4 \\
$\alpha \sim_nt$ & A particle $\alpha$ that will branch at the next branching time after time $t$. & 4 \\
$N^{n,\alpha}$ with $\alpha \sim_nt$ & The number of offspring of the particle $\alpha$ at the next branching time after time $t$. & 4 \\
$\mathcal{F}^n_t$ & The right-continuous filtration generated by the BBMRE $X^n$, which includes the environments up to time $t$. & 4 \\
$M^{n,\alpha,s_n}_{s_n+a_n}(\phi)$ & The martingale noise due to the spatial motion of the particle $\alpha\sim_n s_n$ over its lifetime $[s_n,s_n+a_n)$. & 4 \\
$Z^{n}(\phi)$ & The martingale noise process due to the branching conditioned on the environment in the process $X^n(\phi)$, for a test function $\phi$. & 4 \\
$N^n(\phi)$ & The martingale noise process due to the environment in the process $X^n(\phi)$, for a test function $\phi$. & 4 \\
$M^n(\phi)$ & The martingale noise process due to the spatial motion of the process $X^n(\phi)$, for a test function $\phi$. & 4 \\
$A^n(\phi)$ & The aggregate drift process due to the branching conditioned on the environment and the fluctuations of the environment. & 4 \\
\bottomrule
\end{longtable}

\textbf{\Large{References}}\newline

[1] A. Etheridge (2000).\textit{ An Introduction to Superprocesses}. University Lecture Series, vol. 20, American Mathematical Society.

[2] W. Hong, H. Yang and K.Zhou (2015). Scaling limit of local time of Sinai's random walk. \textit{Front. Math. China.}\textbf{ 10,} 1313-1324.

[3] T. Kurtz (1978). Diffusion approximations for branching processes. \textit{Adv. Prob. Relat. Topics, vol. 5,} 269-292.

[4] T. Kurtz and P. Protter (1991). Weak limit theorems for stochastic integrals and stochastic differential equations. \textit{Ann. Prob. vol. 19}\textbf{ 3}, 1035-1070

[5] J. Lamperti (1962). Semi-stable stochastic processes. \textit{Trans. Amer. Math. Soc. vol. 104}, 62-78

[6] J.-F. Le Gall (1999).\textit{ Spatial branching processes, random snakes and partial differential equations.} Lectures in Mathematics ETH Zurich, Birkhauser Verlag, Basel.

[7] L. Mytnik (1996). Superprocesses in random environments.\textit{ Ann. Prob. vol. 24,}\textbf{ 4}, 1953–1978.

[8] L. Mytnik, J. Xiong and O. Zeitouni (2011). Snake representation of a superprocess in a random environment.\textit{ ALEA, Lat. Am. J. Probab. Math. Stat. vol. 8}, 335-378.

[9] S. Orey (1972). Growth rate of certain Gaussian processes.\textit{ Proceedings of the Sixth Berkeley Symposium on Mathematical Statistics and Probability, Volume II: Probability Theory}, 443-451, University of California Press.

[10]  E. Perkins (2002). Dawson–Watanabe superprocesses and measure-valued diffusions.\textit{ Lectures on Probability Theory and Statistics, SaintFlour 1999,} Lecture notes in Mathematics,\textbf{ 1781}, 132–329, Springer, Berlin.

[11] D. Revuz and M. Yor (1980).\textit{ Continuous Martingales and Brownian Motion}. Springer.

[12] L. Rogers (1984). Brownian local times and branching processes.\textit{ Séminaire de Probabilités, vol. 18}, 42-55.

[13] P. Seignourel (2000). Discrete schemes for processes in random media.\textit{ Probab. Theory Relat. Fields. vol. 118} 293-322.

[14] M. Taqqu (1975). Weak convergence to Fractional Brownian Motion and to the Rosenblatt Process.\textit{ Zeitschrift für Wahrscheinlichkeitstheorie und verwandte Gebiete (later renamed Probability Theory and Related Fields), vol. 31}, 287-302.
\end{document}